\newtheorem{theorem}{Theorem}[section]
\newtheorem{corollary}[theorem]{Corollary} 
\newtheorem{lemma}[theorem]{Lemma} 
\newtheorem{proposition}[theorem]{Proposition} 
\newtheorem{example}[theorem]{Example}
\newtheorem{remark}[theorem]{Remark}  
\newtheorem{definition}[theorem]{Definition}
\newsavebox\negcrossing
\newsavebox\poscrossing
\newsavebox\zerosmoothing
\newsavebox\onesmoothing
\newsavebox\unknot
\title{Structures in HOMFLY-PT homology}
\author{Alex Chandler}
\email{achandler@math.ucdavis.edu}
\author{Eugene Gorsky}
\email{egorskiy@math.ucdavis.edu}
\address{Department of Mathematics, University of California, Davis}
\newcommand{\CS}{\mathcal{C}}
\newcommand{\HH}{\mathrm{HH}}
\newcommand{\HHH}{\mathcal{H}}
\newcommand{\CP}{\mathscr{P}}
\newcommand{\DGR}{\textnormal{DGR}}
\newcommand{\sll}{\mathfrak{sl}}
\newcommand{\C}{\mathbb{C}}
\newcommand{\Q}{\mathbb{Q}}
\newcommand{\CY}{\mathcal{Y}}
\newcommand{\HY}{\mathrm{HY}}
\newcommand{\ad}{\mathrm{ad}}
\newcommand{\End}{\mathrm{End}}
\newcommand{\Ker}{\mathrm{Ker}}
\newcommand{\Imm}{\mathrm{Im}}
\definecolor{grayy}{rgb}{0.53, 0.23, 0.23}
\begin{document}

\maketitle

\begin{abstract}
We study the structure of triply graded Khovanov-Rozansky homology using both the data recently computed by Nakagane and Sano for knots up to 11 crossings, and the $\sll(2)$ action defined by the second author, Hogancamp and Mellit. In particular, we compute the HOMFLY-PT $S$-invariant for all knots in the dataset, and compare it to the $\sll(N)$ concordance invariants.
\end{abstract}

\section{Introduction}

In this note, we study the structure of triply graded Khovanov-Rozansky homology \cite{KR2,KhSoergel} which categorifies  HOMFLY-PT polynomial, and its interactions with $\sll(N)$ Khovanov-Rozansky homology \cite{KR1}. Although this link homology theory was defined over 15 years ago, its definition is rather involved, and the progress in computing and understanding this homology has been rather slow. 

Recently, triply graded homology was computed for all torus knots in \cite{EH,Hog,Mellit, HM}. In a different direction, Nakagane and Sano \cite{NS} wrote a program which computed triply graded homology for all knots with at most 10 crossings and most 11-crossing knots. 
These developments indicate that the triply graded homology can be used as a rich source of numerical data about knot invariants which is readily available in many examples. We will argue that for the Nakagane-Sano dataset lots of theoretical tools (such as spectral sequences) simplify dramatically, and can be computed from triply graded homology.   

The first, and the most basic problem in approaching this data is its visualization. 
We will exclusively work with {\em reduced} homology $\HHH(K)$ for knots which is known to be finite-dimensional. By a result of Rasmussen \cite{RasDiff} the reduced and unreduced triply graded homology determine each other.
Given a collection of vector spaces indexed by three gradings, one gets a complicated three-dimensional array of their dimensions. We solve this problem by introducing $\Delta$-grading (similar to $\delta$-grading in Khovanov homology), which is a linear combination of the three gradings, and presenting the homology in each $\Delta$-grading separately. This breaks the three-dimensional array into two-dimensional slices, see Figure \ref{11n_80}. In this and other figures throughout the paper we show $a$- and $q$-grading (which correspond to the variables in the HOMFLY-PT polynomial) respectively vertically and horizontally, and mark $\Delta$-grading for each slice - this determines the $t$-grading. The numbers indicate the ranks of the corresponding homology groups\footnote{Note that this is very different from \cite{DGR} and some other sources where the numbers indicate the $t$-grading.}.

\begin{figure}[ht!]
    \begin{center}
        \includegraphics[width=\textwidth]{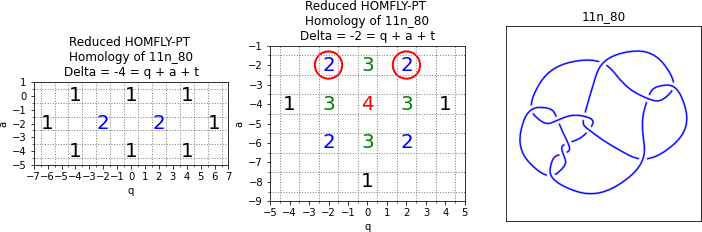}
    \end{center}
    \caption{Triply graded homology of the knot $K=11n_{80}$. The $q$ and $a$ gradings are plotted, and $t$ is determined by either $q+a+t=-4$ or $q+a+t=-2$. 
    The red circles indicate the homology of the differentials $d_1$ and $d_{-1}$, in particular, the HOMFLY-PT $S$-invariant equals 2.
    }
    \label{11n_80}
\end{figure}

Following Rasmussen \cite{RasTwoBridge}, we call a knot {\em thin}, if $\HHH(K)$ is supported in a single $\Delta$-grading. 
By the results of \cite{RasTwoBridge} all two-bridge knots are thin and   $\HHH(K)$ is determined by the HOMFLY-PT polynomial and the signature. 
It turns out that all knots in the dataset \cite{NS} are almost thin, that is, supported in at most two $\Delta$-gradings. 
More precisely, the distribution of $\Delta$-gradings is shown in Table \ref{tab: delta distribution}.
\begin{table}
\begin{center}
    \begin{tabular}{|l|c|c|}
    \hline
               & $|\Delta|=1$ & $|\Delta|=2$ \\
               \hline
    Two-bridge     &  173 & 0\\
    \hline
    Alternating, not two-bridge & 293 & 1\\
    \hline
    Not alternating & 137& 91\\
    \hline
    Total & 603 & 92\\
    \hline
    \end{tabular}
\end{center}
\caption{The only alternating knot in the dataset with $|\Delta|=2$ is $11a_{263}$, see Figure \ref{fig: 11a263}.}
\label{tab: delta distribution}
\end{table}


In this note, we argue that even if $K$ is not thin, in many cases (and for most knots in the dataset) it is possible to recover a lot of useful information about $\sll(N)$ homology and various related invariants by simply looking at $\HHH(K)$. 

Our first main result describes $S$-invariants which can be defined using a triply graded analogue of the Lee spectral sequence in Khovanov homology. 

\begin{theorem}
\label{thm: intro S}
For all knots in the dataset, the $S$-invariant is determined by $\HHH(K)$.
\end{theorem}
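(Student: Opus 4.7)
The strategy is to show that, for the knots in the dataset, the data needed to extract the $S$-invariant is already encoded in the triply graded homology $\HHH(K)$ itself. The HOMFLY-PT $S$-invariant is read off from the anticommuting differentials $d_1$ and $d_{-1}$ on $\HHH(K)$ coming from a Lee-type deformation; their common homology is one-dimensional for a knot, and $S$ records the grading of the surviving generator (as illustrated in Figure~\ref{11n_80}, where the red circles mark this generator and give $S=2$). Both $d_1$ and $d_{-1}$ carry fixed tridegrees, so their matrix coefficients are severely constrained by the support of $\HHH(K)$.

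For the thin case ($|\Delta|=1$, comprising $603$ of $695$ knots in Table~\ref{tab: delta distribution}), the argument should essentially reduce to Rasmussen's description \cite{RasTwoBridge} of thin knot homology in terms of the HOMFLY-PT polynomial and the signature. Within a single $\Delta$-slice, grading considerations force $d_{\pm 1}$ to act in a combinatorially determined way, the common homology collapses to a single generator, and the $S$-invariant is determined by the position of that generator. Thus for the thin part of the dataset the theorem should be essentially automatic once the formalism is set up.

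For the $92$ almost thin knots ($|\Delta|=2$), the plan is to argue case by case. The tridegree constraints on $d_{\pm 1}$ narrow their possible components between the two $\Delta$-slices of $\HHH(K)$ down to a small set of candidates; one then needs to show that these differentials realize the expected nonvanishing pattern, leaving a single surviving generator whose grading gives $S$. Here the $\sll(2)$ action of the second author with Hogancamp and Mellit, referenced in the abstract, should do much of the work: its raising and lowering operators live on $\HHH(K)$ in known tridegrees and pair generators across $\Delta$-slices, forcing the required nonvanishings. The main obstacle is exactly this nonvanishing step, since grading alone yields only possible matrix entries rather than actual nonzero values; my plan is to combine the $\sll(2)$ action with the explicit Nakagane--Sano data to rule out accidental vanishings in each of the $92$ cases, and then verify that the resulting surviving generator, and hence the $S$-invariant, is uniquely determined by $\HHH(K)$.
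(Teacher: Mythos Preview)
Your proposal misidentifies both the mechanism defining $S$ and the obstacle to computing it. The $S$-invariant is the $q$-degree of the surviving generator in the $\sll(1)$ \emph{spectral sequence} from $\HHH(K)$ to $\Q$ (Corollary~\ref{cor: def S}); the operator $d_{-1}$ plays no role in its definition. The first differential $d_1=d_1^{(1)}$ \emph{preserves} the $\Delta$-grading (it shifts $\Delta$ by $2-2i$ for $i=1$), so it never has ``components between the two $\Delta$-slices''. What can cross slices is the \emph{higher} differential $d_1^{(2)}$, which lowers $\Delta$ by $2$, and the entire content of the paper's proof is to show that $d_1^{(2)}$ (and all $d_1^{(i)}$, $i\ge 2$) vanish for every knot in the dataset---this is the notion of being $d_1$-standard (Proposition~\ref{prop: d1 exceptions}).

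Once $d_1$-standardness is established, no information about ranks or nonvanishing of $d_1$ is needed: since $d_1$ preserves $\Delta$ and its homology is one-dimensional, exactly one $\Delta$-slice has odd Euler characteristic and the rest have Euler characteristic zero, so $S$ is read off from the Euler characteristics alone (Lemma~\ref{lem: S determined}). Your plan to pin down matrix entries of $d_{\pm 1}$ via the $\sll(2)$ action is therefore aimed at the wrong target; in fact the $\sll(2)$ action is not used anywhere in the proof of this theorem. The actual case analysis is: a simple $a$-grading inequality (Lemma~\ref{lem: d1 standard}(b)) rules out $d_1^{(2)}$ for all but $17$ of the $92$ thick knots, four more are excluded by $q$-grading, and the remaining $13$ are handled by a parity/rank argument on the $q_{\sll(1)}$-graded pieces showing that a rank-one $d_1^{(2)}$ would force $\dim E_\infty\ge 2$.
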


For example, for the knot $11n_{80}$ in Figure \ref{11n_80} the $S$-invariant equals 2, and corresponds to the (right) red circle in $\Delta$-grading $(-2)$ and $(q,a)=(2,-2)$.

We prove Theorem \ref{thm: intro S} in Section \ref{sec: data} as Corollary \ref{cor: S determined}, the main observation is that using all three gradings allows us to conclude that there is only one nontrivial differential in the spectral sequence, and we can pin down the location of its homology. This contrasts the situation in Khovanov homology where computing $s$-invariant is highly nontrivial.  

Our next results concerns the relation between  $\HHH(K)$ and (reduced) $\sll(N)$ Khovanov-Rozansky homology $H_{\sll(N)}(K)$ for arbitrary knots. By \cite{RasDiff} there is a spectral sequence from $\HHH(K)$ to $H_{\sll(N)}(K)$. Furthermore, it follows from \cite{LL,RasDiff} that for every monic polynomial $\partial W$ of degree $N$ there exists a spectral sequence with $E_2$ page isomorphic to $H_{\sll(N)}(K)$ and one-dimensional $E_\infty$ page. In \cite{LL}
Lewark and Lobb used this construction to define a family of link invariants $s_{\partial W}$ which, in principle, depend on the choice of polynomial (see Section \ref{sec: small example}). For $N=2$ there is only one such invariant $s_2=s_{x^2-x}$ and with our conventions the Rasmussen $s$-invariant equals $s=2s_2$.
 

\begin{theorem}
\label{thm: intro sN stabilize}
For $N\ge 3$ the $\sll(N)$ homology of all knots in the dataset is isomorphic to $\HHH(K)$ up to regrading. For $\partial W=x^N-x$, the corresponding invariant $s_{x^N-x}(K)$ is determined by the $S$-invariant, which is known by Theorem \ref{thm: intro S}.
\end{theorem}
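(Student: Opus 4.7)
The plan has two parts matching the two assertions in the theorem.

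For the first assertion, the strategy is to invoke Rasmussen's spectral sequence \cite{RasDiff} from $\HHH(K)$ to $H_{\sll(N)}(K)$, whose page differential $d_N$ has a tridegree with $q$-component growing linearly in $N$ and fixed small shifts in $a$ and $t$. The first step is to translate the tridegree of $d_N$ into a shift $\delta_N$ of the $\Delta$-grading; since $\Delta$ is a linear combination of $(q,a,t)$ with a nonzero coefficient on $q$, $|\delta_N|$ grows linearly in $N$. By Table \ref{tab: delta distribution}, every knot $K$ in the dataset has $\HHH(K)$ supported in at most two consecutive $\Delta$-gradings, so any differential whose $\Delta$-shift has absolute value at least $2$ is forced to vanish. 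A direct grading computation should yield $|\delta_N| \ge 2$ precisely for $N \ge 3$, hence all page differentials vanish and $H_{\sll(N)}(K) \cong \HHH(K)$ as triply graded spaces, up to the regrading induced by the total grading of the $E_\infty$-page.

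For the second assertion, the plan is to transport the Lewark--Lobb spectral sequence \cite{LL} for $\partial W = x^N - x$ to $\HHH(K)$ along the isomorphism just proved. The polynomial $x^N - x = x(x^{N-1}-1)$ factors over $\overline{\Q}$ into $N$ distinct linear factors, so after suitable base change the corresponding deformation of $H_{\sll(N)}$ splits as a direct sum of rank-one deformations. I would match the factor $x$ and one factor from $x^{N-1}-1$ to the HOMFLY-PT differentials $d_{1}$ and $d_{-1}$ on $\HHH(K)$ used in the definition of the $S$-invariant in Theorem \ref{thm: intro S}, and argue that the remaining factors contribute trivially on the almost-thin support. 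Tracking the $q$-grading of the surviving $E_\infty$ class then produces a formula of the form $s_{x^N-x}(K) = c_N \cdot S(K)$ for an $N$-dependent constant $c_N$, which should be consistent with the normalization $s = 2 s_{x^2-x}$ recorded in the introduction.

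The main obstacle is the second part: pinning down the location and $q$-grading of the $E_\infty$-survivor for the multi-root polynomial $x^N - x$ and matching it cleanly to the survivor of the $d_1/d_{-1}$ deformation that defines $S$ requires an honest argument about canonical generators on both sides, and one must verify that the ``extra'' roots of $x^{N-1}-1$ do not contribute new information on the almost-thin pieces of $\HHH(K)$. The first part, by contrast, is essentially a combinatorial grading check once the tridegree of $d_N$ is recalled from \cite{RasDiff}.
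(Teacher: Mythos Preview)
Your first part is essentially the paper's argument, modulo two small points. First, you must check that \emph{every} page differential $d_N^{(i)}$ vanishes, not just $d_N=d_N^{(1)}$; the paper computes (Corollary~\ref{cor: dN delta}) that $d_N^{(i)}$ shifts $\Delta$ by $2(N-2)i+2$, which for $N\ge 3$ and $i\ge 1$ is at least $4$. Second, ``shift at least $2$'' is not enough: two consecutive $\Delta$-gradings differ by exactly $2$, so you need the shift to exceed $2$. Both are easy fixes.

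Your second part takes a wrong turn. The factorization of $x^N-x$ into linear factors and the attempt to match roots to $d_1$ and $d_{-1}$ is neither necessary nor correct in this setting. The operator $d_{-1}$ in the paper is defined as $[F,d_1]$ via the $\sll(2)$ action and has nothing to do with the nonzero roots of $\partial W$; in the reduced theory one fixes the root $\alpha=0$, and the other roots of $\partial W$ play no role in locating the surviving class. The paper's argument (Proposition~\ref{prop: N large}) is much more direct: write $\partial W=x^N-x$ in the form \eqref{eq: W}, so that $a_N=\cdots=a_3=0$ and $a_2=-1$. By Theorem~\ref{thm: W s invariant}(c) the spectral sequence from $H_{\sll(N)}(K)$ to $H_{x^N-x}(K)$ has first differential $a_Nd_{N-1}+\cdots+a_2d_1=-d_1$. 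Since part one gives $H_{\sll(N)}(K)\cong\HHH(K)$, this is literally the $\sll(1)$ spectral sequence of Corollary~\ref{cor: def S}, whose survivor sits at $(q,a,t)=(S,-S,-S)$. Regrading by $q_{\sll(N)}=q+Na$ yields $j_{x^N-x}=-(N-1)S$ and hence $s_{x^N-x}=-\tfrac{1}{2}S$. No splitting by roots, no $d_{-1}$, no ``extra factors'' to control.
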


For $N=2$ or for more complicated knots, the behaviour of Rasmussen spectral sequence could be more subtle, see Section \ref{sec: example}. Nevertheless, for all but two knots in the dataset the Rasmussen $s$-invariant in Khovanov homology   agrees with the HOMFLY-PT $S$-invariant up to a sign. For knots $K$ which are supported in a single $\Delta$-grading, we know that $S(K) = -s(K)$ and that $\dim(\HHH(K)) = \dim(H_{\sll(2)}(K))$.
In Table \ref{tab: S and s} we show the distributions of values of $S(K)+s(K)$ and $\dim(\HHH(K)) - \dim(H_{\sll(2)}(K))$ among the 92 knots in the dataset which are not supported in a single $\Delta$-grading. 

\begin{table}[ht!]

\begin{tabular}{|l|c|c|c|c|c|c|c|c|c|c|c|c|}
\hline
$\dim\HHH(K) - \dim\HHH_{\sll(2)}(K)$ & 0  & 2 & 6 & 8  & 10 & 14 & 16 & 22 & 24 & 26 & 32 & 34 \\ \hline
Number of Knots                    & 14 & 2 & 4 & 27 & 4  & 1  & 19 & 7  & 10 & 1  & 2  & 1  \\ \hline 
\end{tabular}

\vspace{4mm}

\begin{tabular}{|l|c|c|c|}
\hline
$S(K)+s(K)$       & -2 & 0  & 2 \\ \hline
Number of Knots & 1  & 90 & 1 \\ \hline
\end{tabular}

\vspace{4mm}

\caption{The two knots whose $S$-invariant value differs from the $s$-invariant are $10_{125}$ with $S(10_{125})+s(10_{125}) = -2$ and $11n_{82}$ with $S(11n_{82})+s(11n_{82}) = 2$.}
\label{tab: S and s}
\end{table}

The following result is more abstract, and uses the action of the Lie algebra $\sll(2)$ in triply graded homology constructed in \cite{GHM}. We denote its generators by $(E,H,F)$ with the convention that $E$ increases the $q$-grading by $4$ and $F$ decreases it by $4$. The action of $\sll(2)$ preserves the $\Delta$-grading and the $a$-grading, so it acts horizontally in each $\Delta$-grading in all figures. One immediate consequence of the existence of such action, proved in \cite{GHM}, is {\em symmetry} of $\HHH(K)$ and {\em unimodality}  in each remainder of $q$-grading modulo 4.
This is clearly visible in Figure \ref{11n_80}.

In this note, we record another consequence of the $\sll(2)$ action. Let $d_N$ be the first differential in the spectral sequence from $\HHH(K)$ to $\sll(N)$ homology.

\begin{theorem}
\label{thm: intro diff relations}
a) There exists a family of operators $d_{a|b}$ on $\HHH(K)$ satisfying the following equations:
$$
d_{N|0}=d_N,\ [E,d_{a|b}]=b\cdot d_{a+1|b-1},\ [F,d_{a|b}]=a\cdot d_{a-1|b+1},\ [H,d_{a|b}]=(a-b)\cdot d_{a|b}.
$$
The symmetry of $\HHH(K)$ exchanges $d_{a|b}$ with $d_{b|a}$.

b) For $N=1$ the operators $d_1=d_{1|0}$ and $d_{-1}=d_{0|1}$ satisfy additional relations
$$
d_{-1}^2=d_{-1}d_1+d_1d_{-1}=0.
$$
\end{theorem}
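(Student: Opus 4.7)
The strategy is to recognize, for each fixed $N$, the sought-after family $\{d_{a|b}\}_{a+b=N}$ as the standard $(N{+}1)$-dimensional irreducible representation of $\sll(2)$ sitting inside $\End(\HHH(K))$ under the adjoint action, with $d_{N|0}=d_N$ playing the role of highest weight vector.

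First I would verify that $d_N$ is a highest weight vector of weight $N$, that is, $[H,d_N]=N\cdot d_N$ and $[E,d_N]=0$. The weight identity is a grading bookkeeping: it should follow from the known $(q,a,t)$-grading shift of $d_N$ together with the compatibility of the GHM $\sll(2)$-action with the $q$-grading. The vanishing $[E,d_N]=0$ is more delicate and is in my view the main obstacle; I would try to deduce it either from a grading argument ruling out any nonzero operator sitting strictly above $d_N$ in its $\sll(2)$-orbit, or from a direct comparison of the constructions in \cite{GHM} and \cite{RasDiff}.

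Given the highest-weight property, I would define
$$d_{a|b}\ :=\ \frac{a!}{N!}\,\ad(F)^{b}(d_{N}), \qquad a+b=N,\ 0\le b\le N.$$
The identity $[F,d_{a|b}]=a\cdot d_{a-1|b+1}$ is then immediate. The weight relation $[H,d_{a|b}]=(a-b)\cdot d_{a|b}$ follows by induction on $b$ from $[H,F]=-2F$ via the Jacobi identity for $\ad$. For $[E,d_{a|b}]=b\cdot d_{a+1|b-1}$, I would induct on $b$ using $[E,F]=H$; the step produces a coefficient $(b-1)(N-b+2)+(N-2b+2)=b(N-b+1)$, and the normalization $\tfrac{a!}{N!}$ is exactly what telescopes this into the stated formula, using $N-b+1=a+1$. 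The symmetry exchanging $d_{a|b}$ with $d_{b|a}$ should correspond to the symmetry of $\HHH(K)$ conjugating the $\sll(2)$-action by swapping $E\leftrightarrow F$, which sends the highest weight vector $d_{N|0}$ to the lowest weight vector $d_{0|N}$ up to a scalar that can be absorbed into the normalization.

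For part (b), I would specialize to $N=1$. Then $d_{1}=d_{1|0}$ is the highest weight vector of a $2$-dimensional irrep, so $\ad(F)^{2}(d_{1})=0$, and $d_{-1}=d_{0|1}=\ad(F)(d_{1})$. Since $\ad(F)$ is a derivation on $\End(\HHH(K))$ and $d_{1}^{2}=0$, applying $\ad(F)$ to both sides yields
$$0 \ =\ \ad(F)(d_{1}^{2}) \ =\ \ad(F)(d_{1})\,d_{1}+d_{1}\,\ad(F)(d_{1}) \ =\ d_{-1}d_{1}+d_{1}d_{-1},$$
while applying $\ad(F)$ a second time gives $0=\ad(F)^{2}(d_{1}^{2})=2\,d_{-1}^{2}$, since the outer Leibniz terms $\ad(F)^{2}(d_{1})\,d_{1}$ and $d_{1}\,\ad(F)^{2}(d_{1})$ vanish.
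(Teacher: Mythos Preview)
Your strategy is essentially the paper's: for part (a), both place $d_N$ as a highest weight vector of weight $N$ in $\End(\HHH(K))$ under the adjoint $\sll(2)$-action and then generate $d_{a|b}$ by powers of $\ad_F$. You correctly identify $[E,d_N]=0$ as the key nontrivial input. The paper proves this separately as Theorem~\ref{thm: E diffs} by a direct chain-level comparison, which is exactly your second suggestion: since the operator $E$ is built from the dot-sliding homotopies $\xi_i$ (via \eqref{eq: u easy}), and each $\xi_i$ commutes with both $d_+$ and $d_{\partial W}$ (Lemma~\ref{lem: xi commutes with slN}), one concludes that $E$ commutes with all differentials in the Rasmussen spectral sequence. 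Your first suggestion, a purely grading-based argument, will not work on its own: nothing in the tri-grading rules out a nonzero endomorphism in the degree of $[E,d_N]$. As a side remark, your normalization $\tfrac{a!}{N!}$ is the one that reproduces the stated constants exactly; the paper records $\tfrac{1}{b!}$, which would give the shifted coefficients $(b{+}1)$ and $(a{+}1)$ in place of $a$ and $b$.

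For part (b), your Leibniz-rule argument---applying $\ad_F$ and then $\ad_F^{2}$ to $d_1^{2}=0$---is a clean repackaging of the paper's explicit expansion of $(Fd_1-d_1F)^{2}$; both rely on $\ad_F^{2}(d_1)=0$ and are equivalent, but yours makes the mechanism more transparent.
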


We illustrate the action of $d_{a|b}$ in the homology of the 15-crossing torus knot $T(4,5)$ in Section \ref{sec: T45}.

\begin{corollary}
\label{cor: intro E diffs}
The differentials $d_N$ commute with the action of $E$.
\end{corollary}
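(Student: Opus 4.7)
The plan is to deduce the corollary directly from the commutation relations in Theorem \ref{thm: intro diff relations}(a), which are the only ingredient needed. Specifically, the theorem supplies two pieces of data: the identification $d_N = d_{N|0}$, and the bracket formula
\[
[E, d_{a|b}] = b \cdot d_{a+1|b-1}.
\]
My strategy is simply to evaluate this bracket at the boundary parameter $(a,b) = (N, 0)$.

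Carrying this out, the right-hand side becomes $0 \cdot d_{N+1|-1} = 0$, so $[E, d_{N|0}] = 0$. Using the identification $d_N = d_{N|0}$, this reads $[E, d_N] = 0$, which is exactly the assertion of the corollary.

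I do not anticipate any obstacle. The entire content of the statement is already packaged inside Theorem \ref{thm: intro diff relations}(a): the real work is the construction of the family $\{d_{a|b}\}$ and the verification that it transforms in the prescribed way under the $\sll(2)$-triple $(E,H,F)$. Once that theorem is in hand, the commutation of $E$ with each $d_N$ is a purely formal consequence of the fact that $d_N$ sits at the edge $b=0$ of the family, where the coefficient in the $E$-bracket vanishes. It is worth noting that a priori there is no reason to expect any individual differential $d_N$ to commute with $E$; the commutation is forced only by the existence of the larger family, in which $d_N = d_{N|0}$ occupies the highest-weight position with respect to $E$.
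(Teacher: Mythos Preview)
Your formal deduction is correct: setting $b=0$ in the bracket relation $[E,d_{a|b}]=b\cdot d_{a+1|b-1}$ immediately gives $[E,d_{N|0}]=0$, i.e.\ $[E,d_N]=0$.

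However, the paper's logical route is the \emph{reverse} of yours, and this matters. As the paper states explicitly just after the corollary, it first proves $[E,d_N]=0$ directly as Theorem~\ref{thm: E diffs}, and only then deduces Theorem~\ref{thm: intro diff relations} from it (Lemma~\ref{lem: super diffs}). The direct proof works at the chain level: the operator $E$ is built from the dot-sliding homotopies $\xi_i$ (via the operator $u$ in \eqref{eq: u easy}) together with a derivation in the $y$-variables, and Lemma~\ref{lem: xi commutes with slN} shows each $\xi_i$ commutes with $d_+$ and $d_{\partial W}$; this forces $[E,d_N]=0$. The family $d_{a|b}$ is then \emph{defined} by $d_{a|b}:=\frac{1}{b!}\ad_F^b(d_N)$, and the relation $[E,d_{N|0}]=0$ is used as the input (it is what makes $d_N$ a highest weight vector). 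So your argument, taken as the actual proof, would be circular. Your closing remark that ``the real work is the construction of the family $\{d_{a|b}\}$'' has it backwards: the real work is the chain-level commutation, and the family comes for free once that is known.
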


In fact, we first prove Corollary \ref{cor: intro E diffs} as Theorem \ref{thm: E diffs}, and then deduce Theorem \ref{thm: intro diff relations} from it in Lemma \ref{lem: super diffs}.

By Theorem \ref{thm: intro diff relations}(b), the differentials $d_{1}$ and $d_{-1}$ anticommute and span a two-dimensional representation of $\sll(2)$. By Theorem \ref{thm: intro S} the action of $d_1$ (and hence of $d_{-1}$) is completely determined by $\HHH(K)$ for all knots in the dataset. In Proposition \ref{prop: blocks} we use these ideas to decompose $\HHH(K)$ in symmetric blocks and explain some patterns in triply graded homology.

For all knots in the dataset, the homology of $d_1$ is one-dimensional (by Proposition \ref{prop: d1 exceptions}) and supported in $\Delta$-grading $(-S)$ and bidegree $(q,a)=(S,-S)$. Similarly, the homology of $d_{-1}$ is one-dimensional and supported in the same $\Delta$-grading and $(q,a)=(-S,-S)$. We illustrate both of these homologies by red circles in all figures and write the value of the $S$-invariant for the reader's convenience.

\section*{Acknowledgments}

We thank Lukas Lewark, Allison Moore, Jake Rasmussen, Paul Wedrich and Melissa Zhang for helpful discussions and suggestions. The reduced Khovanov homology and $\sll(3)$ homology were computed using Lukas Lewark's programs \cite{FoamHo,KhoCa}, we thank Lukas for help with running them. E. G. was partially supported by the NSF grant DMS-1760329, and by the NSF grant DMS-1928930 while E. G. was in residence at the Simons Laufer Mathematical Sciences Institute (previously known as MSRI) in Berkeley, California, during the Fall 2022 semester.

\section{Link homology data}
\label{sec: data}

In this section we discuss in detail the link homology data provided by \cite{NS}, and its interaction with $\sll(N)$ Khovanov-Rozansky homology and the associated spectral sequences defined by Rasmussen in \cite{RasDiff}. 

We postpone the formal definitions of HOMFLY-PT homology and Rasmussen spectral sequences to Section \ref{sec: theory}, where we prove some new general properties of differentials in these spectral sequences. 
In this section, we mostly use these as a black box, recording only the grading shifts and show that even this limited information in many cases yields very precise answers. 

\subsection{Gradings on $\HHH$}

In this section we collect and compare several grading conventions from various sources. 
We use the same skein relation for the (normalized) HOMFLY-PT polynomial $P(L)$ for links $L$ as Nakagane and Sano \cite{NS}:

$$a^{-1}P(\usebox\poscrossing)-aP(\usebox\negcrossing)=(q^{-1}-q)P(\usebox\onesmoothing) \ \ \ \ \ \ \ \ P(\usebox\unknot)=1.$$

This differs from the HOMFLY-PT polynomial $P_\DGR$ used in \cite{DGR} by $P_\DGR(q,a)=P(q,a^{-1})$, so $P(L) = P_\DGR(\bar{L})$ where $\bar{L}$ denotes the mirror of the link $L$.

Unless stated otherwise, we will always work in   {\bf reduced} triply graded homology with rational coefficients. By the universal coefficient theorem, this is equivalent to working with any other characteristic zero field (such as $\mathbb{C}$) and we will not make this distinction unless stated otherwise.
The triply graded homology $\HHH$ has gradings $(q,a,t)$. Our grading choice follows Nakagane and Sano \cite{NS} which is a bit unusual compared with other sources. 

The Poincar\'e polynomial for $\HHH(L)$ is defined as $\CP(L)(q,a,t)=\sum_{i,j,k}q^ia^jt^{\frac{k-j}{2}}\dim\HHH^{i,j,k}(L)$
and we have $P(L)(q,a)=\CP(L)(q,a,-1)$ for all links $L$ \cite[Section 1]{NS}.

\begin{remark}\label{note_on_gradings}
One slightly confusing note: we often go back and forth between thinking about the Poincar\'e polynomial and the homology, which are two different ways of presenting the same data. When referring to homology, the letters $q,a,t$ refer to the values of the gradings (e.g. if $q=1, a=2, t=3$ we are talking about $\HHH^{1,2,3}(L)$) but when talking about Poincar\'e polynomials, these same values become the powers on the formal variables $q,a,t$ (e.g. if $q=1, a=2, t=3$ we are talking about the term $qa^2t^3$). 
\end{remark}

Another choice of gradings is discussed in the paper of Dunfield, Gukov and Rasmussen \cite{DGR}. The $q$ and $a$ gradings are the same, but the third grading is $t_{\DGR}=\frac{1}{2}(a-t)$.
For knots, the gradings $q,a,t$ are all even and therefore $t_{\DGR}$ is an integer. 


For signatures, we use the same convention as in \cite{DGR} that a negative knot has positive signature, so in particular, if $K$ is the negative trefoil, $\sigma(K)=2$.

Finally, for any $N>0$ we will use $\sll(N)$-modified $q$-grading $q_{\sll(N)}=q+Na$. As we will see below in Proposition \ref{prop: differentials}, this agrees with the $q$-grading on $\sll(N)$ Khovanov-Rozansky homology up to a certain spectral sequence.

\subsection{Delta grading and thin knots}
 
The delta grading $\Delta=q+a+t$ will be extremely useful for our work. It is different from the $\delta$-grading used in \cite{DGR} by a factor of $(-2)$:
$$
\delta_{\DGR}=-\frac{\Delta}{2}=t_{\DGR}-\frac{q}{2}-a.
$$

A reader might be more familiar with the $\delta$-grading in $\sll(2)$ homology defined by 
$$
\delta_{\sll(2)}=\frac{q_{\sll(2)}}{2}-t_{\DGR}=\frac{q}{2}+a-t_{\DGR}.
$$
This agrees with the grading $\frac{\Delta}{2}$, up to a spectral sequence from  Proposition \ref{prop: differentials}.

We call a knot $K$ {\bf thin} (or KR-thin as in \cite{RasTwoBridge}) if $\HHH(K)$ is supported in a single $\Delta$-grading. We will denote by $\HHH_{\Delta}(K)$ the subspace of $\HHH(K)$ in delta grading $\Delta$.
We recall an important result of Rasmussen:

\begin{theorem}[\cite{RasTwoBridge}]
Any two-bridge knot is thin and supported in the $\Delta$-grading equal to negative its signature.
\end{theorem}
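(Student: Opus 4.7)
The plan is to argue by induction on the crossing number of a reduced alternating rational-tangle diagram of the two-bridge knot, using the skein exact triangle in reduced triply graded homology. For an oriented skein triple at a selected crossing, one has a long exact sequence relating $\HHH(L_+)$, $\HHH(L_-)$ and $\HHH(L_0)$ with explicit $(q,a,t)$-grading shifts that depend only on the sign of the resolved crossing. The first step is to translate these shifts into $\Delta$-grading language and record the following key fact: if two of the three knots/links in the triple are thin with supporting $\Delta$-gradings that differ by exactly the shift dictated by the triangle, then the third term is forced to be thin as well in the remaining $\Delta$-grading.

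I would then exploit the rational-tangle structure of two-bridge knots. Picking an outermost crossing in the standard alternating diagram, one of the two resolutions yields a simpler two-bridge knot, handled by the inductive hypothesis, while the other yields a two-bridge two-component link. This requires running a parallel induction for two-bridge links, showing that they too are thin (possibly in two $\Delta$-gradings, as happens for the Hopf link) with support determined by signature and linking number. Feeding both pieces of data into the skein triangle at the outermost crossing then pins down $\HHH(K)$ as claimed. The base cases are the unknot, for which $\HHH$ is one-dimensional in tridegree $(0,0,0)$ with $\sigma=0$, together with the Hopf links, to which the link induction reduces.

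The main obstacle is signature bookkeeping: one must verify that the $\Delta$-grading produced by the exact triangle at each step is exactly $-\sigma(K)$. This is a purely two-bridge statement, and can be extracted from the recursive formulas for signatures of rational knots and links (via Goeritz forms, or via the Kanenobu--Murakami recursion on continued-fraction expansions), combined with the standard relations among $\sigma(L_+)$, $\sigma(L_-)$ and $\sigma(L_0)$ for signs of the resolved crossing. A secondary issue is confirming that the connecting homomorphism in the skein triangle actually vanishes in the off-diagonal $\Delta$-gradings, so that the triangle decomposes into short exact sequences one $\Delta$-grading at a time; once the two known terms are placed in the predicted $\Delta$-gradings, any hypothetical off-diagonal class in $\HHH(K)$ would have to map to, or come from, a trivial group, forcing the desired thinness and the identification of the support with $-\sigma(K)$.
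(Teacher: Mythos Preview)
The paper does not give its own proof of this theorem; it is quoted verbatim as a result of Rasmussen with the citation \cite{RasTwoBridge} and no argument is supplied. So there is nothing in the present paper to compare your proposal against.

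That said, your outline is broadly in the spirit of Rasmussen's original argument, which also proceeds inductively through the rational-tangle structure and handles two-bridge links alongside knots. One caution: the ``skein exact triangle'' you invoke for reduced triply graded homology is more delicate than its Khovanov-homology analogue. There is no clean long exact sequence relating $\HHH(L_+)$, $\HHH(L_-)$, and $\HHH(L_0)$ with the oriented resolution alone; what Rasmussen actually uses are the exact sequences coming from the cone description of the Rouquier complexes $T_i^{\pm 1}$, which relate the braid closure to a singular (wide-edge) resolution rather than to the other crossing sign. Your sketch would need to be rewritten in those terms, and the ``two known terms force the third'' step then requires controlling the homology of the singular resolution, which is where the real work in \cite{RasTwoBridge} lies. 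The signature bookkeeping you flag is indeed necessary and is handled in Rasmussen's paper via the explicit combinatorics of continued fractions, much as you suggest.
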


 \begin{figure}
    \begin{center}
        \includegraphics[width=0.8\textwidth]{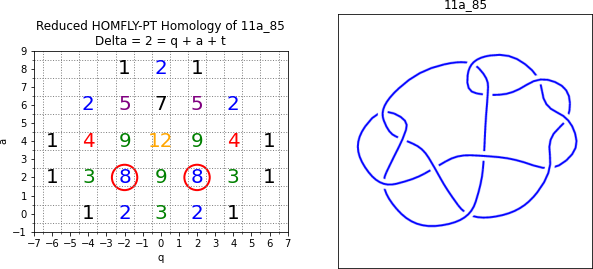}
    \end{center}
    \caption{Triply graded homology of the knot $K=11a_{85}$. Since $K$ is two-bridge, it follows that $K$ is thin. The $q$ and $a$ gradings are plotted, and $t$ is determined by $q+a+t=2$. 
    The $S$-invariant equals $-2$.
    }
    \label{two-bridge-11a85}
\end{figure}

Figure \ref{two-bridge-11a85} shows an example of HOMFLY-PT homology for a two-bridge link. We define the $\Delta$-thickness $|\Delta|(K)$ as the number of consecutive $\Delta$-gradings supporting $\HHH(K)$, that is,
$$
|\Delta|(K)=\frac{1}{2}\left(\max\{\Delta:\HHH_{\Delta}\neq 0\} -\min\{\Delta:\HHH_{\Delta}\neq 0\}\right)+1.
$$
For thin knots we have $|\Delta|=1$. Recall all two-bridge knots are alternating.
Figure \ref{two-bridge-11a85} shows an example of HOMFLY-PT homology for a two-bridge knot $11a_{85}$ supported in a single delta grading $\Delta=2$.

 We show the distribution of $\Delta$-gradings for knots in the dataset in Table \ref{tab: delta distribution}. The only alternating knot supported in two $\Delta$-gradings in the dataset is $11a_{263}$, see Figure \ref{fig: 11a263}.

\begin{remark}
It is easy to see that there exist knots with arbitrarily large $\Delta$-thickness. For example, it follows from \cite{Mellit} that the triply graded homology for $(p,p+1)$ torus knot contains classes with both $(q,a,t_{\DGR})=(-p(p-1),p(p-1),0)$ and $(q,a,t_{\DGR})=(0,p(p+1)-2,p^2-1)$ with $\Delta=p(p-1)$ and $\Delta=2p-2$ respectively, so $|\Delta|(T(p,p+1))\ge \frac{(p-1)(p-2)}{2}+1$.
One could also use connect sums of knots to get arbitrary large $|\Delta|$.
\end{remark}

 \begin{figure}
    \begin{center}
        \includegraphics[width=\textwidth]{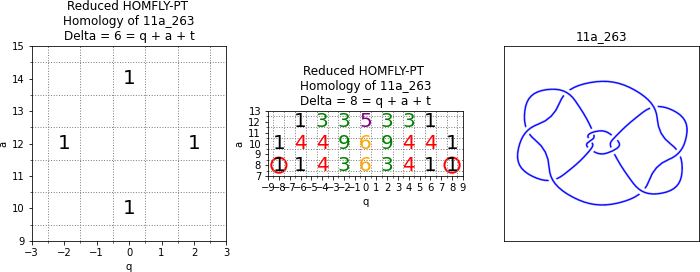}
    \end{center}
    \caption{Triply graded homology of the knot $K=11a_{263}$. This is the only alternating knot in the dataset which is not thin. The $S$-invariant equals $-8$.
    }
    \label{fig: 11a263}
\end{figure}

We call a knot $K$ {\bf parity} if $t/2$ has the same parity on $\HHH(K)$. Equivalently, $t_{\DGR}-\frac{a}{2}$ has the same parity and 
$a+q-\Delta$ has the same remainder modulo 4.
The main result of \cite{Mellit} implies that all torus knots are parity.

\subsection{Differentials and $\sll(N)$ homology}

We will use differentials $d_N$ on HOMFLY-PT homology and the associated spectral sequences, see the details below in Section \ref{sec: theory}, in particular Theorem \ref{thm: Rasmussen ss}. We will denote the reduced $\sll(N)$ homology by $H_{\sll(N)}(K)$.

\begin{proposition}[\cite{RasDiff}]
\label{prop: differentials}
There exists a spectral sequence with $E_1$ page isomorphic to  $\HHH(K)$ and   $i$-th differential $d_N^{(i)}$ changing the degrees by $(q,a,t)\to (q+2Ni,a-2i,t+2-2i)$. The $E_{\infty}$ page is isomorphic to $H_{\sll(N)}(K)$ with modified $q$-grading $q_{\sll(N)}=q+Na$.
\end{proposition}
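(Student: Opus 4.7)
The plan is to invoke Rasmussen's construction from \cite{RasDiff} and carry out a careful translation of grading conventions. Beyond the citation itself, the real content of the proposition lies in verifying that the shifts match the \cite{NS} conventions adopted here.

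First, I would recall Rasmussen's setup: $\HHH(K)$ is computed by a triply graded complex of Soergel bimodules (equivalently, matrix factorizations), and for each $N \geq 1$ one perturbs the differential by a term corresponding to a potential of degree $N+1$ in the edge variables. The resulting anticommuting family $\{d_N\}$ on $\HHH(K)$ has the property that the $d_N$-homology recovers $H_{\sll(N)}(K)$. The spectral sequence in the proposition is then the one associated to the filtration by $a$-grading on this deformed complex: the $E_0$ page is the HOMFLY-PT complex, the $E_1$ page is $\HHH(K)$, and the $E_\infty$ page is $H_{\sll(N)}(K)$.

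Second, I would check the grading shifts. The basic differential $d_N$ strips off one Hochschild generator (of $a$-degree $+2$) and multiplies by a monomial of $q$-degree $2N$, so it shifts $(q,a) \mapsto (q+2N, a-2)$. Iterating $i$ times on the $E_i$-page produces a shift $(q+2Ni, a-2i)$, and the combination $q + Na$ is preserved since $2Ni + N(-2i) = 0$. This explains why the $E_\infty$ page carries a well-defined grading $q_{\sll(N)} = q + Na$. The $t$-grading shift follows because the differential on $E_i$ must have total cohomological degree $+1$; translating this through the relation $t = a - 2t_{\DGR}$ between the \cite{NS} convention used here and the homological grading $t_{\DGR}$ of \cite{DGR} gives $t \mapsto t + 2 - 2i$.

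The main obstacle is the $t$-grading bookkeeping: the $q$- and $a$-shifts are essentially dictated by the shape of the potential $x^{N+1}$, but the \cite{NS} convention differs from Rasmussen's original and from \cite{DGR} by a linear change of coordinates, so pinning down the exact constant $+2$ in $t+2-2i$ requires care in tracking normalizations of the homological and Hochschild gradings. Once the dictionary is fixed, the remaining verification is mechanical.
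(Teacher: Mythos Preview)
Your proposal is correct and matches the paper's treatment: Proposition~\ref{prop: differentials} is stated in the paper as a citation to \cite{RasDiff} without an accompanying proof, so the substance is exactly what you describe --- invoke Rasmussen's spectral sequence and translate gradings into the \cite{NS} conventions. The paper defers the actual construction to Section~\ref{sec: theory} (Theorem~\ref{thm: Rasmussen ss} and Remark~\ref{rem: Rasmussen ss details}), and your sketch of that construction is accurate. One small wording issue: the differential $d_N^{(i)}$ \emph{decreases} $t_{\DGR}$ by $1$ (see Corollary~\ref{cor: dN delta}), so ``cohomological degree $+1$'' is the wrong sign in the usual convention; with $t_{\DGR}\to t_{\DGR}-1$ and $a\to a-2i$ the relation $t=a-2t_{\DGR}$ indeed gives $t\to t+2-2i$ as you claim.
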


\begin{remark}
The $t$-grading on $\sll(N)$ homology which we use here agrees with $t_{\DGR}$.   
\end{remark}

We will denote the first differential in this spectral sequence by $d_N=d_N^{(1)}$. In our conventions it changes degrees by $(q,a,t)\to(q+2N,a-2,t)$. The differentials $d_N,d_{N'}$ anticommute for different $N,N'$.

\begin{corollary}
\label{cor: dN delta}
The differential $d_N^{(i)}$ increases the $\Delta$-grading by $2Ni-4i+2$ and decreases $t_{\DGR}$ by 1.
\end{corollary}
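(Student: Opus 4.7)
The plan is to obtain this corollary as an immediate arithmetic consequence of Proposition \ref{prop: differentials} together with the definitions of the two gradings $\Delta$ and $t_{\DGR}$ introduced earlier in this section. No new geometric input is needed; the content is purely bookkeeping of grading shifts.

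First I would recall that Proposition \ref{prop: differentials} tells us that $d_N^{(i)}$ sends the tridegree $(q,a,t)$ to $(q+2Ni,\, a-2i,\, t+2-2i)$. Next I would apply the definition $\Delta = q+a+t$ and simply sum the three shifts to get
\[
\Delta \longmapsto \Delta + 2Ni + (-2i) + (2-2i) = \Delta + 2Ni - 4i + 2,
\]
which is the first claim. Then I would apply the definition $t_{\DGR} = \tfrac{1}{2}(a-t)$ and compute
\[
t_{\DGR} \longmapsto t_{\DGR} + \tfrac{1}{2}\bigl((-2i) - (2-2i)\bigr) = t_{\DGR} + \tfrac{1}{2}(-2) = t_{\DGR} - 1,
\]
which is the second claim.

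There is really no obstacle here; the statement is a direct unpacking of definitions, and the only thing worth double-checking is that the conventions for $\Delta$ and $t_{\DGR}$ used in Section \ref{sec: data} match those in Proposition \ref{prop: differentials} (which they do, since both track the same $(q,a,t)$ grading inherited from \cite{NS}). Accordingly, I would present the corollary with a one- or two-line proof consisting of these two arithmetic computations, perhaps displayed together for clarity, and note in passing that the second fact is a useful sanity check: all differentials $d_N^{(i)}$ lower the $t_{\DGR}$-grading by $1$, independent of $N$ and $i$, which is consistent with the behavior of the differentials in the Rasmussen spectral sequences as used in \cite{RasDiff} and \cite{DGR}.
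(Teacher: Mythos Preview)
Your proposal is correct and is exactly the approach the paper intends: the corollary is stated without proof immediately after Proposition~\ref{prop: differentials}, since it follows by the same arithmetic you carried out from the definitions $\Delta=q+a+t$ and $t_{\DGR}=\tfrac{1}{2}(a-t)$.
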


\begin{corollary}
\label{cor: dN parity}
If $K$ is parity then the differentials $d_N^{(i)}$ vanish for even $i$.
\end{corollary}





As a special case of Proposition \ref{prop: differentials}, we can consider $N=1$. The $\sll(1)$ homology of any knot is one-dimensional and supported in bidegree $(q_{\sll(1)},t_{\DGR})=(0,0)$. Since $q_{\sll(1)}=q+a$, we get the following:

\begin{corollary}
\label{cor: def S}
a) There is a spectral sequence starting at $\HHH(K)$ and converging to $E_{\infty}\simeq H_{\sll(1)}(K)\simeq \Q$. 
The surviving homology group is supported in tridegree $(q,a,t)=(S,-S,-S)$ and $\Delta$-grading equal to $-S$. 

b) The differentials $d_1^{(i)}$ change the degrees by 
$(q,a,t)\to (q+2i,a-2i,t+2-2i)$ and increase the $\Delta$-grading by $2-2i$.
\end{corollary}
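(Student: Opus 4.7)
The plan is to apply Proposition \ref{prop: differentials} with $N=1$ and pin down the tridegree of the surviving class using the known shape of $\sll(1)$ homology. First, by Proposition \ref{prop: differentials} applied at $N=1$, there is a spectral sequence with $E_1 \cong \HHH(K)$ converging to $H_{\sll(1)}(K)$ with modified grading $q_{\sll(1)}=q+a$. Since reduced $\sll(1)$ link homology of any knot is one-dimensional and concentrated in bidegree $(q_{\sll(1)},t_{\DGR})=(0,0)$, the $E_\infty$ page is indeed $\Q$ as claimed.

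Next, I would read off the surviving tridegree. Let $(q,a,t)$ denote the tridegree of the unique surviving generator. The condition $q_{\sll(1)}=q+a=0$ gives $a=-q$, and the condition $t_{\DGR}=\frac{1}{2}(a-t)=0$ gives $t=a$. Writing $S:=q$ for this value (this is the definition of the HOMFLY-PT $S$-invariant referenced in the theorem statement), we obtain $(q,a,t)=(S,-S,-S)$. Then the $\Delta$-grading is computed directly as
\[
\Delta = q+a+t = S + (-S) + (-S) = -S,
\]
which proves part (a).

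For part (b), I would simply specialize the grading shift formula of Proposition \ref{prop: differentials} to $N=1$: the differential $d_1^{(i)}$ changes $(q,a,t)$ by $(+2i,-2i,+2-2i)$. The induced shift on $\Delta = q+a+t$ is then
\[
\Delta \;\longmapsto\; \Delta + 2i - 2i + (2-2i) = \Delta + (2-2i),
\]
which matches the claim.

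The argument is essentially a specialization and there is no serious obstacle; the only point worth being explicit about is the (well-known) identification of reduced $\sll(1)$ homology with $\Q$ in bidegree $(0,0)$, which justifies that the $E_\infty$ page has a single generator whose location in $\HHH(K)$ is then forced by the grading formulas above.
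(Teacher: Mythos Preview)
Your argument is correct and matches the paper's approach exactly: the paper derives this corollary as a direct specialization of Proposition~\ref{prop: differentials} to $N=1$, using that reduced $\sll(1)$ homology is one-dimensional in bidegree $(q_{\sll(1)},t_{\DGR})=(0,0)$, and then defines $S$ as the $q$-grading of the surviving generator just as you do.
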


The $S$-invariant of $K$ \cite{DGR} is defined as the $q$-grading of surviving generator of $H_{\sll(1)}(K)$.
One can define the analogues of $S$-invariant for $\sll(N)$ homology using the following theorem. 

\begin{theorem}[\cite{Lobb1,Lobb2,LL,Wu}]
\label{thm: slN Lee}
Let $\partial W$ be a degree $N$ monic polynomial with simple root at 0. Then there is a spectral sequence starting at $H_{\sll(N)}(K)$ and converging to the one-dimensional homology $H_{\partial W}$, which depends on $\partial W$.
\end{theorem}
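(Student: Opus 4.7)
The plan is to follow the matrix factorization construction of Khovanov-Rozansky and view both $H_{\sll(N)}(K)$ and the $\partial W$-deformed theory as link invariants arising from different potentials. Recall that reduced $\sll(N)$ Khovanov-Rozansky homology is built from matrix factorizations with potential $W_0(x) = x^{N+1}/(N+1)$, whose derivative $\partial W_0(x) = x^N$ is the monic degree-$N$ polynomial with a root of multiplicity $N$ at $0$. For any monic degree-$N$ polynomial $\partial W$, choosing an antiderivative $W$ and repeating the construction produces a new matrix factorization complex. A standard check, as carried out in \cite{LL,Wu}, shows that the MOY relations and Reidemeister invariance remain valid in the deformed setting, so that a well-defined link homology $H_{\partial W}(K)$ exists.

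To construct the spectral sequence, I would observe that $\partial W - \partial W_0 = \partial W - x^N$ has degree strictly less than $N$. In the $q$-grading with $\deg(x) = 2$, this means $W - W_0$ is of strictly lower degree than $W_0$, so adding it to the potential perturbs the matrix factorization differential by terms that are lower-order with respect to a descending filtration by $q$-degree. The associated graded with respect to this filtration recovers the undeformed $\sll(N)$ complex, so the filtration spectral sequence has $E_1 \cong H_{\sll(N)}(K)$ and converges to $H_{\partial W}(K)$.

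The remaining task is to show $H_{\partial W}(K)$ is one-dimensional. The variable $x$ attached to a basepoint acts on the knot homology and satisfies $\partial W(x) = 0$, so $H_{\partial W}(K)$ carries the structure of a module over $\mathbb{C}[x]/(\partial W(x))$ which splits over the roots of $\partial W$. For the unknot one computes directly that each simple root contributes a one-dimensional summand, and the propagation to arbitrary knots proceeds by constructing canonical generators analogous to Lee's idempotent computation for $\sll(2)$. The hypothesis that $0$ is a simple root is used precisely to extract a one-dimensional direct summand via localization of $\mathbb{C}[x]/(\partial W(x))$ at $x = 0$.

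The main obstacle, and the real content of the cited works of Lobb, Lewark-Lobb, and Wu, is precisely the propagation of the rank-one character of the simple-root summand from the unknot to arbitrary knots. Higher-multiplicity roots would yield higher-rank summands, so the simple-root hypothesis is essential. Once the rank-one statement is established for simple roots, one-dimensionality of $H_{\partial W}(K)$ follows by specializing to the simple root at $0$, and the convergence of the spectral sequence is automatic from its construction as a filtration spectral sequence on a finite-dimensional complex.
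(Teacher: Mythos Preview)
Your proposal is correct, and the construction of the spectral sequence via the $q_{\sll(N)}$-filtration is exactly what the paper does in part (c) of Theorem \ref{thm: W s invariant}. Where you diverge from the paper is in the proof that $H_{\partial W}(K)$ is one-dimensional.

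You follow the original route of the cited works: decompose the deformed homology as a module over $\C[x]/(\partial W)$, build Lee/Gornik-type canonical generators root by root, and then pick out the summand at the simple root $0$ to obtain the reduced theory. This is correct and is indeed the ``real content'' of \cite{Lobb1,Lobb2,LL,Wu}, as you say. The paper, by contrast, bypasses the canonical-generator argument entirely by working inside the HOMFLY-PT framework: writing $d_{\partial W}=d_N+a_Nd_{N-1}+\cdots+a_2d_1$, it filters by $q_{\sll(1)}=q+a$ instead of $q_{\sll(N)}$. With respect to this filtration only the term $a_2d_1$ survives on the associated graded, and since the $\sll(1)$ homology of any knot is automatically one-dimensional (Corollary \ref{cor: def S}), the spectral sequence forces $H_{\partial W}(K)$ to be one-dimensional with no further work. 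Your approach has the advantage of staying at the $\sll(N)$ level and matching the literature you cite; the paper's approach trades the delicate canonical-generator construction for an extra layer of machinery (the Rasmussen differentials $d_m$ on $\HHH$) but then gets one-dimensionality essentially for free from the trivial $N=1$ case.
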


See Theorem \ref{thm: W s invariant} for more details and Section \ref{sec: small example} for some examples for various $\partial W$.

\begin{definition}
Let $j_{\partial W}$ be the $q_{\sll(N)}$-degree of the surviving generator in the spectral sequence from Theorem \ref{thm: slN Lee}. Define
$$
s_{\partial W}=\frac{j_{\partial W}}{2(N-1)}.
$$
\end{definition}

In particular, for $N=2$ the invariant $s_{\partial W}$ does not depend on the choice of $\partial W$  (see Remark \ref{rem: s2 unique}) and we recover the celebrated $s$-invariant \cite{RasSlice} by $s=2s_2$.
The importance of the invariants $s_{\partial W}$ is shown by the following result: 

\begin{theorem}[\cite{Lobb1,Lobb2,LL,Wu}]
The slice genus $g_*(K)$ satisfies the inequality 
$$
g_*(K)\ge |s_{\partial W}(K)|
$$
\end{theorem}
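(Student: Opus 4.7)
The plan is to run Rasmussen's slice genus argument in the deformed $\sll(N)$ setting, using the filtered chain level interpretation of the spectral sequence in Theorem \ref{thm: slN Lee}. The invariants $s_{\partial W}$ arise from a filtered complex $C_{\partial W}(K)$ whose associated graded is the $\sll(N)$ Khovanov-Rozansky complex and whose homology $H_{\partial W}(K)$ is one-dimensional; the $q_{\sll(N)}$-filtration level of the surviving generator is, up to the normalization $2(N-1)$, exactly $s_{\partial W}(K)$. I would cite the foliated/matrix factorization constructions of Wu and Lewark--Lobb for this chain level model.

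First I would invoke functoriality: any oriented cobordism $\Sigma : K_0 \to K_1$ properly embedded in $S^3 \times [0,1]$ induces a filtered chain map $\Sigma_* : C_{\partial W}(K_0) \to C_{\partial W}(K_1)$, well defined up to filtered chain homotopy, which shifts the $q_{\sll(N)}$-filtration by $(N-1)\chi(\Sigma)$. This is proved by decomposing $\Sigma$ into elementary cobordisms (births, saddles, deaths, Reidemeister moves) and checking the degree shift of each local model in the matrix factorization/foam calculus; the relevant pieces already appear in the proofs for $N=2$ by Rasmussen and for the undeformed $\sll(N)$ theory by Khovanov--Rozansky. The only new input needed here is that the deformation by $\partial W$ preserves filtered functoriality, which was checked by Lewark--Lobb.

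Second, given a smoothly embedded surface $F \subset B^4$ with $\partial F = K$ and $g(F) = g_*(K)$, remove a small disk from $F$ to obtain a cobordism $\Sigma : \text{unknot} \to K$ of Euler characteristic $-2 g_*(K)$, and let $\Sigma^\vee : K \to \text{unknot}$ be its mirror. Since $H_{\partial W}$ of the unknot is one-dimensional, the composition $\Sigma^\vee_* \circ \Sigma_*$ is a scalar; a standard local argument (or the analogous computation in \cite{LL}) shows this scalar is nonzero, so both $\Sigma_*$ and $\Sigma^\vee_*$ are isomorphisms on $H_{\partial W}(\text{unknot}) \cong \mathbb{Q}$. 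Normalizing so that the generator of $H_{\partial W}(\text{unknot})$ sits in $q_{\sll(N)}$-filtration $0$, the filtration shift of $\Sigma_*$ gives that the image of this generator in $H_{\partial W}(K)$ lies in filtration at least $-(N-1)\cdot 2 g_*(K)$, while the filtration shift of $\Sigma^\vee_*$ bounds it above by $(N-1)\cdot 2 g_*(K)$. Since the image is a nonzero class in the one-dimensional $H_{\partial W}(K)$, it must coincide with the surviving generator, whose filtration level is $2(N-1)\, s_{\partial W}(K)$. Combining the two inequalities yields $|s_{\partial W}(K)| \le g_*(K)$.

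The main obstacle is the chain level filtered functoriality for arbitrary $\partial W$, and in particular verifying that the composition $\Sigma^\vee_* \circ \Sigma_*$ is nonzero on the unknot; everything else is a bookkeeping exercise with filtration degrees. For $N=2$ this is Rasmussen's original argument, and for general $N$ and general $\partial W$ it is the content of the Lewark--Lobb and Wu construction, which is what is being cited in the theorem.
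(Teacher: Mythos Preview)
The paper does not prove this theorem at all: it is stated as a result imported from the cited references \cite{Lobb1,Lobb2,LL,Wu} and used as a black box, so there is no ``paper's own proof'' to compare against. Your sketch is the standard Rasmussen-style argument transported to the deformed $\sll(N)$ setting, which is indeed the content of the cited papers; as a summary of what those references do it is essentially accurate, with the caveat that the nonvanishing of $\Sigma^\vee_* \circ \Sigma_*$ is the genuinely delicate point (in \cite{LL} it is handled via the Lee-type splitting of $H_{\partial W}$ over the roots of $\partial W$ rather than by a direct local computation).
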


As a consequence, an efficient way of computing $s_{\partial W}$ 
for various $N$ and various polynomials $\partial W$ would give a collection of slice genus bounds for the knot $K$.

\begin{remark}
In \cite{LL} Lewark and Lobb proposed more subtle slice genus bounds from {\bf unreduced} $\sll(N)$ homology and its deformations. It is unlikely if these can be computed by methods of this paper since unreduced HOMFLY-PT homology is infinite-dimensional and supported in infinitely many $\Delta$-gradings.
\end{remark}

\begin{proposition}
\label{prop: N large}
Assume that $\HHH(K)$ has $\Delta$-thickness $|\Delta|$. Then 
$$
H_{\sll(N)}(K)\simeq \HHH(K),\ s_{x^N-x}(K)=-\frac{1}{2}S(K)
$$
for all $N>|\Delta|$.
\end{proposition}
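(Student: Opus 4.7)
The plan is a two-step degree argument, both steps driven by Corollary \ref{cor: dN delta}.

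For the first claim, by Corollary \ref{cor: dN delta} the differential $d_N^{(i)}$ shifts the $\Delta$-grading by $2Ni-4i+2=2i(N-2)+2$. For $i\ge 1$ and $N\ge 2$ this shift is at least $2N-2$. On the other hand, by definition of $|\Delta|$ the $\Delta$-support of $\HHH(K)$ spans exactly $2(|\Delta|-1)$ units. The hypothesis $N>|\Delta|$ gives $2N-2>2|\Delta|-2$, so every $d_N^{(i)}$ is forced to vanish on $\HHH(K)$ for purely grading reasons. Hence the Rasmussen spectral sequence of Proposition \ref{prop: differentials} collapses at $E_1$, yielding $H_{\sll(N)}(K)\simeq\HHH(K)$ after the regrading $q_{\sll(N)}=q+Na$.

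For the second claim, I would trace the generator surviving in $H_{x^N-x}$ back to the $d_1$ spectral sequence of Corollary \ref{cor: def S}. Since $\partial W=x^N-x$ has a simple root at $0$, the root-$0$ component of the $\partial W$-deformation is one-dimensional and of the same shape as the $\sll(1)$ spectral sequence. Using the vanishing $d_N^{(i)}\equiv 0$ from the first step, I would argue that the $\partial W$-differential on $H_{\sll(N)}(K)\simeq\HHH(K)$ carrying the complex onto its root-$0$ summand agrees (up to a unit) with $d_1$, so the surviving class sits in tridegree $(q,a,t)=(S(K),-S(K),-S(K))$ by Corollary \ref{cor: def S}. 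Its $q_{\sll(N)}$-grading is then
$$
j_{x^N-x}=q+Na=S(K)-NS(K)=-(N-1)S(K),
$$
and $s_{x^N-x}(K)=j_{x^N-x}/(2(N-1))=-\tfrac{1}{2}S(K)$ follows from the definition of $s_{\partial W}$.

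The main obstacle is the compatibility step: identifying the root-$0$ piece of the $\partial W=x^N-x$ differential on $H_{\sll(N)}(K)$ with $d_1$ under $H_{\sll(N)}(K)\simeq\HHH(K)$. For this I would rely on the root-splitting structure of $\partial W$-deformations from \cite{LL,RasDiff} and expanded in Section \ref{sec: theory} of the paper, where each simple root of $\partial W$ contributes a summand of $\sll(1)$-type and, in the absence of higher $d_N^{(i)}$, the surviving differential at the root $0$ is precisely $d_1$. Once this compatibility is in hand, the degree computation above pins down $s_{x^N-x}(K)$, and as a consistency check the case $|\Delta|=1$, $N=2$ recovers the expected relation $s(K)=2s_2(K)=-S(K)$ of the introduction.
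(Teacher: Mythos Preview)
Your first step matches the paper's proof exactly: the $\Delta$-shift bound forces all $d_N^{(i)}$ to vanish for $N>|\Delta|$, collapsing the Rasmussen spectral sequence.

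For the second step, your idea is correct but you are making it harder than it needs to be. You invoke the root-splitting of the $\partial W$-deformation from \cite{LL} and flag the identification of the root-$0$ summand as the ``main obstacle.'' The paper bypasses this entirely. By linearity of $d_{\partial W}$ in $W$ (Theorem~\ref{thm: W s invariant}(b)), one has $d_{x^N-x}=d_N-d_1$ on the nose, so the spectral sequence from $H_{\sll(N)}$ to $H_{x^N-x}$ in Theorem~\ref{thm: W s invariant}(c) has first differential exactly $-d_1$; this is Corollary~\ref{cor: dN d1}. Since $d_1$ and $d_N$ anticommute, $d_1$ descends to $H_{\sll(N)}$, and under the isomorphism $H_{\sll(N)}\simeq\HHH$ from the first step it agrees with the $d_1$ of Corollary~\ref{cor: def S}. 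The two spectral sequences therefore coincide, and the surviving generator sits at $(q,a,t)=(S,-S,-S)$ without any appeal to root decompositions. Your degree computation $j_{x^N-x}=-(N-1)S$ from there is then identical to the paper's.

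In short: no gap, but the ``obstacle'' you identify dissolves once you use anticommutativity of $d_1$ and $d_N$ together with the linearity $d_{x^N-x}=d_N-d_1$, rather than the heavier root-splitting machinery.
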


\begin{proof} Since $|\Delta|\ge 1$ we get $N\ge 2$.
By Corollary \ref{cor: dN delta} the differential $d_N^{(i)}$ changes the $\Delta$-grading by $2Ni-4i+2=2(N-2)i+2$. For $N>|\Delta|$ and $i\ge 1$ we have $2(N-2)i+2\ge 2(N-2)+2=2(N-1)\ge 2|\Delta|$, but the difference between the $\Delta$-gradings of any two classes is at most $2|\Delta|-2$. So all differentials $d_N^{(i)}$ must vanish and the spectral sequence collapses.

To compare the $S$-invariants, observe that the spectral sequence for $\partial W=x^N-x$ is induced by the differential $d_1$ on $H_{\sll(N)}$, see Corollary \ref{cor: dN d1} below (since $d_N$ and $d_1$ anticommute, the action of $d_1$ is well defined on $H_{\sll(N)}$). Therefore the spectral sequences of Corollary \ref{cor: def S} and Theorem \ref{thm: slN Lee} agree. The surviving generator for the former is supported in degrees $(q,a,t)=(S,-S,-S)$ and therefore has $q_{\sll(N)}$ degree $j_{x^N-x}=S+N(-S)=-(N-1)S$.
Now 
$$
s_{x^N-x}=\frac{j_{x^N-x}}{2(N-1)}=-\frac{1}{2}S.
$$
\end{proof}

\begin{corollary}
\label{cor: diffs collapse}
Let $K$ be a knot.
\begin{enumerate}[label=\alph*)]
\item If $|\Delta|=1$ then $H_{\sll(N)}(K)\simeq \HHH(K)$ for all $N\ge 2$.
\item If $|\Delta|=2$ then $H_{\sll(N)}(K)\simeq \HHH(K)$ for all $N\ge 3$. 
\end{enumerate}
In particular, for all knots in the dataset \cite{NS} we have $H_{\sll(3)}(K)\simeq \HHH(K)$.
\end{corollary}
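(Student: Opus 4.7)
The plan is to deduce this corollary as a direct specialization of Proposition \ref{prop: N large}, which already established the collapse of the Rasmussen spectral sequence once $N>|\Delta|$. The two parts of the corollary are simply the first two instances of that inequality, and the final ``in particular'' claim is then an immediate consequence combined with Table \ref{tab: delta distribution}.

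For part (a), I would assume $|\Delta|(K)=1$ and invoke Proposition \ref{prop: N large} with the hypothesis $N>1$. Since $N\ge 2$ satisfies $N>|\Delta|=1$, the proposition gives $H_{\sll(N)}(K)\simeq \HHH(K)$ for every $N\ge 2$. For part (b), I would assume $|\Delta|(K)=2$ and again apply Proposition \ref{prop: N large}, this time with $N>2$, so that every $N\ge 3$ yields $H_{\sll(N)}(K)\simeq \HHH(K)$. The only thing worth spelling out is that the $\Delta$-shift $2(N-2)i+2$ of the differential $d_N^{(i)}$ (from Corollary \ref{cor: dN delta}) exceeds $2|\Delta|-2$ in these ranges, so every $d_N^{(i)}$ with $i\ge 1$ must vanish on $\HHH(K)$ for purely grading reasons. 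This is already the core of the proof of Proposition \ref{prop: N large}, so there is nothing additional to verify.

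For the final assertion, I would invoke Table \ref{tab: delta distribution}: every knot in the Nakagane--Sano dataset satisfies $|\Delta|\in\{1,2\}$. Knots with $|\Delta|=1$ fall under part (a) applied at $N=3$, while knots with $|\Delta|=2$ fall under part (b) applied at $N=3$. In either case $H_{\sll(3)}(K)\simeq \HHH(K)$.

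There is no real obstacle here; all of the content has been absorbed into Proposition \ref{prop: N large}, and the corollary is essentially a bookkeeping statement. The only minor care required is to match the strict inequality $N>|\Delta|$ in the proposition with the non-strict inequalities $N\ge 2$ and $N\ge 3$ appearing in the corollary, which amounts simply to observing $2>1$ and $3>2$.
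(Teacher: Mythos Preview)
Your proposal is correct and matches the paper's approach: the corollary is stated immediately after Proposition~\ref{prop: N large} without a separate proof, precisely because it is the specialization you describe. The only content is the inequality $N>|\Delta|$ together with Table~\ref{tab: delta distribution}, exactly as you have written.
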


\begin{example}
The HOMFLY-PT and $\sll(3)$ homology of the knot $9_{11}$ are shown in Figure \ref{homfly-vs-sl3}. Since $|\Delta|=1$, the spectral sequence to $\sll(N)$ homology collapses for all $N\geq 2$, and $\HHH(K)\simeq H_{\sll(N)}(K)$ as vector spaces. However, the regrading changes the picture of the HOMFLY-PT homology (Figure \ref{homfly-vs-sl3}, left) to the $\sll(3)$ homology (Figure \ref{homfly-vs-sl3}, right) significantly.

 \begin{figure}[ht!]
    \begin{center}
        \includegraphics[width=\textwidth]{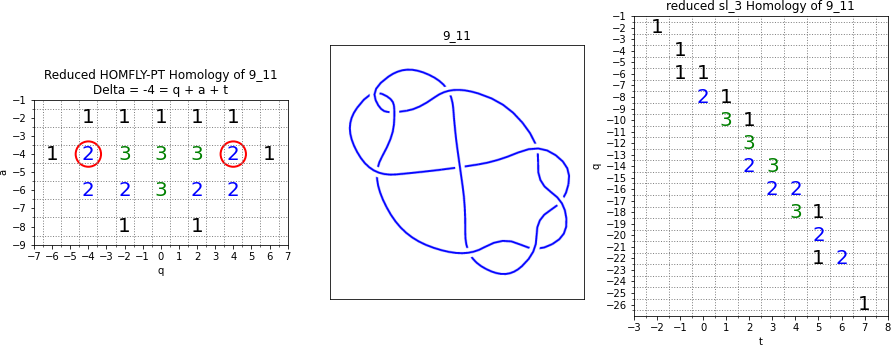}
    \end{center}
    \caption{Here we have both the triply graded homology and $\sll(3)$ homology of the knot $K=9_{11}$. The $S$-invariant equals $4$ while the $\sll(3)$ invariants are equal to $j_{x^3-x}=-8$, $s_{x^3-x}=\frac{j_{x^3-x}}{4}=-2$.
    }
    \label{homfly-vs-sl3}
\end{figure}

\end{example}

Next, we address the case $N=1$. We call a knot {\em $d_1$-standard} if all higher differentials $d_1^{(i)}$ in the $\sll(1)$ spectral sequence
vanish for $i\ge 2$.

\begin{lemma}
\label{lem: d1 standard}
Let $K$ be a knot.
\begin{enumerate}[label=\alph*)]
\item Assume that $\HHH(K)$ is supported in a single $\Delta$-grading. Then $K$ is $d_1$-standard.

\item Assume that $\HHH(K)$ is supported in two neighboring $\Delta$-gradings $(\Delta,\Delta+2)$. Furthermore, assume that 
$$
\max\{a: \HHH^{a,\Delta+2}\neq 0\}-4 < \min\{a: \HHH^{a,\Delta}\neq 0\}.
$$
Then $K$ is $d_1$-standard. 
\end{enumerate}
\end{lemma}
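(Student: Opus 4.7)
The strategy is to track the tri-grading shifts of the higher differentials $d_1^{(i)}$ recorded in Corollary \ref{cor: def S}(b), namely $(q,a,t) \mapsto (q+2i, a-2i, t+2-2i)$, which changes the $\Delta$-grading by $2-2i$. Since every page of the spectral sequence from Corollary \ref{cor: def S}(a) is a subquotient of $\HHH(K)$, any $d_1^{(i)}$ whose target tri-degree is unsupported in $\HHH(K)$ must vanish. Thus the entire argument reduces to bookkeeping on the $\Delta$- and $a$-support of $\HHH(K)$.

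For part (a), if $\HHH(K)$ is concentrated in a single $\Delta$-grading $\Delta_0$, then for every $i \geq 2$ the shift $2 - 2i \leq -2$ strictly lowers the $\Delta$-grading, so $d_1^{(i)}$ maps into a $\Delta$-grading where $\HHH(K)$ is trivial and hence vanishes. Only $d_1 = d_1^{(1)}$ survives, so $K$ is $d_1$-standard.

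For part (b), suppose $\HHH(K)$ is supported in two consecutive $\Delta$-gradings $\Delta_0$ and $\Delta_0 + 2$. The same argument gives $d_1^{(i)} = 0$ for $i \geq 3$, since $2-2i \leq -4$ drops $\Delta$ by at least $4$, landing outside the support. Only $d_1^{(2)}$ is in question; it lowers $\Delta$ by $2$ and $a$ by $4$, so it can act nontrivially only on classes in $\Delta$-grading $\Delta_0 + 2$, sending them to $\Delta$-grading $\Delta_0$. The hypothesis
$$
\max\{a: \HHH^{a,\Delta_0+2} \neq 0\} - 4 < \min\{a: \HHH^{a,\Delta_0} \neq 0\}
$$
says precisely that after the $-4$ shift on the $a$-grading, the image of any class in $\Delta_0+2$ falls below the $a$-support of $\HHH$ in $\Delta_0$, forcing $d_1^{(2)} = 0$ as well. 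No real obstacle is anticipated: the argument is a purely combinatorial grading-support calculation, and the $a$-shift hypothesis in (b) is designed exactly to rule out the one higher differential that is not automatically killed by the $\Delta$-grading drop.
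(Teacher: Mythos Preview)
Your proposal is correct and follows essentially the same approach as the paper: both use that $d_1^{(i)}$ shifts $\Delta$ by $2-2i$ to eliminate all higher differentials by $\Delta$-support (giving (a) immediately and leaving only $d_1^{(2)}$ in (b)), and then invoke the $a$-grading hypothesis to rule out $d_1^{(2)}$. Your explicit remark that each page of the spectral sequence is a subquotient of $\HHH(K)$ cleanly justifies why vanishing support in $\HHH(K)$ forces the higher differentials to vanish on later pages.
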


\begin{proof}
The  differential $d_1^{(i)}$ changes the $\Delta$-grading by $2-2i$, so (a) is clear and in (b) the only possible higher differential is $d_1^{(2)}$.
It shoots from $(a,\Delta+2)$ to $(a-4,\Delta)$, and by our assumptions  there is no room for it. 
\end{proof}

\begin{proposition}
\label{prop: d1 exceptions}
All knots in the dataset \cite{NS} are $d_1$-standard. 
\end{proposition}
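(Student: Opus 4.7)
The plan is to reduce to the two sufficient conditions already packaged in Lemma \ref{lem: d1 standard}. Since Table \ref{tab: delta distribution} tells us every knot in the dataset has $|\Delta|(K) \in \{1, 2\}$, these two cases cover everything we need to check.

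First, I would dispatch the 603 knots with $|\Delta|(K) = 1$ immediately by invoking Lemma \ref{lem: d1 standard}(a): for these knots $\HHH(K)$ lives in a single $\Delta$-grading, and since every higher $d_1^{(i)}$ shifts $\Delta$ by $2-2i \ne 0$ for $i \ge 2$, there is nothing for such a differential to land on. This case is essentially trivial.

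The substantive step is the 92 knots with $|\Delta|(K) = 2$. For these, Lemma \ref{lem: d1 standard}(b) has already narrowed the possible obstruction to a single differential, namely $d_1^{(2)}$, which shoots from $(a,\Delta+2)$ to $(a-4,\Delta)$. So it suffices to verify, for each of the 92 knots, the gap inequality
\[
\max\{a : \HHH^{a,\Delta+2}(K) \ne 0\} - 4 \;<\; \min\{a : \HHH^{a,\Delta}(K) \ne 0\}.
\]
Both quantities are readable directly from the Nakagane–Sano computation of $\HHH(K)$, so the verification is a finite, automatable check over the 92 entries of the dataset: extract the two $a$-slices, compute the relevant max and min, and confirm the inequality. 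I would implement this as a short script on top of the raw data of \cite{NS} and report the outcome; the example in Figure \ref{11n_80}, where the $\Delta = 0$ slice sits strictly to the right of (and four columns beyond) the $\Delta = -2$ slice, suggests this gap condition is comfortably satisfied in the typical picture.

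The only potential obstacle is the possibility that one or two knots in the dataset violate the gap condition, in which case Lemma \ref{lem: d1 standard}(b) does not directly apply. If that happens, I would fall back on finer structure: the anticommutation of $d_1$ with the higher differentials, the symmetry exchanging $d_1$ and $d_{-1}$ from Theorem \ref{thm: intro diff relations}, and the $\sll(2)$-action of \cite{GHM} (which preserves $\Delta$ and $a$), to further constrain and rule out a nonzero $d_1^{(2)}$ on a knot-by-knot basis. However, I expect the straightforward numerical check to succeed for all 92 knots, yielding the proposition without any case-specific refinement.
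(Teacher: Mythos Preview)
Your plan is correct in structure but rests on a false expectation: the gap inequality of Lemma \ref{lem: d1 standard}(b) does \emph{not} hold for all 92 thick knots. In the paper's computer check it fails for 17 of them, so the ``straightforward numerical check'' you hope will close the argument does not. Your fallback paragraph gestures at $\sll(2)$-symmetry and anticommutation but does not supply a concrete mechanism, and in fact those tools are not what the paper uses to finish.

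The missing idea is an elementary parity/rank argument. For each exceptional knot one inspects the homology to see that a nonzero $d_1^{(2)}$ could have rank at most $1$, and then restricts to the fixed value of $q_{\sll(1)}=q+a$ through which that potential arrow passes. This slice splits as $A\oplus B$ in the two $\Delta$-gradings; $d_1$ preserves each summand and $d_1^{(2)}$ would go from $B$ to $A$. In all but one case $\dim A$ and $\dim B$ are even, so $H(A,d_1)$ and $H(B,d_1)$ are even-dimensional; a rank-$1$ map between them then has odd-dimensional kernel and cokernel, forcing the $E_\infty$ page in that $q_{\sll(1)}$-grading to have dimension at least $2$, contradicting one-dimensionality of $H_{\sll(1)}$. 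The single remaining knot ($11n_{155}$) requires a short direct inspection of the ranks along one anti-diagonal. None of this uses the $\sll(2)$-action or the $d_1/d_{-1}$ symmetry you proposed; the decisive input is just that the $\sll(1)$ spectral sequence collapses to $\Q$.
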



\begin{proof}
We checked using computer that all knots in \cite{NS} satisfy the assumptions of Lemma \ref{lem: d1 standard}(b) with the following 17 exceptions:
$$
10_{128},\
10_{136},\
11n_{104},\
11n_{12},\
11n_{126},\
11n_{133},\
11n_{145},\
11n_{155},\
11n_{16},\
11n_{20},\
$$
$$
11n_{39},\
11n_{45},\
11n_{57},\
11n_{61},\
11n_{64},\
11n_{79},\
11n_{9}.
$$
Recall that $d_1^{(2)}$ changes the degrees by $(a,q,\Delta)\to (a-4,q+4,\Delta-2)$, so by looking at $q$- and $a$-degrees we can exclude $10_{136},11n_{12},11n_{20},11n_{79}$ (see Section \ref{sec: appendix} for details).

We show the triply graded homology for all remaining  exceptions in  Section \ref{sec: appendix}. By direct inspection, we see that if a potential differential $d_1^{(2)}$ is nonzero then it has rank 1 and acts from the grading $(q_K,a_K,\Delta+2)$ to $(q_K+4,a_K-4,\Delta)$, where $q_K,a_K$ are determined by the knot and listed in the following table:

\begin{center}
\begin{tabular}{|c|c|c|c|c|c|c|c|c|}
\hline
$K$ & $10_{128}$ & $11n_{9}$ & $11n_{16}$ & $11n_{39}$ &  $11n_{45}$ & $11n_{57}$ & $11n_{61}$ & $11n_{64}$ \\
\hline 
$(q_K,a_K)$ & (-4,10) & (-4,8) & (-4,8) & (-4,2) & (-4,2) & (-4,8) & (-4,6) & (-4,6) \\
\hline
$K$ & $11n_{104}$ & $11n_{126}$ &
$11n_{133}$ & $11n_{145}$ & $11n_{155}$  & & &   \\
\hline
$(q_K,a_K)$ & (-4,8) & (-4,10) & (-4,6) & (-4,2) & (-4,4)   & & &\\
\hline
\end{tabular}
\end{center}

Consider the direct sum of triply graded homology with $q_{\sll(1)}=q+a=q_K+a_K$, it breaks into two pieces: $A$ in delta grading $\Delta$ and $B$ in delta grading $\Delta+2$. The differential $d_1=d_1^{(1)}$ preserves both $A$ and $B$ while $d_1^{(2)}$ potentially acts from $B$ to $A$. 

In all exceptional cases but $11n_{155}$, the dimensions of $A$ and $B$ are even (and, in fact, $\chi(A)=\chi(B)=0$), so 
the dimensions of both $H(A,d_1)$ and $H(B,d_1)$ are even as well. If $d_1^{(2)}$ has rank 1 then it must have odd-dimensional kernel and cokernel, and the dimension of the $E_{\infty}$ page with $q_{\sll(1)}=q_K+a_K$ is at least 2. Contradiction.

Finally, for $11n_{155}$ the ranks of graded components of  $A$ and $B$ have the following form:
\begin{center}
\begin{tikzpicture}[scale=0.5]
\draw (0,2) node {$1$};
\draw (2,0) node {$1$};
\draw (6,4) node {$1$};
\draw (8,2) node {$5$};
\draw (10,0) node {$5$};
\draw (12,-2) node {$2$};
\draw (-0.5,2)--(0,2.5)--(2.5,0)--(2,-0.5)--(-0.5,2);
\draw (5.5,4)--(6,4.5)--(12.5,-2)--(12,-2.5)--(5.5,4);
\draw (1,-1) node {$A (\Delta=0)$};
\draw (11,-3) node {$B (\Delta=2)$};
\draw [->,dashed] (5.7,3.7)--(2.3,0.3);
\end{tikzpicture}
\end{center}
The differential $d_1$ acts in southeast direction for both $A$ and $B$, and the potential higher differential $d_1^{(2)}$ is shown by the  dashed arrow. If $d_1^{(2)}\neq 0$ then it is easy to see that the rank of homology at the $E_{\infty}$ page is at least $3$, contradiction. 

\end{proof}

\begin{lemma}
\label{lem: S determined}
Assume that $K$ is $d_1$-standard. Then the HOMFLY-PT $S$-invariant and the ranks of $d_1$ in each trigrading are completely determined by $\HHH(K)$.
\end{lemma}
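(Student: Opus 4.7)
The plan is to use the $d_1$-standard hypothesis to collapse the $\sll(1)$ spectral sequence at the $E_2$ page, then extract $S$ from $\HHH(K)$ via a parity argument, and finally recover the rank of $d_1$ in each trigrading by an inductive dimension count.

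First, since $K$ is $d_1$-standard, all higher differentials $d_1^{(i)}$ with $i\ge 2$ vanish, so $E_2=E_\infty$ in the spectral sequence of Corollary~\ref{cor: def S}. Hence $H(\HHH(K),d_1)\simeq\Q$ is one-dimensional and concentrated in the trigrading $(q,a,t)=(S,-S,-S)$. Because $d_1$ shifts $(q,a,t)\to(q+2,a-2,t)$, it preserves both $q_{\sll(1)}:=q+a$ and $t$, so $\HHH(K)$ splits as a direct sum of $q$-graded subcomplexes
$$
C(m,t)=\bigoplus_q\HHH^{q,m-q,t}(K),
$$
and $d_1$ acts on each $C(m,t)$ with degree $+2$ in $q$.

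Next I would extract $S$ using a parity argument. Any finite-dimensional chain complex $(C,d)$ satisfies $\dim C\equiv\dim H(C,d)\pmod 2$, since $\dim C-\dim H=2\dim\Imm(d)$. Applied to each sector $C(m,t)$, this forces $\dim C(m,t)$ to be even except at $(m,t)=(0,-S)$, where it must be odd. Thus $S$ is recovered from $\HHH(K)$ as $S=-t_0$, where $t_0$ is the unique $t$ for which $\sum_q\dim\HHH^{q,-q,t}(K)$ is odd.

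With $S$ in hand, the graded dimensions of $H(\HHH(K),d_1)$ are now pinned down in every trigrading: equal to $1$ at $(S,-S,-S)$ and $0$ elsewhere. Within each sector $C(m,t)$, the ranks $r_q:=\mathrm{rank}\bigl(d_1\colon\HHH^{q,m-q,t}\to\HHH^{q+2,m-q-2,t}\bigr)$ satisfy $\dim H_q=\dim\HHH^{q,m-q,t}(K)-r_q-r_{q-2}$, and since $r_q=0$ below the support of the sector, these ranks are solved inductively in $q$ from data purely intrinsic to $\HHH(K)$. The only conceptual step is the parity observation locating $S$; the rest is routine linear algebra, and the $d_1$-standard hypothesis is exactly what is needed to identify $H(\HHH(K),d_1)$ with the $E_\infty$ page.
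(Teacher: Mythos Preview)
Your proof is correct and follows essentially the same approach as the paper's. The only cosmetic differences are that the paper decomposes by the single $\Delta$-grading and invokes the Euler characteristic of each $\Delta$-slice to locate $S$, whereas you use the finer $(q+a,t)$-decomposition and a dimension-parity argument; since $\chi\equiv\dim\pmod 2$ and the surviving generator sits at $q+a=0$, both methods pin down the same number $-S$, and your explicit inductive recovery of the ranks $r_q$ just spells out what the paper leaves implicit.
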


\begin{proof}
By Corollary \ref{cor: def S}
 the $\sll(1)$ spectral sequence converges to the $E_{\infty}$ page which is 1-dimensional and supported in   $\Delta$-grading $-S$.

Since $K$ is $d_1$-standard and the differential $d_1$ preserves $\Delta$-grading, for $\Delta\neq -S$ it is acyclic and for $\Delta=-S$ it has 1-dimensional homology. By computing the Euler characteristic for each $\Delta$-grading, we compute $S$. This determines the position of 1-dimensional homology of $d_1$, and $d_1$ is acyclic on the complement, so its ranks are completely determined by $\HHH(K)$.
\end{proof}

\begin{corollary}\label{cor: S determined}
For all knots $K$ in the dataset \cite{NS}:
\begin{enumerate}[label=\alph*)]
\item the HOMFLY-PT $S$-invariant and the ranks of $d_1$ at every trigrading are completely determined by $\HHH(K)$.

\item the  invariant $s_{x^N-x}$ is completely determined by $\HHH(K)$ for $N\ge 3$.
\end{enumerate}
\end{corollary}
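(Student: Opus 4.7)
The plan is to assemble the corollary directly from three earlier results: Proposition \ref{prop: d1 exceptions}, Lemma \ref{lem: S determined}, and Proposition \ref{prop: N large} (together with the dataset statistics in Table \ref{tab: delta distribution}). There should be essentially no new content, and the step that carries any real weight has already been done, namely the case-by-case verification in Proposition \ref{prop: d1 exceptions} that the sporadic exceptions to Lemma \ref{lem: d1 standard}(b) are nevertheless $d_1$-standard.

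For part (a), I would simply invoke Proposition \ref{prop: d1 exceptions} to know that every knot $K$ in the Nakagane--Sano dataset is $d_1$-standard, and then apply Lemma \ref{lem: S determined}. That lemma already gives the two conclusions of (a) in one shot: the $S$-invariant is read off as the unique $\Delta$-grading in which the Euler characteristic of $\HHH(K)$ (with respect to the bigrading collapsed via $q_{\sll(1)}=q+a$) is nonzero, and once $S$ is known the differential $d_1$ has one-dimensional homology in that grading and is acyclic elsewhere, which pins down its rank in every trigrading.

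For part (b), I would combine part (a) with Proposition \ref{prop: N large}. According to Table \ref{tab: delta distribution}, every knot in the dataset has $|\Delta| \le 2$, so for $N \ge 3$ we have $N > |\Delta|$ and Proposition \ref{prop: N large} yields
\[
s_{x^N - x}(K) = -\tfrac{1}{2} S(K).
\]
Since $S(K)$ is determined by $\HHH(K)$ by part (a), so is $s_{x^N-x}(K)$.

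There is no real obstacle: the heavy lifting (identifying the seventeen potential exceptions to Lemma \ref{lem: d1 standard}(b) and ruling out a nontrivial $d_1^{(2)}$ for each, plus the $\Delta$-grading argument of Proposition \ref{prop: N large}) is all upstream. The only thing to be careful about is to phrase (b) so the reader sees that it rests on the empirical input $|\Delta|(K) \le 2$ from Table \ref{tab: delta distribution}, rather than on some general bound, since otherwise the hypothesis $N > |\Delta|$ of Proposition \ref{prop: N large} would not be available uniformly in $N \ge 3$.
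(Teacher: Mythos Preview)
Your proposal is correct and matches the paper's own proof essentially line for line: part (a) is Proposition \ref{prop: d1 exceptions} plus Lemma \ref{lem: S determined}, and part (b) is Proposition \ref{prop: N large} together with the observation $|\Delta|(K)\le 2$ for all knots in the dataset. The paper's proof is even terser than yours, but the logic is identical.
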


\begin{proof}
Part (a) is immediate from Proposition \ref{prop: d1 exceptions} and Lemma \ref{lem: S determined}. Part (b) follows from Proposition \ref{prop: N large}.
\end{proof}

\begin{example}
As a warning to the reader, the HOMFLY-PT $S$-invariant is not necessarily equal to the signature if $K$ is not thin. For example, for the knot $9_{42}$ the $S$-invariant equals 0 while the signature equals 2.
\end{example}

 For $N=2$ the situation is more complicated, see Table \ref{tab: S and s}. Still, for all but two knots in the dataset the HOMFLY-PT and $\sll(2)$ invariants agree up to sign. We show some examples of computations of $d_2$ in Section \ref{sec: examples}.

\begin{figure}
    \begin{center}
        \includegraphics[width=\textwidth]{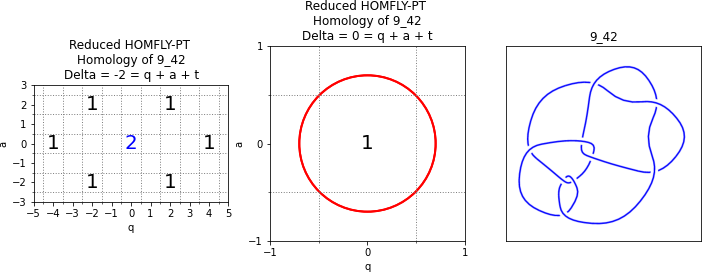}
    \end{center}
    \caption{Triply graded homology of the knot $K=\textnormal{$9_{42}$}$. This is an example of a knot whose $S$ invariant, $0$, is not equal to the negative of its signature, $2$. 
    }
    \label{fig: 9 42}
\end{figure}

\section{Differentials, symmetry and $\sll(2)$ action}
\label{sec: theory}

In \cite{GHM} the first author, Hogancamp and Mellit defined an action of the Lie algebra $\sll(2)$ on HOMLFY homology. We review its definition in Section \ref{sec: sl2 definition}, and prove some of its new properties in Section \ref{sec: sl2 properties}. These allow us to understand better the structure of HOMFLY-PT homology.

First, we review in detail the construction of HOMFLY-PT homology and its $y$-ification, and the construction of spectral sequences to $\sll(N)$ homology both for HOMFLY-PT and $y$-ified homology.

\subsection{Soergel bimodules and Rouquier complexes}

Let $R=\C[x_1,\ldots,x_n]$ be the polynomial ring in $n$ variables. It is graded by $\deg(x_i)=q^2$. To any $n$-strand braid we will associate a complex of $R$-$R$ bimodules, where the two actions of $R$ correspond, respectively, to the marked points on the bottom and top of the braid.  Consider a bimodule
$$
B_i:=R\bigotimes_{R^{(i\ i+1)}}R=\frac{\C[x_1,\ldots,x_n,x'_1,\ldots,x'_n]}{x_i+x_{i+1}=x'_i+x'_{i+1},x_ix_{i+1}=x'_ix'_{i+1},x_j=x'_j\quad (j\neq i,i+1)}
$$
The {\em Rouquier complexes} associated to single crossings are given by 
$$
T_i=[B_i\xrightarrow{b_i} R],\quad T_i^{-1}=[qR\xrightarrow{b_i^{*}} B_i]
$$
where $b_i:B_i\to R$ and $b_i^{*}:qR\to B_i$ are morphisms of $R$-$R$ bimodules which send $1\mapsto 1$ and $1\mapsto x_i-x'_{i+1}$, respectively. Note that the bimodules $B_i$ are graded, and the differentials in $T_i^{\pm}$ preserve the grading. 

\begin{theorem}[\cite{Rouquier}]
The complexes $T_i,T_i^{-1}$ satisfy braid relations up to homotopy:
$$
T_i\otimes T_i^{-1}\simeq R,\quad T_i\otimes T_{i+1}\otimes T_{i}\simeq T_{i+1}\otimes T_i\otimes T_{i+1},\quad T_i\otimes T_j\simeq T_j\otimes T_i\quad (|i-j|\ge 2).
$$
The tensor product is considered over $R$.
\end{theorem}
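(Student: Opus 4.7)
The plan is to verify each of the three relations by direct computation with the two-term Rouquier complexes, reducing the total complexes of their tensor products to the required form via Gaussian elimination (cancellation of contractible direct summands). The essential algebraic inputs are the Soergel bimodule decompositions in ranks $1$ and $2$.

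I would dispatch the two easy relations first. For the invertibility $T_i\otimes T_i^{-1}\simeq R$, the total complex is three-term with middle layer involving $B_i\otimes_R B_i$; the rank-$1$ Soergel identity splits this bimodule as a sum of two shifted copies of $B_i$, which pair off with the $B_i$-summands in the outer layers via the Rouquier differentials $b_i$ and $b_i^{*}$, leaving only $R$ after Gaussian elimination. The far commutation $T_iT_j\simeq T_jT_i$ for $|i-j|\ge 2$ is even simpler: the bimodules $B_i$ and $B_j$ commute on the nose over $R$ since they are built from disjoint pairs of variables, so the total complexes on both sides are strictly isomorphic, not merely homotopy equivalent.

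The main obstacle is the braid relation $T_iT_{i+1}T_i\simeq T_{i+1}T_iT_{i+1}$. Both sides expand into four-term total complexes whose eight tensor-product summands involve $R$, $B_i$, $B_{i+1}$, and their iterated products over $R$. The key structural input is the rank-$2$ Soergel decomposition
$$
B_i\otimes_R B_{i+1}\otimes_R B_i \;\cong\; B_{w_0}\oplus B_i,
$$
where $B_{w_0}$ is the indecomposable Soergel bimodule attached to the longest element $w_0=s_is_{i+1}s_i=s_{i+1}s_is_{i+1}$ of the parabolic subgroup $\langle s_i,s_{i+1}\rangle$ of the symmetric group. The symmetry of $B_{w_0}$ under the swap $i\leftrightarrow i+1$ is precisely what forces both triple Rouquier complexes to match; combined with the rank-$1$ splittings of $B_i\otimes_R B_i$ and $B_{i+1}\otimes_R B_{i+1}$ appearing in the other summands, one identifies the eight terms on each side and cancels the many contractible pairs through several rounds of Gaussian elimination. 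The hard part is the bookkeeping --- tracking grading shifts, signs, and the induced differentials through the cancellations --- while the conceptual content is entirely concentrated in the rank-$2$ Soergel decomposition above.
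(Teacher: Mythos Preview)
The paper does not supply its own proof of this theorem: it is stated with a citation to Rouquier and used as a black box. Your sketch is the standard argument found in the literature (Rouquier's original note, and the many subsequent expositions of Soergel bimodule categorifications), so there is nothing to compare against in the paper itself. The outline is correct in spirit: the rank-$1$ relation $B_i\otimes_R B_i\cong qB_i\oplus q^{-1}B_i$ handles invertibility, far commutation is immediate, and the rank-$2$ decomposition $B_iB_{i+1}B_i\cong B_{w_0}\oplus B_i$ (together with its $i\leftrightarrow i+1$ twin) is exactly the structural input for the braid relation. One small caveat: you should also check $T_i^{-1}\otimes T_i\simeq R$, not just $T_i\otimes T_i^{-1}\simeq R$, though the argument is symmetric. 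Otherwise the bookkeeping you flag (grading shifts and induced differentials after cancellation) is the only real work, and it goes through as you describe.
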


As a consequence, one can associate a complex (known as Rouquier complex) of $R$-$R$ bimodules to an arbitrary braid $\beta=\sigma_{i_1}^{\epsilon_1}\cdots \sigma_{i_r}^{\epsilon_r}$ by   $T_{\beta}=T_{i_1}^{\epsilon_1}\cdots T_{i_r}^{\epsilon_r}$.
The following construction of ``dot-sliding homotopies" is well known, but we spell it out in detail it for the reader's convenience.

\begin{proposition}
\label{prop: def xi}
The left and right actions of $R$ on $T_{i}^{\pm}$ are homotopic up to the transposition $s_i=(i\ i+1)$. 
More specifically, for all $a$ there is a map $\xi_a:T_i^{\pm}\to T_i^{\pm}$ of homological degree $(-1)$ and $q$-degree 2, such that
$$
[d,\xi_a]=x_a-x'_{s(a)}
$$
\end{proposition}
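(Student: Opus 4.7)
The plan is to construct the homotopy $\xi_a$ explicitly in each of the three cases $a \notin \{i, i+1\}$, $a = i+1$, and $a = i$. For the first case, $s_i$ fixes $a$ and the defining relations of $B_i$ force $x_a = x'_a$, so $x_a - x'_{s(a)}$ already acts as $0$ and we may take $\xi_a = 0$. For $a = i+1$, the relation $x_i + x_{i+1} = x'_i + x'_{i+1}$ in $B_i$ gives $x_{i+1} - x'_i = -(x_i - x'_{i+1})$, so once $\xi_i$ is built we set $\xi_{i+1} := -\xi_i$. Everything therefore reduces to the case $a = i$.

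For $T_i = [B_i \xrightarrow{b_i} R]$, I will define $\xi_i \colon R \to B_i$ (of homological degree $-1$ and $q$-degree $2$) as the left $R$-linear extension of $\xi_i(1) := x_i - x'_{i+1} \in B_i$; for $T_i^{-1} = [qR \xrightarrow{b_i^*} B_i]$, the dual choice is $\xi_i := b_i \colon B_i \to qR$. In both cases, the relation $[d, \xi_i] = x_i - x'_{i+1}$ is checked termwise. On the $R$-summand of $T_i$, one gets $b_i(\xi_i(r)) = r\, b_i(x_i - x'_{i+1}) = r(x_i - x_{i+1})$, which is the correct action of $x_i - x'_{i+1}$ on $R$ (where primed and unprimed variables act identically). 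On the $qR$-summand of $T_i^{-1}$, the check is completely analogous, using $b_i(b_i^*(1)) = x_i - x_{i+1}$.

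The only genuinely nontrivial step is verifying the relation on the $B_i$-summand, which in both cases reduces to the identity
\[
b_i(\beta)\,(x_i - x'_{i+1}) \;=\; (x_i - x'_{i+1})\,\beta \qquad \text{in } B_i, \text{ for every } \beta \in B_i.
\]
Both sides are left $R$-linear in $\beta$, so it suffices to verify the identity on a set of left $R$-module generators of $B_i$. Taking $\{1, x'_{i+1}\}$ as such generators, the case $\beta = 1$ is trivial, and for $\beta = x'_{i+1}$ one expands $(x_i - x'_{i+1})\, x'_{i+1}$ and simplifies $(x'_{i+1})^2$ via the quadratic relation $(x'_{i+1})^2 = (x_i + x_{i+1})\, x'_{i+1} - x_i x_{i+1}$, which follows by combining the two defining relations of $B_i$. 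After cancellation one obtains $x_{i+1}(x_i - x'_{i+1}) = b_i(x'_{i+1})\,(x_i - x'_{i+1})$, as required.

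This quadratic manipulation inside $B_i$ is the main obstacle; the remaining content of the proof is bookkeeping regarding the module structures, the sign conventions for the graded commutator $[d, \xi_i] = d\xi_i + \xi_i d$ on an odd-degree map, and the $q$-degree $2$ shift, which is automatic since $x_i$ and $x'_{i+1}$ both have $q$-degree $2$.
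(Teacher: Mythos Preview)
Your proof is correct and follows essentially the same approach as the paper: your map $\xi_i\colon R\to B_i$, $1\mapsto x_i-x'_{i+1}$, is precisely the bimodule map $b_i^{*}$, and your choice $\xi_i=b_i$ for $T_i^{-1}$ and $\xi_{i+1}=-\xi_i$, $\xi_j=0$ for $j\notin\{i,i+1\}$ matches the paper verbatim. The only difference is that the paper states the key identity $b_i b_i^{*}=b_i^{*}b_i=x_i-x'_{i+1}$ without proof, whereas you carry out the verification of $b_i^{*}b_i=(x_i-x'_{i+1})\cdot$ on $B_i$ explicitly via the quadratic relation $(x'_{i+1})^2=(x_i+x_{i+1})x'_{i+1}-x_ix_{i+1}$.
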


\begin{proof}
For $T_i$ we define $\xi_i=b_i^{*}, \xi_{i+1}=-b_i^{*}$ and $\xi_j=0$ for $j\neq i,i+1$. For $T_i^{-1}$ we define $\xi_i=b_i, \xi_{i+1}=-b_i$ and $\xi_j=0$ for $j\neq i,i+1$. All the properties follow from the identity $b_ib_i^{*}=b_i^{*}b_i=x_i-x'_{i+1}$.
\end{proof}

\begin{corollary}
For any braid $\beta$ the left and right action of $R$ on the complex $T_{\beta}$ are homotopic, up to the permutation corresponding to $\beta$.
\end{corollary}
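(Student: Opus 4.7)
The plan is to induct on the braid length $r$, with Proposition \ref{prop: def xi} providing the base case $r=1$. Write $\beta = \sigma_{i_1}^{\epsilon_1}\cdots\sigma_{i_r}^{\epsilon_r}$ and let $s_k = s_{i_k}$ and $\beta_k = s_1 s_2 \cdots s_k \in S_n$ denote the partial products of the associated permutations, with $\beta_0 = \mathrm{id}$ and $\beta_r$ equal to the permutation of $\beta$. The goal is to produce, for each $a$, a degree $(-1)$ map $\Xi_a: T_\beta \to T_\beta$ of $q$-degree $2$ such that $[d, \Xi_a] = x_a - x'_{\beta_r(a)}$, where $d$ is the total differential on the tensor product $T_\beta = T_{i_1}^{\epsilon_1} \otimes_R \cdots \otimes_R T_{i_r}^{\epsilon_r}$.

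The construction is a telescoping sum. Tensoring over $R$ identifies the right $R$-action on $T_{i_k}^{\epsilon_k}$ with the left $R$-action on $T_{i_{k+1}}^{\epsilon_{k+1}}$, so the "inner" variables propagate through the tensor product. For each $k$, let $\xi^{(k)}_c$ denote the homotopy from Proposition \ref{prop: def xi} applied to the $k$-th tensor factor at parameter $c$, extended by the identity on all other tensor factors (with the appropriate Koszul sign governed by its homological degree $-1$). Define
$$
\Xi_a \;=\; \sum_{k=1}^{r} \xi^{(k)}_{\beta_{k-1}(a)}.
$$
On the $k$-th factor, Proposition \ref{prop: def xi} gives $[d, \xi^{(k)}_{\beta_{k-1}(a)}] = x_{\beta_{k-1}(a)} - x'_{s_k(\beta_{k-1}(a))} = x_{\beta_{k-1}(a)} - x'_{\beta_k(a)}$. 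Interpreting $x'_{\beta_k(a)}$ on the $k$-th factor as $x_{\beta_k(a)}$ on the $(k+1)$-th factor via the tensor identification, the sum telescopes to leave only the outermost terms: $[d, \Xi_a] = x_a - x'_{\beta_r(a)}$, which is exactly the claimed homotopy.

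The main bookkeeping step, and the only real obstacle, is making the Koszul signs work out so that the telescoping cancellation is clean. One must verify that when $\xi^{(k)}$ is extended to the full tensor product as $1 \otimes \cdots \otimes \xi^{(k)} \otimes \cdots \otimes 1$, the graded Leibniz rule for $d$ on the tensor product reproduces the single-factor commutator $[d_{(k)}, \xi^{(k)}_{\beta_{k-1}(a)}]$ on that slot and vanishes on all others. This is standard for homotopies in tensor products of complexes, but deserves a line of justification. Once verified, the inductive structure is automatic: breaking off the last factor $T_{i_r}^{\epsilon_r}$, the partial sum $\sum_{k=1}^{r-1} \xi^{(k)}_{\beta_{k-1}(a)}$ is (by induction) the homotopy for $T_{\beta\sigma_{i_r}^{-\epsilon_r}}$, and the final term $\xi^{(r)}_{\beta_{r-1}(a)}$ carries $x_{\beta_{r-1}(a)}$ across the last crossing to $x'_{\beta_r(a)}$. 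This completes the proof.
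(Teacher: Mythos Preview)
Your proof is correct and matches the paper's approach. The paper states this corollary without proof, treating it as immediate from the proposition, but later (in the $y$-ification section) explicitly records the formula $\xi_a^{\beta\gamma} = \xi_a^{\beta} + \xi_{v(a)}^{\gamma}$ where $v$ is the permutation of $\beta$, which is exactly your telescoping construction unwound inductively. One small caution: depending on the convention for composing permutations, the partial permutation tracking ``where strand $a$ sits after $k-1$ crossings'' may need to be written $s_{k-1}\cdots s_1$ rather than $s_1\cdots s_{k-1}$; this is purely notational and does not affect the argument.
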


Given an $R-R$ bimodule $M$, we can consider its {\bf Hochschild      
 homology} $\HH(M)$, defined as follows: consider an arbitrary free $R\otimes R$-resolution of $M$:
$$
\left[\ldots \rightarrow M_2\rightarrow M_1 \rightarrow M_0\right]\simeq M
$$
identify the left and right $R$-action on each $M_i$ by writing
$$
\left[\ldots \rightarrow M_2\bigotimes_{R\otimes R}R\rightarrow M_1\bigotimes_{R\otimes R}R \rightarrow M_0\otimes_{R\otimes R}R\right]
$$
and then take homology of the resulting complex of $R$-modules.

\begin{definition}
\label{def: HHH}
Let $\beta$ be an arbitrary braid.
The  triply graded homology $\HHH(\beta)$ is defined applying Hochschild homology to the Rouquier complex $T_{\beta}$, and computing the homology of the result:
$$
\overline{\HHH}(\beta):=H_*(\HH(T_{\beta}))
$$
where the functor $\HH(-)$ is applied to the bimodules in the complex $T_{\beta}$ term-wise. 
\end{definition}

\begin{remark}
To be precise, this defines $\HHH(\beta)$ up to an overall grading shift. In this section we will not need precise formulas for this shift, and refer to \cite{NS} for the precise shift in their grading conventions. 
\end{remark}

By \cite[Section 2.8]{RasDiff} 
the homology $\overline{\HHH}(\beta)$ is a free module over $\C[x_1+\ldots+x_n]$ and we can define {\bf reduced triply graded homology} as the quotient 
$$ 
\HHH(\beta)=\overline{\HHH}(\beta)/(x_1+\ldots+x_n)\overline{\HHH}(\beta).
$$ 
If $\beta$ closes to a knot then $\HHH(\beta)$ is finite dimensional, and this is indeed the link homology considered above.

In what follows we will need a variant of this construction due to Rasmussen \cite{RasDiff}. Given a braid $\beta=\sigma_{i_1}^{\epsilon_1}\cdots \sigma_{i_\ell}^{\epsilon_{\ell}}$, the associated Rouquier complex 
$T_{\beta}=T_{i_1}^{\epsilon_1}\cdots T_{i_\ell}^{\epsilon_{\ell}}$ is a tensor product of $\ell$ $R-R$ bimodules. As such, it carries an action of $\ell+1$ copies of $R$, which motivates the following definition.

\begin{definition}
Let $D=\widehat{\beta}$ be a knot diagram obtained by closing a braid $\beta$. We associate a variable $x_{i}^{(j)}$ to any edge in $D$, where $1\le i\le n, 0\le j\le \ell$ there are $(\ell+1)n$ variables in total. The {\bf reduced edge ring} $R_e$ is obtained as the quotient of $R[x_i^{(j)}]$ by the following three sets of linear relations:
\begin{itemize}
    \item[(a)] At each crossing $\sigma_{i_k}^{\epsilon_k}$ we have 
    $$
    x_{i_k}^{(k-1)}+x_{i_k+1}^{(k-1)}=x_{i_k}^{(k)}+x_{i_k+1}^{(k)},\quad x_j^{(k-1)}=x_j^{(k)}\quad (j\neq i_k,i_{k+1}).
    $$
    \item[(b)] The variables at the top and bottom are identified:
    $$
    x_i^{(0)}=x_i^{(\ell)}
    $$
    \item[(c)] The sum of variables at the bottom vanishes: $\sum_{i}x_i^{(0)}=0$.
    \end{itemize}
One can also consider the {\bf unreduced edge ring} using equations (a) and (b), but not (c).

\end{definition}
\begin{example}
Consider the braid $\beta=\sigma_1\sigma_2^{-1}\sigma_1\sigma_2^{-1}$ which closes to the figure-eight knot:

\begin{center}
\begin{tikzpicture}
\draw (0,1)..controls (0,0.5) and (1,0.5)..(1,0);
\draw [line width=5,white] (0,0)..controls (0,0.5) and (1,0.5)..(1,1);
\draw (0,0)..controls (0,0.5) and (1,0.5)..(1,1);
\draw (2,0)--(2,1);

\draw (1,1)..controls (1,1.5) and (2,1.5)..(2,2);
\draw [line width=5,white] (1,2)..controls (1,1.5) and (2,1.5)..(2,1);
\draw (1,2)..controls (1,1.5) and (2,1.5)..(2,1);
\draw (0,1)--(0,2);

\draw (0,3)..controls (0,2.5) and (1,2.5)..(1,2);
\draw [line width=5,white] (0,2)..controls (0,2.5) and (1,2.5)..(1,3);
\draw (0,2)..controls (0,2.5) and (1,2.5)..(1,3);
\draw (2,2)--(2,3);

\draw (1,3)..controls (1,3.5) and (2,3.5)..(2,4);
\draw [line width=5,white] (1,4)..controls (1,3.5) and (2,3.5)..(2,3);
\draw (1,4)..controls (1,3.5) and (2,3.5)..(2,3);
\draw (0,3)--(0,4);

\draw (2,4)..controls (2,4.3) and (3,4.3)..(3,4);
\draw (1,4)..controls (1,4.7) and (4,4.7)..(4,4);
\draw (0,4)..controls (0,5) and (5,5)..(5,4);

\draw (2,0)..controls (2,-0.3) and (3,-0.3)..(3,0);
\draw (1,0)..controls (1,-0.7) and (4,-0.7)..(4,0);
\draw (0,0)..controls (0,-1) and (5,-1)..(5,0);

\draw (3,0)--(3,4);
\draw (4,0)--(4,4);
\draw (5,0)--(5,4);

\draw (-0.1,0)--(0.1,0);
\draw (0.9,0)--(1.1,0);
\draw (1.9,0)--(2.1,0);
\draw (-0.1,1)--(0.1,1);
\draw (0.9,1)--(1.1,1);
\draw (1.9,1)--(2.1,1);
\draw (-0.1,2)--(0.1,2);
\draw (0.9,2)--(1.1,2);
\draw (1.9,2)--(2.1,2);
\draw (-0.1,3)--(0.1,3);
\draw (0.9,3)--(1.1,3);
\draw (1.9,3)--(2.1,3);
\draw (-0.1,4)--(0.1,4);
\draw (0.9,4)--(1.1,4);
\draw (1.9,4)--(2.1,4);

\draw (-0.4,0) node {$x_1^{(0)}$};
\draw (0.6,0) node {$x_2^{(0)}$};
\draw (1.6,0) node {$x_3^{(0)}$};
\draw (-0.4,1) node {$x_1^{(1)}$};
\draw (0.6,1) node {$x_2^{(1)}$};
\draw (1.6,1) node {$x_3^{(1)}$};
\draw (-0.4,2) node {$x_1^{(2)}$};
\draw (0.6,2) node {$x_2^{(2)}$};
\draw (1.6,2) node {$x_3^{(2)}$};\
\draw (-0.4,3) node {$x_1^{(3)}$};
\draw (0.6,3) node {$x_2^{(3)}$};
\draw (1.6,3) node {$x_3^{(3)}$};
\draw (-0.4,4) node {$x_1^{(4)}$};
\draw (0.6,4) node {$x_2^{(4)}$};
\draw (1.6,4) node {$x_3^{(4)}$};
\end{tikzpicture}
\end{center}

The relations in the edge ring are
$$
x_1^{(0)}+x_2^{(0)}=x_1^{(1)}+x_2^{(1)},\ x_2^{(1)}+x_3^{(1)}=x_2^{(2)}+x_3^{(2)},\
x_1^{(2)}+x_2^{(2)}=x_1^{(3)}+x_2^{(3)},\ x_2^{(3)}+x_3^{(3)}=x_2^{(4)}+x_3^{(4)},\
$$
$$
x_3^{(0)}=x_3^{(1)},x_1^{(1)}=x_1^{(2)},
x_3^{(2)}=x_3^{(3)},x_1^{(3)}=x_1^{(4)},
x_1^{(0)}=x_1^{(4)},x_2^{(0)}=x_2^{(4)},x_3^{(0)}=x_3^{(4)},
x_1^{(0)}+x_2^{(0)}+x_3^{(0)}=0.
$$
\end{example}


Note that equations (a) and (c) imply that $\sum_{i}x_i^{(j)}=0$ for all $j$.
We also define the {\bf local edge ring} as 
$$
R_{loc,i}:=\frac{\C[x_1,\ldots,x_n,x'_1,\ldots,x'_n]}{x_i+x_{i+1}=x'_i+x'_{i+1},\quad x_j=x'_j\quad (j\neq i,i+1)}.
$$
It is clear that the relations (a) define a tensor product of $R_{loc,i_{k}}$ over  appropriate polynomial rings, while $R_e$ is obtained as a quotient of this product by the additional relations (b) and (c).

\begin{lemma}
a) The $R$-$R$ bimodules $R$ and $B_i$ have the following resolutions over $R_{loc,i}$:
\begin{equation}
\label{eq: res R}
R\simeq \left[R_{loc,i}\xrightarrow{x_i-x'_i} R_{loc,i}\right],
\end{equation}
\begin{equation}
\label{eq: res B}
B_i\simeq \left[R_{loc,i}\xrightarrow{(x'_i-x_i)(x'_i-x_{i+1})} R_{loc,i}\right].
\end{equation}
b) The morphisms $b_i:B_i\to R$ and $b_i^*:R\to B_i$ lift to the following morphisms of resolutions:
\begin{equation}
\label{eq: Rouquier resolved}
b_i\simeq
\begin{tikzcd}
 R_{loc,i}\arrow{rr}{x_i-x'_i} & & R_{loc,i}\\
 R_{loc,i}\arrow{rr}{(x'_i-x_i)(x'_i-x_{i+1})} \arrow{u}{x_{i+1}-x'_i}& & R_{loc,i} \arrow{u}{1}\\
\end{tikzcd}
\quad 
\quad
b_i^*\simeq 
\begin{tikzcd}
 R_{loc,i}\arrow{rr}{(x'_i-x_i)(x'_i-x_{i+1})} & & R_{loc,i}\\
 R_{loc,i}\arrow{rr}{x_i-x'_i} \arrow{u}{1}& & R_{loc,i} \arrow{u}{x_{i+1}-x'_i}\\
\end{tikzcd}
\end{equation}
\end{lemma}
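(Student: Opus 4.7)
The plan is to recognize both complexes as Koszul (length-one free) resolutions by a single non-zero-divisor in $R_{loc,i}$, and then to verify chain-map commutativity directly for part (b).

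For part (a), I would first identify $R_{loc,i}$ explicitly. The defining relations of $R_{loc,i}$ let us solve $x'_{i+1}=x_i+x_{i+1}-x'_i$, so $R_{loc,i}\cong\C[x_1,\dots,x_n,x'_i]$ is a polynomial ring; in particular it is an integral domain. For the first resolution, observe that as an $R$-$R$ bimodule $R$ corresponds to identifying $x_j=x'_j$ for all $j$; modulo the local relations this is equivalent to imposing only $x_i=x'_i$ (since then $x_{i+1}=x'_{i+1}$ follows). Thus $R\cong R_{loc,i}/(x_i-x'_i)$, and since $x_i-x'_i$ is a non-zero-divisor in the polynomial ring $R_{loc,i}$, the two-term complex \eqref{eq: res R} is a free resolution. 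For the second resolution, the bimodule $B_i$ is $R_{loc,i}$ modulo the one extra relation $x_ix_{i+1}=x'_ix'_{i+1}$. Substituting $x'_{i+1}=x_i+x_{i+1}-x'_i$ gives the key identity
$$x_ix_{i+1}-x'_ix'_{i+1}=(x_i-x'_i)(x_{i+1}-x'_i)=(x'_i-x_i)(x'_i-x_{i+1})$$
in $R_{loc,i}$. Since this element is a product of two non-zero-divisors, it is itself a non-zero-divisor, so \eqref{eq: res B} is a free resolution of $B_i$.

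For part (b), I would verify each of the two diagrams in \eqref{eq: Rouquier resolved} in turn. For $b_i$, the commutativity square requires
$$(x_i-x'_i)(x_{i+1}-x'_i)=1\cdot(x'_i-x_i)(x'_i-x_{i+1}),$$
which is precisely the identity computed above. On $H_0$ the lift induces the map $R_{loc,i}/((x'_i-x_i)(x'_i-x_{i+1}))\to R_{loc,i}/(x_i-x'_i)$, and I would check it reduces modulo $(x_i-x'_i)$ to multiplication by $x_{i+1}-x'_i=x_{i+1}-x_i$; this agrees with $b_i(1)=1$ up to the standard normalization (or a unit) coming from the conventions of the Rouquier differential. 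For $b_i^*$, the commutativity square is the same identity read the other way, and on $H_0$ the induced map sends $1\in R$ to $x_{i+1}-x'_i\in B_i$, which coincides with $b_i^*(1)=x_i-x'_{i+1}$ in $B_i$ via the relation $x_i+x_{i+1}=x'_i+x'_{i+1}$ (up to sign, absorbed in the convention).

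The only potential obstacle is the bookkeeping of signs and of the precise normalization of $b_i$ and $b_i^*$; these depend on the sign conventions for the Rouquier differentials stated earlier. Since both lifts have been chosen to satisfy the same commutativity identity, no nontrivial calculation is needed beyond the single factorization displayed above, and the lemma follows.
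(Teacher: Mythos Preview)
Your approach is essentially the paper's: identify $R$ and $B_i$ as quotients of the polynomial ring $R_{loc,i}$ by a single non-zero-divisor, and then verify the squares in \eqref{eq: Rouquier resolved} commute. Part (a) is correct and in fact slightly more explicit than the paper (which just observes $(x'_i-x_i)(x'_i-x_{i+1})=0$ in $B_i$ via symmetric functions, without writing out the factorization of $x_ix_{i+1}-x'_ix'_{i+1}$).

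In part (b) there is a small but genuine confusion. For a two-term free resolution $[P_1\to P_0]$ of $M$, the induced map on $H_0=M$ is read off from the \emph{degree-zero} (rightmost) vertical arrow, not the leftmost one. In the $b_i$ diagram the right vertical arrow is $1$, so the induced map $B_i\to R$ sends $1\mapsto 1$, which is exactly $b_i$ on the nose --- no ``normalization or unit'' is needed. Your claim that the induced map is multiplication by $x_{i+1}-x'_i$ comes from looking at the wrong column. For $b_i^*$ you read the correct (right) arrow $x_{i+1}-x'_i$, and your observation that $x_i-x'_{i+1}=-(x_{i+1}-x'_i)$ in $R_{loc,i}$ is accurate; the sign discrepancy is real but harmless for everything that follows. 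Once you fix the $b_i$ reading, the verification is complete and matches the paper's one-line justification that ``the diagrams commute.''
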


\begin{proof}
For \eqref{eq: res R}, note that $x_i-x'_i=0$ is equivalent to $x_{i+1}-x'_{i+1}=0$ in $R_{loc,i}$. For \eqref{eq: res B}, note that for any symmetric function $\varphi$ in two variables we have $\varphi(x_i,x_{i+1})|_{B_i}=\varphi(x'_i,x'_{i+1})|_{B_i}$. In particular, 
$(x'_i-x_i)(x'_i-x_{i+1})=(x'_i-x'_i)(x'_{i}-x'_{i+1})=0$ in $B_i$, so we can write
$$
B_{i}=\frac{\C[x_1,\ldots,x_n,x'_1,\ldots,x'_n]}{x_i+x_{i+1}-x'_{i}-x'_{i+1}=0,\ (x'_i-x_i)(x'_i-x_{i+1})=0, \quad x_j=x'_j\quad (j\neq i,i+1)}=$$ $$\frac{R_{loc,i}}{(x'_i-x_i)(x'_i-x_{i+1})=0}
$$
and the result follows. 

For part (b) note that the diagrams commute, so we get chain maps of resolutions which induce the desired maps on homology.
\end{proof}

\begin{lemma}
\label{lem: Rasmussen HHH}
a) The tensor product of resolutions \eqref{eq: res B} for $B_{i_1},\ldots, B_{i_\ell}$
yields a resolution of $B_{i_1}\otimes \cdots B_{i_\ell}$ over the tensor product of $R_{loc,i_k}$.

b) The reduced HOMFLY-PT homology $\HHH(\beta)$ can be computed as follows. First, tensor the cones of $b_i$ and $b_i^{*}$ given by \eqref{eq: Rouquier resolved}. Second, impose the relations (b),(c) from definition of $R_e$, this  gives a bicomplex of $R_e$-modules. Take homology of the  horizontal differential, then homology of the vertical differential.
\end{lemma}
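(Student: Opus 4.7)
The plan is to prove both parts by reducing to standard homological algebra with Koszul resolutions on polynomial rings.

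For part (a), the key observation is that each $R_{loc,i_k}$ is a polynomial ring (in $n+1$ variables, after using the linear relations to eliminate $n-1$ dependent variables), and the tensor product $\bigotimes_k R_{loc,i_k}$ taken over the polynomial rings associated to the shared ``bottom'' edges is itself a polynomial ring (in $n+\ell$ variables). I would verify that the sequence of elements $u_k := (x_{i_k}^{(k)} - x_{i_k}^{(k-1)})(x_{i_k}^{(k)} - x_{i_k+1}^{(k-1)})$ is a regular sequence in this ring: after the relations at earlier crossings are imposed, $u_k$ is a monic polynomial of degree $2$ in the newly appearing variable $x_{i_k}^{(k)}$ (using the crossing relation (a) at level $k$ to express $x_{i_k+1}^{(k)}$ in terms of $x_{i_k}^{(k)}$ and earlier variables), hence a nonzerodivisor modulo $(u_1,\ldots,u_{k-1})$. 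By the Koszul resolution theorem, the tensor product of the length-$1$ resolutions \eqref{eq: res B} is then a free resolution of the quotient, which is exactly $B_{i_1}\otimes\cdots\otimes B_{i_\ell}$.

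For part (b), I would proceed in three steps. \emph{Step 1:} Combining part (a) with the analogous statement for \eqref{eq: res R}, the tensor product of the bicomplexes \eqref{eq: Rouquier resolved} over the edge polynomial rings yields a bicomplex $\widetilde{T}_\beta$ whose totalization is a free bimodule resolution of $T_\beta$; its horizontal direction assembles the Koszul resolutions at each crossing while its vertical direction carries the Rouquier chain maps $b_i$ and $b_i^*$. \emph{Step 2:} Reduced Hochschild homology is by definition the derived tensor product $T_\beta\otimes^L_{R\otimes R}R$ killed by $x_1+\cdots+x_n$, and by Step 1 this equals $\widetilde{T}_\beta\otimes_{R\otimes R}R/(x_1+\cdots+x_n)$. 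The tensor product with $R$ over $R\otimes R$ identifies the left and right $R$-actions, which in the edge-ring language means setting $x_i^{(0)}=x_i^{(\ell)}$; this is precisely relation (b). Likewise, quotienting by $x_1+\cdots+x_n$ corresponds to relation (c).

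\emph{Step 3:} After imposing (b) and (c), we obtain a bounded bicomplex of free $R_e$-modules whose total homology is $\HHH(\beta)$. The standard bounded bicomplex spectral sequence then computes this total homology either way; in particular, taking homology in the horizontal direction first and then in the vertical direction yields $\HHH(\beta)$, as claimed.

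The main obstacle I anticipate is the careful verification of the regular-sequence property in part (a), especially tracking how the crossing relations (a) constrain the shared edge variables at successive levels; once this is settled, the remainder reduces to routine manipulations with Hochschild homology of free bimodule resolutions and the spectral sequence of a bounded bicomplex.
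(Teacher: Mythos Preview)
Your approach to part (a) is correct but genuinely different from the paper's. The paper argues in one line: each $B_i$ is free as a left $R$-module and as a right $R$-module (since $R$ is free over $R^{(i\ i+1)}$), so the derived tensor product of the $B_{i_k}$ coincides with the ordinary tensor product, and hence the tensor product of resolutions resolves the tensor product of bimodules. Your regular-sequence argument is more hands-on and also works; your key observation that $u_k$ is monic of degree $2$ in the fresh variable $x_{i_k}^{(k)}$ over the subring generated by the lower-level variables is exactly what makes the Koszul complex exact. The flatness argument is slicker and avoids tracking variables, while yours has the advantage of being entirely self-contained.

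For part (b), Steps 1 and 2 match the paper's reasoning. Step 3, however, has a gap. You assert that the \emph{total} homology of the bicomplex equals $\HHH(\beta)$ and then invoke the bicomplex spectral sequence to conclude that horizontal-then-vertical homology also equals $\HHH(\beta)$. But the spectral sequence only gives you $E_2 \Rightarrow E_\infty$; it does not say $E_2 = E_\infty$, and you have not shown the spectral sequence degenerates. More to the point, the detour is unnecessary: by Definition~\ref{def: HHH}, $\HHH(\beta) = H_*(\HH(T_\beta))$ where $\HH$ is applied term-wise. Part (a) says the horizontal homology of your bicomplex \emph{is} $\HH$ applied term-wise, so ``horizontal then vertical'' is literally the definition of $\HHH(\beta)$. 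This is precisely how the paper concludes part (b). Drop the total-complex detour and your argument is complete.
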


\begin{proof}
a) It is well known that the bimodule $B_{i_1}\otimes \cdots B_{i_\ell}$ is free both as a left and right $R$-module (this follows from the fact that $R$ is free over $R^{(i\ i+1)}$). Therefore the derived tensor product of such bimodules is quasi-isomorphic to the usual tensor product.

b) By (a), to compute the Hochschild homology $\HH(B_{i_1}\otimes \cdots B_{i_\ell})$ it is sufficient to replace each $B_i$ by its resolution \eqref{eq: res B}, tensor these resolutions and identify the variables on the top and bottom. The rest follows from the definition of $\HHH(\beta)$.
\end{proof}

\subsection{Rasmussen spectral sequences}

Following \cite{RasDiff}, we deform the above construction of $\HHH$ to define a spectral sequence to $\sll(N)$ homology. Let us fix an integer $N$. We will need a more general version which depends on a one-variable polynomial (called the {\bf potential}) $W(x)$. The $\sll(N)$ homology corresponds to $W(x)=x^{N+1}$.

For the crossing $\sigma_i^{\epsilon}$ we define
$$
W_{i}:=W(x_i)+W(x_{i+1})-W(x'_i)-W(x'_{i+1})=\sum_{j=1}^{n}\left(W(x_j)-W(x'_j)\right)\in R_{loc,i}
$$
Observe that $W_i$ vanishes both when $x_i=x'_i$ (and $x_{i+1}=x'_{i+1}$ by definition of $R_{loc,i}$), and when $x_{i+1}=x'_i$ (and $x_{i}=x'_{i+1}$ by definition of $R_{loc,i}$).
Since $R_{loc,i}$ is a free polynomial ring and $(x_i-x'_i),(x_{i+1}-x'_i)$ are coprime in it, we conclude that $W_i$ is divisible by $(x_i-x'_i)(x_{i+1}-x'_i)$. We denote
$$
W'_{i}=\frac{W_{i}}{x_i-x'_i},\ W''_{i}=\frac{W_{i}}{(x_i-x'_i)(x_{i+1}-x'_i)}.
$$

One can replace the resolutions \eqref{eq: res R} and \eqref{eq: res B} by the following:
$$
\begin{tikzcd}
R_{loc,i}\arrow[bend left]{rr}{x_i-x'_i} & &  R_{loc,i} \arrow[bend left]{ll}{W'_{i}} &
R_{loc,i}\arrow[bend left]{rr}{(x'_i-x_i)(x'_i-x_{i+1})} & & R_{loc,i}\arrow[bend left]{ll}{W''_{i}}.
\end{tikzcd}
$$
Note that these are not complexes but {\bf matrix factorizations}. More precisely, if $d_{+}$ denotes the rightward differential (which agrees with the differentials in \eqref{eq: res R} and \eqref{eq: res B} respectively), and $d_{\partial W}$ denotes the leftward differential then
$$
(d_{+}+d_{\partial W})^2=W_{i}.
$$
The maps $b_i$ and $b_i^*$ from \eqref{eq: Rouquier resolved} are unchanged. One can then proceed with defining the complex as in Lemma \ref{lem: Rasmussen HHH}, replacing the horizontal differential with $d_{+}+d_{\partial W}$. Note that on the tensor product we have 
$$
(d_{+}+d_{\partial W})^2=\sum_{j=1}^{n}\left(W\left(x_j^{(0)}\right)-W\left(x^{(\ell)}_j\right)\right)=0\ \mathrm{in}\ R_e.
$$

Following \cite{LL}, we  label  the differential $d_{\partial W}$ and the corresponding homology $H_{\partial W}$ by the derivative $\partial W=\frac{dW}{dx}$ rather than by $W$. This is explained by the following:

\begin{example}
For a given potential $W$, the matrix factorization for a trivial braid on one strand is given by  
$$
\begin{tikzcd}
\C[x,x'] \arrow[bend left]{rr}{x-x'} & & \C[x,x'] \arrow[bend left]{ll}{\frac{W(x)-W(x')}{x-x'}}
\end{tikzcd}
$$
Since $\frac{W(x)-W(x')}{x-x'}|_{x=x'}=\partial W$, after closing the braid we get the complex:
$$
\begin{tikzcd}
\C[x] \arrow[bend left]{rr}{0} & & \C[x] \arrow[bend left]{ll}{\partial W}
\end{tikzcd}
$$

and the unreduced homology of unknot:
$$
H_{\partial W}(\mathrm{unknot})=\C[x]/\left(\partial W\right).
$$
\end{example}

\begin{theorem}[\cite{RasDiff}]
\label{thm: Rasmussen ss}
a) For any $W$, the homology $H_{\partial W}:=H(H(\CS(D),d_{+}+d_{\partial W}),d_v)$ is a link invariant. The differential $d_{\partial W}$ induces a spectral sequence from $\HHH(\beta)$ to $H_{\partial W}(\beta)$.

b) For $\partial W=x^{N}$, the homology $H_{\partial W}$ agrees with (reduced) $\sll(N)$ Khovanov-Rozansky homology $H_{\sll(N)}(\beta)$. The differential $d_{\partial W}$ and the corresponding spectral sequence agree with the differential $d_N$ and the spectral sequence from Proposition \ref{prop: differentials}.
\end{theorem}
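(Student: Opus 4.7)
The plan is to follow Rasmussen's original argument in \cite{RasDiff}. First, verify that $d_{+}+d_{\partial W}$ squares to zero on the bicomplex built from the resolutions in Lemma \ref{lem: Rasmussen HHH}: locally, the square at each crossing contributes $W_{i_k}$, and summing over $k$ telescopes to $\sum_j \bigl(W(x_j^{(0)})-W(x_j^{(\ell)})\bigr)$, which vanishes in $R_e$ by the identification (b) of top and bottom variables. The vertical differential $d_v$, built from the Rouquier maps $b_i, b_i^{*}$ resolved as in \eqref{eq: Rouquier resolved}, squares to zero and anticommutes with $d_{+}+d_{\partial W}$, so we have a well-defined bicomplex.

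For part (a), link invariance reduces to invariance under the Markov moves. The Rouquier homotopy equivalences $T_i\otimes T_i^{-1}\simeq R$ and the braid relation lift to the matrix factorization setting, since the explicit chain maps realizing them are polynomial and hence commute with $d_{\partial W}$; this handles Markov I together with braid-like Reidemeister II and III. Markov II (stabilization) is verified by a direct local computation extending the unknot example following the definition of $H_{\partial W}$. For the spectral sequence, I would filter the total complex by the $a$-grading: $d_{+}$ preserves this filtration while $d_{\partial W}$ strictly decreases it, so the $E_{1}$-page is computed using only $d_{+}$ and $d_v$, which by Lemma \ref{lem: Rasmussen HHH} gives $\HHH(\beta)$. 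The higher differentials are induced by $d_{\partial W}$, and convergence to $H_{\partial W}(\beta)$ follows from boundedness in each tridegree.

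For part (b), set $\partial W=x^{N}$ (i.e.\ $W=x^{N+1}/(N+1)$) and check that the local matrix factorizations agree, up to normalization, with those of Khovanov-Rozansky \cite{KR1}. It suffices to verify this on basic pieces: the identity strand yields unreduced unknot homology $\C[x]/(x^{N})$, matching the $\sll(N)$ invariant, and a single crossing reproduces the Khovanov-Rozansky tangle matrix factorization by direct computation. Braid and Markov invariance on both sides then force an isomorphism of link invariants. The grading identification $q_{\sll(N)}=q+Na$ is read off by comparing the weight of $d_{\partial W}$ (which shifts $a$ by $-2$ per application) against the native $q$-grading on the Khovanov-Rozansky complex, and the spectral sequence of Proposition \ref{prop: differentials} arises from exactly the same $a$-grading filtration, so it coincides with the one from part (a).

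The main obstacle is the identification in part (b): rigorously matching $H_{\partial W}$ for $\partial W=x^{N}$ with Khovanov-Rozansky $\sll(N)$ homology requires comparing tangle invariants term-by-term and checking that the chain maps realizing Reidemeister invariance on both sides agree up to homotopy, a substantial gluing argument carried out in detail in \cite{RasDiff} using the formalism of \cite{KR1}. A secondary subtlety is maintaining grading conventions consistent enough to canonically identify the filtration producing the $E_{1}=\HHH(\beta)$ page with the filtration implicit in Proposition \ref{prop: differentials}; this is bookkeeping but is essential for the statement that the two spectral sequences agree, not merely have isomorphic abutments.
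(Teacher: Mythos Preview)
The paper does not prove this theorem; it is quoted from \cite{RasDiff}. However, Remark \ref{rem: Rasmussen ss details} records the part of Rasmussen's argument that the paper actually needs later, and your sketch of the spectral sequence diverges from it in a way that leaves a real gap.

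Your plan is to filter ``the total complex'' by the $a$-grading and read off $\HHH(\beta)$ on $E_1$. There are two problems. First, on the chain level $d_{+}$ does \emph{not} preserve the horizontal (Hochschild/$a$-) grading: $d_{+}$ and $d_{\partial W}$ move in opposite horizontal directions in the matrix factorization, so your proposed $d_0=d_{+}+d_v$ does not arise from an $a$-filtration. Second, even granting some filtration with associated graded differential $d_{+}+d_v$, its $E_1$ page is $H(\CS,d_{+}+d_v)$, whereas Lemma \ref{lem: Rasmussen HHH} identifies $\HHH(\beta)$ with the iterated homology $H(H(\CS,d_{+}),d_v)$; and the abutment would be $H(\CS,d_{+}+d_{\partial W}+d_v)$, whereas $H_{\partial W}$ is by definition the iterated homology $H(H(\CS,d_{+}+d_{\partial W}),d_v)$. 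These iterated and total homologies need not coincide in general.

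Rasmussen's route, as summarized in Remark \ref{rem: Rasmussen ss details}, fixes exactly this. One first passes to $H^{+}=H(\CS,d_{+})$ and then uses the key structural fact (\cite[Corollary 5.9]{RasDiff}) that $H^{\pm}=H(H^{+},d_{\partial W})$ is supported in a \emph{single} horizontal grading. That fact forces the collapse of both auxiliary spectral sequences \eqref{eq: ss plus minus} and \eqref{eq: ss minus v}, yielding the chain of isomorphisms \eqref{eq: Rasmussen iso}; only then does the filtration of $(H^{+},d_{\partial W}+d_v)$ produce a spectral sequence with $E_1=\HHH(\beta)$ and abutment canonically identified with $H_{\partial W}(\beta)$. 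Your sketch omits the single-horizontal-degree input entirely, and without it the identification of both ends of the spectral sequence is unjustified. Your remarks on invariance and on part (b) are fine as a high-level outline.
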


\begin{remark}
\label{rem: Rasmussen ss details}
In what follows we will need some details of the proof of Theorem \ref{thm: Rasmussen ss} concerning the order of differentials defining spectral sequence which we recall now. First, we define 
$$
H^{+}(\CS)=H(\CS(D),d_{+}),\quad H^{\pm}(\CS)=H(H^+(\CS),d_{\partial W})=H(H(\CS(D),d_{+}),d_{\partial W}).
$$
By \cite[Corollary 5.9]{RasDiff} $H^{\pm}(\CS)$ is supported in a single horizontal grading. By \cite[Lemma 5.12]{RasDiff} this implies that the spectral sequence 
\begin{equation}
\label{eq: ss plus minus}
H^{\pm}(\CS)=H(H(\CS(D),d_{+}),d_{\partial W})\Rightarrow H(\CS(D),d_{+}+d_{\partial W})
\end{equation}
collapses and induces a canonical isomorphism between these homology groups. Furthermore, the spectral sequence 
\begin{equation}
\label{eq: ss minus v}
H(H^{\pm}(\CS),d_{v})=H(H(H^+(\CS),d_{\partial W}),d_{v})\Rightarrow H(H^+(\CS),d_{\partial W}+d_{v})
\end{equation}
collapses as well \cite[Proposition 5.10]{RasDiff} since the differential $d_{\partial W}$ changes the horizontal grading by $(-1)$, the differential $d_v$ preserves the horizontal grading and therefore higher differentials must increase the horizontal grading which is impossible by the above. 

To sum up, the collapse of spectral sequences \eqref{eq: ss plus minus} and \eqref{eq: ss minus v} implies a chain of isomorphisms
\begin{equation}
\label{eq: Rasmussen iso}
H_{\partial W}(\beta)\simeq H(H(\CS(D),d_{+}+d_{\partial W}),d_v)\simeq H(H^{\pm}(\CS),d_{v})\simeq H(H^+(\CS),d_{\partial W}+d_{v}).
\end{equation}
On the other hand, by Lemma \ref{lem: Rasmussen HHH} we have a spectral sequence
$$
\HHH(\beta)=H(H^+(\CS),d_{v})\Rightarrow H(H^+(\CS),d_{\partial W}+d_{v})\simeq H_{\partial W}(\beta).
$$
induced by $d_{\partial W}$.
\end{remark}

As above, for $\partial W=x^{N}$ we denote the differentials in the spectral sequence by $d_N^{(i)}$ and write $d_N=d_N^{(1)}$. By \cite[Corollary 5.6]{RasDiff}
the differentials $d_M,d_N$ anticommute for different $M,N$.

\begin{lemma}
\label{lem: xi commutes with slN}
The dot-sliding homotopies $\xi_i$ from Proposition \ref{prop: def xi} commute with both differentials $d_{+}$ and $d_{\partial W}$ above.
\end{lemma}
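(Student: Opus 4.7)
The plan is to reduce the claim to a single-crossing check and then verify a one-line identity in the matrix factorization.

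By Proposition \ref{prop: def xi}, on any single factor $T_i^{\pm}$ the homotopy $\xi_a$ vanishes unless $a\in\{i,i+1\}$, in which case it equals $\pm b_i^{*}$ (for $T_i$) or $\pm b_i$ (for $T_i^{-1}$). Since $d_{+}+d_{\partial W}$ acts as a (super)derivation on the tensor product defining $T_\beta$, and since the $\xi_a$'s on $T_\beta$ are built from these single-crossing maps by the same tensor-factor prescription, it suffices to check that the lifts of $b_i$ and $b_i^{*}$ recorded in \eqref{eq: Rouquier resolved} commute with both $d_{+}$ and $d_{\partial W}$ at a single crossing.

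Commutativity with $d_{+}$ is built into \eqref{eq: Rouquier resolved}: the rectangles of rightward arrows commute by construction, since those diagrams were written down precisely as chain maps of the two-term resolutions of $R$ and $B_i$. The substantive content is commutativity with the leftward differential $d_{\partial W}$. For the lift of $b_i^{*}$, reading off the relevant rectangle, this reduces to the single identity
$$
(x_{i+1}-x'_i)\cdot W''_{i} = W'_{i},
$$
which is immediate from the definitions $W'_{i}=W_i/(x_i-x'_i)$ and $W''_{i}=W_i/\bigl((x_i-x'_i)(x_{i+1}-x'_i)\bigr)$. The analogous rectangle for $b_i$ gives the same identity read backward.

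There is essentially no obstacle beyond bookkeeping: the entire content of the lemma is packaged in the divisibility relations that define $W'_i$ and $W''_i$. The only care needed is in tracking that the two vertical arrows in \eqref{eq: Rouquier resolved} are $1$ and $x_{i+1}-x'_i$ in the correct order, so that the displayed identity is exactly what the square demands, and in confirming that the super-Leibniz rule propagates the single-factor commutativity to the full $T_\beta$.
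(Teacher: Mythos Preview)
Your proof is correct and follows essentially the same route as the paper: both arguments lift $\xi_i$ at a single crossing to the morphisms $b_i$, $b_i^{*}$ of \eqref{eq: Rouquier resolved} and verify compatibility with the horizontal differentials there, the key identity being $(x_{i+1}-x'_i)\,W''_i = W'_i$. The paper records this by redrawing the two squares with both $d_{+}$ and $d_{\partial W}$ present and declaring the check ``easy to see'' (it also notes the relation $[d_v,\xi_i]=x_{i+1}-x'_i$, which goes beyond the lemma's statement), while you spell out the identity directly; the content is the same.
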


\begin{proof}
We can lift $\xi_i$ to the resolutions \eqref{eq: Rouquier resolved} as follows:
$$
\label{eq: Rouquier resolved xi}
\begin{tikzcd}
 R_{loc,i}\arrow[bend left]{rr}{x_i-x'_i} \arrow{ddd}[bend left]{1}& & R_{loc,i} \arrow[bend left]{ll}{W'_{i}} \arrow[bend left]{ddd}{x_{i+1}-x'_i}\\
  & & \\
    & & \\
 R_{loc,i}\arrow[bend left]{rr}{(x'_i-x_i)(x'_i-x_{i+1})} \arrow[bend left]{uuu}{x_{i+1}-x'_i}& & R_{loc,i} \arrow[bend left]{ll}{W''_{i,N}} \arrow{uuu}{1}\\
\end{tikzcd}
\quad 
\quad
\begin{tikzcd}
 R_{loc,i}\arrow[bend left]{rr}{(x'_i-x_i)(x'_i-x_{i+1})} \arrow{ddd}[bend left]{x_{i+1}-x'_i}& & R_{loc,i} \arrow[bend left]{ll}{W''_{i}} \arrow[bend left]{ddd}{1}\\
  & & \\
    & & \\
 R_{loc,i}\arrow[bend left]{rr}{x_i-x'_i} \arrow[bend left]{uuu}{1}& & R_{loc,i} \arrow[bend left]{ll}{W'_{i}} \arrow{uuu}{x_{i+1}-x'_i}\\
\end{tikzcd}
$$
Here $\xi_i$ are denoted by downward arrows, and the vertical differential $d_v$ by upward arrows, as above.
It is easy to see that 
$$
[\xi_i,d_{+}]=[\xi_i,d_{\partial W}]=0, [d_v,\xi_i]=x_{i+1}-x'_i,
$$
and the result follows. 
\end{proof}

\subsection{Generalizations of the $s$-invariant}

In \cite{LL} Lewark and Lobb defined a family of link invariants $s_{\partial W,\alpha}$ depending on a choice of a potential $W$ and a complex number $\alpha$ satisfying $\partial W(\alpha)=0$ (so that $\alpha$ is a critical point of $W$).  By \cite[Proposition 3.3]{LL} the invariants for $W(x)$ and $W(x+c)$ are equal, so throughout the paper we will assume $\alpha=0$. Furthermore, we can assume that $\partial W$ is monic and $W(0)=0$,  and write
\begin{equation}
\label{eq: W}
W(x)=\frac{x^{N+1}}{N+1}+a_N\frac{x^N}{N}+\ldots+a_2\frac{x^2}{2},\quad 
\partial W=x^N+a_Nx^{N-1}+\ldots+a_2x.
\end{equation}

The following is a variation of \cite[Theorem 2.4]{LL}, adapted to our setting of HOMFLY-PT homology.

\begin{theorem}
\label{thm: W s invariant}
a) Assume that $a_2\neq 0$ (so that $\alpha=0$ is a simple root of $\partial W$). Then the reduced homology $H_{\partial W}(K)$ of any knot $K$ is one-dimensional. 

b) There is a spectral sequence from $\HHH(K)$ to $H_{\partial W}(K)$ with the first differential $$d_{\partial W}=d_{N}+a_{N}d_{N-1}+\ldots+a_2d_1$$

c) There is a spectral sequence from $H_{\sll(N)}(K)$ to $H_{\partial W}(K)$ with the first differential $$a_{N}d_{N-1}+\ldots+a_2d_1$$
\end{theorem}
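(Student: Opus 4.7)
The plan is to prove (b) by a direct linearity argument, deduce (c) from (b) via a filtration spectral sequence, and then establish (a) by reducing to the $\sll(1)$ case.

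For (b), in the construction behind Theorem \ref{thm: Rasmussen ss}, the quantities $W_i = \sum_j(W(x_j) - W(x'_j))$, $W'_i = W_i/(x_i - x'_i)$, and $W''_i = W_i/((x_i - x'_i)(x'_i - x_{i+1}))$ are $\C$-linear in $W$, since the denominators are $W$-independent. Hence the leftward differential $d_{\partial W}$ in the matrix factorization is linear in $W$ as well. Writing $W = \sum_{k=2}^{N+1} a_k\, x^k/k$ with $a_{N+1} = 1$ and noting that the monomial $x^k/k$ has derivative $x^{k-1}$ and therefore gives the differential $d_{k-1}$, linearity yields
\[
d_{\partial W} \;=\; \sum_{k=2}^{N+1} a_k\, d_{k-1} \;=\; d_N + a_N d_{N-1} + \ldots + a_2 d_1.
\]

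For (c), I would filter the complex computing $H_{\partial W}(K)$ by the grading $q_{\sll(N)} = q + Na$. Each $d_k$ changes $q_{\sll(N)}$ by $2(k-N)$, so $d_N$ preserves $q_{\sll(N)}$ while each $d_k$ with $k < N$ strictly decreases it. Thus $d_N$ is the leading-order term of $d_{\partial W}$ in this filtration, and the associated spectral sequence has $E_1$ page identified with $H_{\sll(N)}(K)$ via Theorem \ref{thm: Rasmussen ss}(b). Since all $d_k$'s pairwise anticommute, the perturbation $d_{\partial W} - d_N = a_N d_{N-1} + \ldots + a_2 d_1$ descends to a well-defined differential on $H_{\sll(N)}(K)$, giving the first induced differential of the spectral sequence converging to $H_{\partial W}(K)$. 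Convergence follows from finite-dimensionality of $\HHH(K)$.

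For (a), I would follow the approach of Lewark-Lobb \cite[Theorem 2.4]{LL}. Since $a_2 \neq 0$, we can factor $\partial W(x) = x\, g(x)$ with $g(0) = a_2 \neq 0$, so that $\alpha = 0$ is a simple root. Over the formal completion of the base ring at $x = 0$, the function $g$ is a unit, so a formal Morse-lemma change of coordinates $u(x)$ with $u(0) = 0$ and $u'(0) = 1$ brings $W$ into the pure quadratic form $\frac{a_2}{2}\, u^2$. This change of variables lifts to an equivalence of the relevant matrix factorizations in a neighborhood of $\alpha = 0$, which identifies the reduced $H_{\partial W}(K)$ with the reduced $\sll(1)$ homology $H_{\sll(1)}(K) \cong \C$ of Corollary \ref{cor: def S}. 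The principal difficulty is promoting this formal change of coordinates to a genuine chain-level equivalence of matrix factorizations; this is handled by an inductive Gaussian-elimination argument in the $x$-adic filtration, killing the higher-order corrections term by term, as carried out in detail in \cite{LL}.
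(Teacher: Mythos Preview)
Your arguments for (b) and (c) match the paper's exactly: linearity of $d_{\partial W}$ in $W$ for (b), and the $q_{\sll(N)}$ filtration for (c).

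For (a), however, you take a genuinely different route. You invoke a formal Morse-lemma change of coordinates to reduce $W$ to a pure quadratic and then cite \cite{LL} for the chain-level implementation. The paper instead uses a filtration argument exactly parallel to your proof of (c): filter by $q_{\sll(1)}=q+a$. The term $a_2 d_1$ preserves $q_{\sll(1)}$ while every other term $a_k d_{k-1}$ with $k\ge 3$ strictly increases it, so the $E_1$ page of the resulting spectral sequence is the homology of $a_2 d_1$, which (since $a_2\neq 0$) is just $H_{\sll(1)}(K)\cong\C$. A one-dimensional $E_1$ page forces immediate collapse, and $H_{\partial W}(K)$ is one-dimensional. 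This is considerably more elementary and self-contained than your approach: it avoids any analytic input, any formal completions, and any appeal to the Gaussian-elimination machinery in \cite{LL}. Your approach is not wrong, but it outsources the main content of (a) to an external reference, whereas the paper's argument reuses the same idea you already deployed for (c), simply swapping $q_{\sll(N)}$ for $q_{\sll(1)}$.
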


\begin{proof}
Part (b) is immediate from Theorem \ref{thm: Rasmussen ss}, since $d_{\partial W}$ is linear in $W$.

To prove parts (a) and (c), we define two different filtrations on the corresponding chain complex. Observe that the differential $d_{m}$ changes $(q,a)$ bidegree to $(q+2m,a-2)$, so it changes $q_{\sll(k)}$ by $2m-2k=2(m-k).$

To prove (a), we use $q_{\sll(1)}$ to define a filtration. Indeed,
the term $a_2d_1$ preserves $q_{\sll(1)}$ and other terms $d_N+\ldots+a_3d_2$ strictly increase $q_{\sll(1)}$.  Since the homology of $d_1$ is one-dimensional, the spectral sequence of the filtered complex collapses, and $H_{\partial W}(K)$ is one-dimensional as well.

To prove (c), we use $q_{\sll(N)}$ to define a filtration. Indeed,
the term $d_N$ preserves $q_{\sll(N)}$ and other terms $a_Nd_{N-1}+\ldots+a_2d_1$ strictly decrease $q_{\sll(N)}$, so we get a desired spectral sequence for the filtered complex.
\end{proof}

Following \cite{LL}, we denote by $j_{\partial W}(K)$ the $q_{\sll(N)}$-degree of the surviving generator of the $E_{\infty}$ page in the spectral sequence in Theorem \ref{thm: W s invariant}(c)  and write
$$
s_{\partial W}=\frac{j_{\partial W}}{2(N-1)}.
$$ 

\begin{remark}
\label{rem: def j}
One can make the definition of $j_{\partial W}$ more explicit as follows. By the proof of Theorem \ref{thm: W s invariant}(c), the underlying chain complex $C$ is filtered by $q_{\sll(N)}$,  and one can consider subcomplexes
$$
\mathcal{F}_jC=\{u\in C: q_{\sll(N)}(u)\le j\}.
$$
This filtration induces a filtration on the homology $H_{\partial W}$: $\mathcal{F}_jH_{\partial W}$ consists of homology classes which have representatives in $\mathcal{F}_jC$. The $E_{\infty}$ page of the spectral sequence is then given by
$$
E_{\infty}=\bigoplus_{j}\mathcal{F}_jH_{\partial W}/\mathcal{F}_{j-1}H_{\partial W}.
$$
Since $H_{\partial W}$ is one-dimensional, $j_{\partial W}$ can be simply defined by
$$
j_{\partial W}=\min\{j: \mathcal{F}_jH_{\partial W}\neq 0\}.
$$
\end{remark}

\begin{remark}
\label{rem: s2 unique}
For $N=2$ we have $\partial W=x^2+a_2x$ with $a_2\neq 0$. By \cite[Proposition 3.3]{LL} one can assume $a_2=-1$, so that $\partial W=x^2-x$. Therefore for $N=2$ all invariants $s_{\partial W}$ agree and we write $s_{x^2-x}=s_2$. 
\end{remark}

\begin{corollary}
\label{cor: dN d1}
For $\partial W=x^{N}-x$ there is a spectral sequence from $H_{\sll(N)}$ to $H_{x^N-x}$ induced by the differential $d_1$.
\end{corollary}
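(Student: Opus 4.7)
The plan is to obtain this as a direct specialization of Theorem \ref{thm: W s invariant}(c), so there is essentially no work beyond identifying coefficients. I would begin by writing the polynomial $\partial W = x^N - x$ in the normal form \eqref{eq: W}: comparing with $\partial W = x^N + a_N x^{N-1} + \ldots + a_2 x$, one reads off $a_2 = -1$ and $a_k = 0$ for all $3 \le k \le N$. In particular the hypothesis $a_2 \ne 0$ of Theorem \ref{thm: W s invariant} is satisfied, so the theorem applies.

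Next I would invoke Theorem \ref{thm: W s invariant}(c) directly: it produces a spectral sequence from $H_{\sll(N)}(K)$ to $H_{\partial W}(K)$ whose first differential is $a_N d_{N-1} + \ldots + a_2 d_1$. Plugging in the coefficients identified above collapses this sum to $-d_1$, which defines the same spectral sequence as $d_1$ itself (the overall sign is immaterial for the induced filtration and its associated graded pieces).

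The one thing worth pointing out, to make sense of the statement, is that $d_1$ really does descend to a differential on $H_{\sll(N)}(K) = H(\HHH(K), d_N)$. This is immediate from the anticommutation relation $d_1 d_N + d_N d_1 = 0$ noted after Theorem \ref{thm: Rasmussen ss} (from \cite[Corollary 5.6]{RasDiff}), together with $d_1^2 = 0$. There is no genuine obstacle here: the only mild subtlety is bookkeeping the sign in $a_2 = -1$, which is harmless.
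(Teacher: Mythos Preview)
Your proof is correct and matches the paper's approach: the corollary is stated in the paper without proof, as an immediate specialization of Theorem~\ref{thm: W s invariant}(c) with $a_2=-1$ and $a_k=0$ for $k\ge 3$. Your remark that $d_1$ descends to $H_{\sll(N)}$ via the anticommutation $d_1d_N+d_Nd_1=0$ is exactly the justification the paper gives when it invokes this corollary in the proof of Proposition~\ref{prop: N large}.
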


Another interesting choice of potential corresponds to $\partial W=x^N-1$ and $\alpha=1$. We can replace it by $\partial W=(x+1)^N-1$ and $\alpha=0$. 

\begin{example}
For $N=3$ we get $(x+1)^3-1=x^3+3x^2+3x$, so there is a spectral sequence from $H_{\sll(3)}$ to $H_{(x+1)^3-1}$ induced by the differential $3(d_2+d_1)$.
\end{example}

As shown in \cite{LL}, the invariants $s_{x^N-x}$ and $s_{(x+1)^N-1}$ could in fact differ, see Section \ref{sec: small example} for an example. 

\subsection{$y$-ification}

Following \cite{GH}, we define the $y$-ification of  triply graded homology. We introduce additional formal variables $y_i$ (associated to strands of the braid) and tensor all chain groups by $\C[y_i]$. In terms of Rouquier complexes, we deform the differential as 
$$
D=d+\sum y_i\xi_i,\ D^2=\sum y_i(x_i-x'_{w(i)}).
$$
After closing the braid and identifying $y_i$ on the same connected component of the link, we get a well defined complex since $D^2=0$ \cite[Lemma 3.3]{GH}. The rest of Definition \ref{def: HHH} goes through, and the definition of Hochschild homology is unchanged.
We will denote the $y$-ified complex with differential $D$ by $\CY(\beta)$ and its  homology (also known as {\bf $y$-ified homology} by $\HY(\beta)$. One of the main results of \cite{GH} proves that $\HY(\beta)$ is a topological invariant of the closure of $\beta$.

In the notations of Lemma \ref{lem: Rasmussen HHH}, we deform the vertical differential by $D_v=d_v+\sum y_i\xi_i$ and do not change the horizontal differential $d_{+}$. By replacing the horizontal differential by $d_{+}+d_{\partial W}$, we get a definition of  $y$-ified  homology $\HY_{\partial W}$. For $\partial W=x^N$, this yields $y$-ified  $\sll(N)$ homology (see also \cite{CK,BS}).  

\begin{theorem}
\label{thm: Rasmussen ss deformed}
The above construction yields a well-defined $y$-ified  homology  $\HY_{\partial W}$ where the horizontal differential $d_{+}+d_{\partial W}$ commutes with the vertical differential $D_v$. There is a spectral sequence from $\HY(\beta)$ to  $\HY_{\partial W}(\beta)$ similar to the one in Theorem \ref{thm: Rasmussen ss}.
\end{theorem}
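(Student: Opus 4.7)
The plan is to reduce Theorem \ref{thm: Rasmussen ss deformed} to the technical inputs that were already used in defining $\HHH$, $\HY$, and $H_{\partial W}$ separately, combining them through Lemma \ref{lem: xi commutes with slN}.

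First I would check that $D_v = d_v + \sum_i y_i \xi_i$ squares to zero on the $y$-ified chain complex after closing the braid. This is the computation of \cite[Lemma 3.3]{GH} applied verbatim to the slightly enlarged complex: it uses only that $[d_v, \xi_i] = x_i - x'_{w(i)}$ at each crossing (Proposition \ref{prop: def xi}) and that closing the braid identifies $y_i$ along link components, so the anomaly $\sum_i y_i(x_i - x'_{w(i)})$ collapses to zero in the edge ring. That $(d_+ + d_{\partial W})^2 = 0$ was verified above and is unaffected by adjoining the central variables $y_i$.

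The crux is verifying that the horizontal and vertical differentials commute, i.e. $[d_+ + d_{\partial W}, D_v] = 0$. This splits into four pieces: $[d_+, d_v] = 0$ and $[d_{\partial W}, d_v] = 0$ are the commutation relations built into the Rasmussen bicomplex recalled in Remark \ref{rem: Rasmussen ss details}, while $[d_+, y_i \xi_i] = y_i [d_+, \xi_i] = 0$ and $[d_{\partial W}, y_i \xi_i] = y_i [d_{\partial W}, \xi_i] = 0$ follow immediately from Lemma \ref{lem: xi commutes with slN}. I expect this to be the main point of the argument: without the observation that the dot-sliding homotopy $\xi_i$ commutes with \emph{both} horizontal differentials of the matrix factorization, one would pick up obstruction terms involving $d_{\partial W}$, and the whole construction would fail. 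Everything else is formal bookkeeping.

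Finally, the spectral sequence from $\HY(\beta)$ to $\HY_{\partial W}(\beta)$ is built by the same two-step filtration argument recalled in Remark \ref{rem: Rasmussen ss details}. One filters by the horizontal grading: $d_+$ raises it by $+1$, $d_{\partial W}$ lowers it by $-1$, and $D_v$ preserves it. Taking homology with respect to the $y$-deformed vertical differential first yields $\HY(\beta)$ on the $E_1$ page, and the higher pages are generated by the differentials induced by $d_{\partial W}$, converging to $\HY_{\partial W}(\beta)$. The one thing that needs checking is that the single-horizontal-grading support arguments of \cite[Corollary 5.9, Lemma 5.12, Proposition 5.10]{RasDiff} survive $y$-ification; since those arguments are $R$-linear and adjoining polynomial variables $y_i$ is flat, they should go through directly, but this confirmation is the only non-formal step beyond the commutation check above.
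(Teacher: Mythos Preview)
Your proposal is correct and follows essentially the same route as the paper: the key input is Lemma \ref{lem: xi commutes with slN}, which gives $[d_+,\xi_i]=[d_{\partial W},\xi_i]=0$ and hence $[d_++d_{\partial W},D_v]=0$, after which the argument of Remark \ref{rem: Rasmussen ss details} is rerun with $D_v$ in place of $d_v$. One small sharpening: your flatness appeal in the last paragraph is correct but unnecessary, since the $y$-deformation only touches the vertical differential; the horizontal differentials $d_+$ and $d_{\partial W}$ are $\C[y_i]$-linear, so $H^+$ and $H^{\pm}$ are literally the undeformed ones tensored with $\C[y_i]$, and the single-horizontal-grading support of \cite[Corollary 5.9]{RasDiff} is inherited automatically rather than something to be re-verified.
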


\begin{proof}
We follow the logic of the proof of Theorem \ref{thm: Rasmussen ss} outlined in Remark \ref{rem: Rasmussen ss details}. By  Lemma \ref{lem: xi commutes with slN} the differential $D_v$ anticommutes with both $d_+$ and $d_{\partial W}$. The definitions of $H^+$ and $H^{\pm}$ are unchanged, so $H^{\pm}$ is supported in a single horizontal degree. The spectral sequence \eqref{eq: ss plus minus} is unchanged (and collapses), while the spectral sequence \eqref{eq: ss minus v} is replaced by
$$ 
H(H^{\pm}(\CS),D_{v})=H(H(H^+(\CS),d_{\partial W}),D_{v})\Rightarrow H(H^+(\CS),d_{\partial W}+D_{v})
$$ 
The differential $D_v$ preserves the horizontal degree, so
by the same argument as in Remark \ref{rem: Rasmussen ss details} this spectral sequence collapses. This implies a chain of isomorphisms
\begin{equation}
\label{eq: Rasmussen iso deformed}
H_{\sll(N)}(\beta)\simeq H(H(\CS(D),d_{+}+d_{\partial W}),D_v)\simeq H(H^{\pm}(\CS),D_{v})\simeq H(H^+(\CS),d_{\partial W}+D_{v}).
\end{equation}
and a spectral sequence
$$
\HY(\beta)=H(H^+(\CS),D_{v})\Rightarrow H(H^+(\CS),d_{\partial W}+D_{v})\simeq \HY_{\partial W}(\beta).
$$
induced by $d_{\partial W}$.
\end{proof}

\begin{corollary}
\label{cor: operator}
Suppose that $\Phi: \CS(\beta)\otimes \C[y_i]\to \CS(\beta)\otimes \C[y_i]$ commutes with $D_v,d_+$ and $d_{\partial W}$. Then $\Phi$ defines an operator on $\HY(\beta)$ which commutes with the differentials in the spectral sequence from Theorem \ref{thm: Rasmussen ss deformed}.
\end{corollary}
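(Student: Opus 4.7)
The plan is to push $\Phi$ through the two successive homology operations that define $\HY(\beta)$, and then to invoke functoriality of the spectral sequence associated to a bicomplex.

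First, since $[\Phi,d_+]=0$, the operator $\Phi$ descends to a well-defined endomorphism of $H^+(\CS)=H(\CS(\beta)\otimes\C[y_i],d_+)$, and the induced actions of $D_v$ and $d_{\partial W}$ on this page still commute with it (commutation at the chain level is inherited on homology). Using the commutation with $D_v$, I descend once more to a well-defined operator $\bar\Phi$ on $\HY(\beta)=H(H^+(\CS),D_v)$; by the same inheritance argument, $\bar\Phi$ continues to commute with the induced action of $d_{\partial W}$. This gives the first assertion of the corollary.

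Second, the spectral sequence of Theorem~\ref{thm: Rasmussen ss deformed} is the one associated to the anticommuting bicomplex $(H^+(\CS),d_{\partial W},D_v)$ filtered so that $D_v$ is the $E_0$-differential (so that $E_1\simeq \HY(\beta)$) and $d_{\partial W}$ governs the subsequent pages, converging to $\HY_{\partial W}(\beta)$ via the chain of identifications in \eqref{eq: Rasmussen iso deformed}. Because $\Phi$ commutes strictly with both $d_{\partial W}$ and $D_v$ already on $H^+(\CS)$, it is a morphism of bicomplexes that respects the filtration, and standard spectral sequence functoriality produces an induced endomorphism of each page $E_r$ commuting with the differential $d_r$. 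Specialized to $r=1$, this endomorphism is exactly $\bar\Phi$, completing the statement.

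I do not expect a substantive obstacle: the entire argument is formal functoriality of filtered/bicomplex spectral sequences once $\Phi$ is known to commute with every differential in sight. The only bookkeeping point is to confirm that the filtration implicitly used in the proof of Theorem~\ref{thm: Rasmussen ss deformed} is genuinely the bicomplex filtration by horizontal degree, so that $\Phi$ is filtered with respect to it; this follows directly from unpacking the cascade of collapses \eqref{eq: ss plus minus} and \eqref{eq: ss minus v} in the $y$-ified setting, where the horizontal grading is preserved by $D_v$ and decreased by $d_{\partial W}$.
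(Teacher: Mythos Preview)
Your argument is correct and is essentially the same as the paper's: both proofs push $\Phi$ down to $H^+(\CS)$ and then use that commutation with $D_v$ and $d_{\partial W}$ makes $\Phi$ a map of the filtered complex $(H^+(\CS),d_{\partial W}+D_v)$, so it commutes with the spectral sequence differentials by functoriality. The paper phrases the last step as compatibility with the chain of isomorphisms in \eqref{eq: Rasmussen iso deformed} rather than invoking bicomplex functoriality explicitly, but this is only a difference in packaging.
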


\begin{proof}
Since $\Phi$ commutes with $d_+$ and $d_{\partial W}$, it induces well-defined operators on $H^+(\CS)\otimes \C[y_i]$ and $H^{\pm}(\CS)\otimes \C[y_i]$. Since $\Phi$ commutes with $D_v$ as well,
it commutes with all the isomorphisms in \eqref{eq: Rasmussen iso deformed}, and the result follows.
\end{proof}

\subsection{Action of $\sll(2)$: definition}
\label{sec: sl2 definition}

\begin{lemma}
On any Rouquier complex $T_{\beta}$ there exists an operator $u$ of homological degree $(-2)$ and $q$-degree 4 such that 
\begin{equation}
\label{eq: def u}
[d,u]=\sum_{a=1}^{n} (x_a+x'_{w(a)})\xi_a
\end{equation}
where $w$ is the permutation corresponding to $\beta$.  
\end{lemma}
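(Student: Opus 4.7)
My approach would be to build $u$ by induction on the length $\ell$ of the braid word $\beta=\sigma_{i_1}^{\epsilon_1}\cdots\sigma_{i_\ell}^{\epsilon_\ell}$, using the explicit formulas for $\xi_a$ from Proposition~\ref{prop: def xi} as building blocks. Since each $\xi_a$ has $q$-degree $2$ and homological degree $-1$, the natural candidate for $u$ is a quadratic expression in the $\xi$'s with polynomial coefficients.

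\emph{Base case $\ell=1$.} On a single Rouquier factor $T_i^{\pm 1}$, I would show that the operator $\Psi:=\sum_a(x_a+x'_{w(a)})\xi_a$ already vanishes identically, so that $u=0$ works. Using $\xi_i=\pm b_i^*$, $\xi_{i+1}=\mp b_i^*$, and $\xi_j=0$ for $j\neq i,i+1$, the sum $\Psi$ reduces, after applying the relation $x_i+x_{i+1}=x'_i+x'_{i+1}$ in $B_i$, to $\pm 2(x_i-x'_i)b_i^*$. Evaluating on $1\in R$ produces $2(x_i-x'_i)(x_i-x'_{i+1})\in B_i$, and the second bimodule relation $x_ix_{i+1}=x'_ix'_{i+1}$ yields the Koszul identity $(x_i-x'_i)(x_i-x'_{i+1})=x_i^2-x_i(x_i+x_{i+1})+x_ix_{i+1}=0$ in $B_i$, confirming the vanishing.

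\emph{Inductive step.} I would first observe that on any $T_\beta$, the operator $\Psi$ is always a chain map: since polynomials commute with $d$, we have $[d,\Psi]=\sum_a(x_a+x'_{w(a)})(x_a-x'_{w(a)})=\sum_a x_a^2-\sum_a(x'_{w(a)})^2$, and this vanishes because both elementary symmetric functions of $(x_a,x_{a+1})$ are preserved at each crossing, so in particular the power sums $\sum_a x_a^2$ agree at the bottom and top of $T_\beta$. To produce a nullhomotopy, write $T_\beta=T_{\beta'}\otimes T_{i_\ell}^{\epsilon_\ell}$ with intermediate edge variables $x^{(\ell-1)}_a$; the total homotopies $\xi_a$ decompose, with appropriate Koszul signs and relabeling by the partial permutation $w'$ for $\beta'$, into a contribution supported on $T_{\beta'}$ and one supported on the last factor. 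Substituting this decomposition into $\Psi$, the ``separate-factor'' pieces can be killed by $[d,-]$ applied to $u^{\beta'}\otimes\mathrm{id}$ (from the inductive hypothesis, after absorbing the discrepancy between $x'_{w(a)}$ on $T_\beta$ and $x^{(\ell-1)}_{w'(a)}$ on $T_{\beta'}$ into a polynomial correction) and by the vanishing base case on the last factor. What remains is an explicit cross term of the form $\sum_{a,b}\xi_a^{\beta'}\cdot P_{ab}\cdot\xi^{(\ell)}_b$ with polynomial coefficients $P_{ab}$ in the intermediate variables; a nullhomotopy for this cross term can be constructed directly from the Koszul identity of the base case.

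The main obstacle will be the combinatorial bookkeeping in this inductive step: tracking the Koszul signs generated by graded commutators of $d$ with products of odd-degree homotopies, correctly identifying the intermediate variables under the tensor-product relations, and verifying that after summation the accumulated cross terms really do reassemble into $\sum_a(x_a+x'_{w(a)})\xi_a$ with no leftover lower-order discrepancy. I would expect the Koszul relation $(x_i-x'_i)(x_i-x'_{i+1})=0$ in $B_i$ uncovered in the base case to play a recurring role throughout this verification.
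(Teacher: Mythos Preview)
Your overall strategy—induction on the braid word with $u=0$ on a single crossing and a quadratic-in-$\xi$ correction in the inductive step—is exactly what the paper does. The base case is correct, though it can be shortened: since $b_i^*$ is a bimodule map and the left and right $R$-actions on $R$ coincide, the coefficient $x_i+x'_{i+1}-x_{i+1}-x'_i$ already acts as $0$ on the source $R$, so no appeal to the second relation $(x_i-x'_i)(x_i-x'_{i+1})=0$ in $B_i$ is needed.

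The inductive step, however, is where your description loses the thread. The paper simply posits
\[
u_{\beta\gamma}=u_\beta+u_\gamma+\sum_{a} \xi_a^{\beta}\,\xi_{v(a)}^{\gamma}
\]
(with $v$ the permutation of $\beta$) and checks $[d,u_{\beta\gamma}]=\sum_a(x_a+x''_{wv(a)})\xi_a^{\beta\gamma}$ by a three-line calculation using only the defining relation $[d,\xi_a]=x_a-x'_{v(a)}$ and the graded Leibniz rule. There are no nontrivial polynomial coefficients $P_{ab}$ (they are just Kronecker deltas $\delta_{b,v(a)}$), no ``polynomial correction'' to absorb, and the Koszul identity $(x_i-x'_i)(x_i-x'_{i+1})=0$ plays \emph{no role whatsoever} after the base case. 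Your sentence ``a nullhomotopy for this cross term can be constructed'' also has the layers reversed: the quadratic expression $\sum_a \xi_a^{\beta}\xi_{v(a)}^{\gamma}$ \emph{is} the final correction to $u$, and applying $[d,-]$ to it is precisely what cancels the intermediate-variable discrepancy you identified; there is no further homotopy to find. The combinatorial bookkeeping you flag as the main obstacle therefore does not materialize—once you write down this cross term, the verification is immediate.
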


\begin{proof}
We define $u=0$ for the braid generators $T_i^{\pm}$. This satisfies \eqref{eq: def u} for $T_i$ since 
$$
\sum_{a=1}^{n} (x_a+x'_{w(a)})\xi_a=b_i^{*}(x_i+x'_{i+1}-x_{i+1}-x'_{i})=0,
$$
the check for $T_i^{-1}$ is similar. Given two braids $\beta$ and $\gamma$ with the corresponding permutations $v,w$, homotopies $\xi_a^{\beta}$ and $\xi_a^{\gamma}$ and operators $u_{\beta},u_{\gamma}$, we define
\begin{equation}
\label{eq: gluing u}
u_{\beta\gamma}=u_{\beta}+u_{\gamma}+\sum_{a=1}^{n}\xi_{a}^{\beta}\xi_{v(a)}^{\gamma}.
\end{equation}
We have
$$
[d,u_{\beta\gamma}]=\sum_{a=1}^{n} (x_a+x'_{v(a)})\xi^{\beta}_a+
\sum_{a=1}^{n} (x'_{v(a)}+x''_{wv(a)})\xi^{\gamma}_{v(a)}+\sum_{a=1}^{n}(x_a-x'_{v(a)})\xi_{v(a)}^{\gamma}-\sum_{a=1}^{n}(x'_{v(a)}-x''_{wv(a)})\xi_{v(a)}^{\beta}=
$$
$$
\sum_{a=1}^{n} (x_a+x''_{wv(a)})\xi^{\beta}_a+\sum_{a=1}^{n} (x_a+x''_{wv(a)})\xi^{\gamma}_{v(a)}=
\sum_{a=1}^{n} (x_a+x''_{wv(a)})(\xi^{\beta}_a+\xi^{\gamma}_{v(a)}).
$$
On the other hand, $\xi_{a}^{\beta\gamma}=\xi_a^{\beta}+\xi_{v(a)}^{\gamma}$.
This allows us define $u$ inductively for arbitrary products of $T_i^{\pm}$.
\end{proof}

\begin{remark}
If we unpack the above proof and use \eqref{eq: gluing u} repeatedly, we will get the following elementary formula:
\begin{equation}
\label{eq: u easy}
u=\sum_{c\prec c'}\alpha_{c,c'}\xi^{(c)}\xi^{(c')}
\end{equation}
where $c,c'$ are two crossings in a braid $\beta$, $\xi^{(c)}$ and $\xi^{(c')}$ are single dot-sliding homotopies at $c,c'$ (given by $b_i$ or $b^*_i$) and 
$\alpha_{c,c'}$ is the sum of the following terms:
$$
\begin{cases}
+1 & \text{if the right output strand of}\ c\  \text{connects to the left input strand of}\ c'\\
-1 &  \text{if the right output strand of}\ c\  \text{connects to the right input strand of}\ c'\\
-1 &  \text{if the left output strand of}\ c\  \text{connects to the left input strand of}\ c'\\
+1 &  \text{if the left output strand of}\ c\  \text{connects to the right input strand of}\ c'\\
\end{cases}
$$
\end{remark}

\begin{proof}
It follows from \eqref{eq: gluing u} that
$$
u=\sum_{c\prec c'}\sum_{a}\xi_a^{(c)}\xi_{w(a)}^{(c')}
$$
where $w$ is the permutation matching the input strands of $c$ with the input strands of $c'$. It remains to notice that the $\xi_a^{(c)}$ for the left input (and the right output) strand of $c$ corresponds to $\xi^{(c)}$, the right input (and the left output) strand of $c$ corresponds to $-\xi^{(c)}$, and all other strands contribute 0. 
\end{proof}

Now we can define the operator on $y$-ified homology:
$$
E=\sum_a \left(x_a+x'_{w(a)}\right)\frac{\partial}{\partial y_i}+u
$$
\begin{theorem}[\cite{GHM}]
\label{thm: sl2}
We get the following:

a) $[D,E]=0$, so $E$ defines a chain map on the $y$-ified complex $\CY(\beta)$

b) $E$ induces an endomorphism of the $y$-ified homology $\HY(\beta)$ which is an invariant of the link obtained by closure of $\beta$. 

c) There exists an endomorphism $F$ of $\HY(\beta)$ such that $(E,F,H)$ form an $\sll(2)$-triple, and $H=\frac{1}{2}\deg_{q}$.

d) If $\beta$ closes up to a knot $K$, the Lie algebra $\sll(2)$ generated by $E,F,H$ also acts in the reduced HOMFLY-PT homology of $K$.
\end{theorem}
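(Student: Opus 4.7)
The plan is to work through parts (a)--(d) in order, with part (a) carrying the main technical load and the remaining parts following from structural considerations. For part (a), I would expand $[D, E]$ into four graded commutators: $[d, u]$, $[d, \sum_a (x_a + x'_{w(a)}) \partial_{y_a}]$, $[\sum_i y_i \xi_i, u]$, and $[\sum_i y_i \xi_i, \sum_a (x_a + x'_{w(a)}) \partial_{y_a}]$. The second vanishes because $d$ is a bimodule map (commuting with both the left $x_a$-action and the right $x'_b$-action) and is independent of $y$. The fourth collapses via $[y_i, \partial_{y_a}] = -\delta_{ia}$, assuming $\xi_i$ commutes with multiplication by the polynomial variables and with $\partial_{y_a}$, to $-\sum_a (x_a + x'_{w(a)}) \xi_a$, which exactly cancels $[d, u]$ by the defining property of $u$. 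It remains to verify $[\sum_i y_i \xi_i, u] = 0$; since $y_i$ is central and $u$ is $y$-free, this reduces to $[\xi_i, u] = 0$ at the chain level for each $i$.

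Once part (a) is established, $E$ descends to an endomorphism of $\HY(\beta)$, and for part (b) I would verify invariance under Markov moves. Conjugation invariance follows from applying the recursive formula \eqref{eq: gluing u} to $\beta = \gamma\delta\gamma^{-1}$ and noting that the contributions of $u_\gamma$ and $u_{\gamma^{-1}}$ cancel modulo homotopy together with the cross terms coming from the products $\xi^{\gamma}\xi^{\delta}$ and $\xi^{\delta}\xi^{\gamma^{-1}}$. Markov stabilization is handled by inspecting the matrix factorization model on the added strand and checking that the extra term in $u$ is a $D_v$-coboundary. For part (c), $H = \tfrac{1}{2}\deg_q$ is diagonal and $[H, E] = 2E$ is immediate from the $q$-degree $4$ of both summands of $E$. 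To produce $F$, I would invoke the self-symmetry of HOMFLY--PT homology exchanging $q \leftrightarrow -q$ (visible already in Figure \ref{11n_80}), define $F$ as the image of $E$ under this involution, and verify $[E, F] = H$ and $[H, F] = -2F$ on an $H$-weight decomposition of $\HY$. For part (d), reduced HOMFLY--PT homology of a knot is the fiber $\HY(K)/y\HY(K)$ for the single $y$-variable on the component, and $E, F, H$ preserve this quotient since any $\partial_y$-contribution is multiplied by a polynomial that vanishes modulo $y$.

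The main obstacle I expect is the chain-level vanishing $[\xi_i, u] = 0$ in part (a). Using the explicit formula \eqref{eq: u easy} presenting $u$ as a sum over ordered pairs of crossings, I would decompose $\xi_i$ as a sum of local dot-slidings at the crossings involving strand $i$, commute it past each product $\xi^{(c)}\xi^{(c')}$, and show that the sign pattern $\alpha_{c, c'}$ produces telescopic cancellation along strands. Should strict vanishing at the chain level fail, the remedy is to correct $E$ by an operator of homological degree $-3$ chosen so that $[\xi_i, u]$ is realized as a $d$-coboundary, with the process terminating because the Rouquier complex has bounded homological length; the remaining parts of the theorem then go through with the corrected $E$ mutatis mutandis.
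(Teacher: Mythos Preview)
First, note that the paper does not prove Theorem~\ref{thm: sl2}: it is quoted from \cite{GHM} and used as a black box, so there is no ``paper's own proof'' to compare against. What the paper does contain is the remark immediately following the theorem, which says that the construction of $F$ ``is not explicit, and follows from the `hard Lefschetz property' for $E$.'' This is precisely where your proposal has a genuine gap.

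Your plan for part (c) is circular. You propose to define $F$ as the image of $E$ under the $q\leftrightarrow -q$ symmetry of $\HHH(K)$, citing Figure~\ref{11n_80} as evidence that such a symmetry exists. But in this paper (and in \cite{GHM}) that symmetry is a \emph{consequence} of the $\sll(2)$ action, not an independent input: one first proves that $E^{k}$ gives an isomorphism between the weight spaces of $H$-weight $-k$ and $+k$ (a hard Lefschetz statement), and only then does Jacobson--Morozov (or an explicit primitive decomposition) produce $F$ with $[E,F]=H$. Even if you had an a priori involution $\iota$ with $\iota E\iota=F$, you would still owe a proof that $[E,F]=H$; without hard Lefschetz you have no leverage on this relation. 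So part (c) needs a different idea, namely the one the remark points to.

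Your treatment of (a) is essentially fine, and your worry about $[\xi_i,u]$ is easier to resolve than you suggest: the crossing homotopies $\xi^{(c)}$ act on distinct tensor factors (hence have vanishing graded commutator for $c\neq c'$) and square to zero on a two-term complex, so every $\xi^{(c)}$ graded-commutes with every $\xi^{(c')}$; since both $\xi_i$ and $u$ are built from these, $[\xi_i,u]=0$ on the nose and no homological-degree~$-3$ correction is needed. For part (d), your justification is garbled: the coefficient of $\partial_{y_a}$ in $E$ is $x_a+x'_{w(a)}$, which is a polynomial in the $x$-variables, not in $y$, so ``vanishes modulo $y$'' is not the right condition. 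What actually happens for a knot is that after identifying all $y_a$ to a single $y$ the coefficient of $\partial_y$ becomes $\sum_a(x_a+x'_{w(a)})=2\sum_a x_a$, which is zero on the \emph{reduced} homology; this is the relation you need, and it has nothing to do with $y$.
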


\begin{remark}
In \cite{GHM} the operator $E$ was denoted by $F_2$. Here we chose to change notations to match representation theory of $\sll(2)$ better. In particular, we want to emphasize that $E$ increases the $q$-grading, so one should think of it as a raising operator. 
\end{remark}

\begin{remark}
As explained above, one can define $E$  on chain level by a fairly explicit formula. On the contrary, the construction of $F$ is not explicit, and follows from the ``hard Lefshetz property" for $E$.
\end{remark}

\subsection{Action of $\sll(2)$: properties}
\label{sec: sl2 properties}

The first and the most important consequence from the existence of $\sll(2)$ action is that triply graded homology is symmetric, as is any finite-dimensional $\sll(2)$-representation.

\begin{theorem}[\cite{GHM}]
For any knot $K$, the triply graded homology $\HHH(K)$ is symmetric around the vertical axis. The symmetry preserves the $\Delta$-grading and transforms the three gradings by $(q,a,t)\to (-q,a,t+2q)$.
\end{theorem}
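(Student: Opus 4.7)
The plan is to reduce the symmetry to representation theory of $\sll(2)$ applied to the finite-dimensional module $\HHH(K)$ supplied by Theorem \ref{thm: sl2}. Since $K$ is a knot, $\HHH(K)$ is finite-dimensional, so as an $\sll(2)$-module it decomposes into a direct sum of finite-dimensional irreducibles, and the statement should drop out of the weight decomposition together with careful bookkeeping of gradings.

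First I would pin down the tridegrees of the generators. By Theorem \ref{thm: sl2}(c) we have $H=\tfrac12\deg_q$, so the relation $[H,E]=2E$ forces $E$ to raise $q$ by $4$ and $F$ to lower $q$ by $4$. Inspecting the chain-level formula for $E$ together with the degrees of $u$ recorded in \eqref{eq: def u} (homological degree $-2$, $q$-degree $4$), and using that the $y$-ification variables are introduced with compensating bidegrees, one checks that on $\HHH(K)$ the operator $E$ has tridegree $(q,a,t)\to (q+4,a,t-4)$; dually $F$ has tridegree $(-4,0,+4)$. In particular both $E$ and $F$ preserve the $a$-grading and the $\Delta$-grading $q+a+t$, so each irreducible $\sll(2)$-summand of $\HHH(K)$ is concentrated in a single pair $(a_0,\Delta_0)$.

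Next I would invoke the standard classification of finite-dimensional $\sll(2)$-representations: in an irreducible of highest weight $n$ the $H$-weight spaces at $m$ and $-m$ have equal dimensions. Since $H$ acts by $\tfrac12\deg_q$, within each irreducible summand the set of $q$-gradings is symmetric under $q\mapsto -q$, with equal dimensions at $q$ and at $-q$. Summing over all irreducible summands yields, for each fixed $(a,\Delta)$, an equality of dimensions at $q$-grading $q$ and at $-q$.

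Finally, preservation of $\Delta$ pins down the third coordinate of the symmetric image uniquely: a class in tridegree $(q,a,t)$ has $\Delta=q+a+t$, and its image in $q$-grading $-q$ (in the same irreducible) must satisfy $-q+a+t'=\Delta$, forcing $t'=t+2q$. This is precisely the claimed transformation $(q,a,t)\mapsto (-q,a,t+2q)$. The only substantive input is Theorem \ref{thm: sl2} itself, whose deepest content is the construction of $F$ via the hard Lefschetz property for $E$ established in \cite{GHM}; once the $\sll(2)$-action and its grading behaviour are in hand, the symmetry is pure linear algebra, and I do not anticipate any real obstacle beyond the routine verification that $E$ (hence $F$) fixes both $a$ and $\Delta$.
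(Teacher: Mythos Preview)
Your proposal is correct and matches the paper's approach exactly: the paper does not give a separate proof but states just before the theorem that the symmetry is ``the first and the most important consequence from the existence of $\sll(2)$ action'' since ``any finite-dimensional $\sll(2)$-representation'' is symmetric, and in the introduction records that the $\sll(2)$ action preserves both the $a$- and $\Delta$-gradings. Your argument simply fleshes out this one-line justification with the explicit grading bookkeeping, and the tridegree $(+4,0,-4)$ you deduce for $E$ is consistent with what the paper asserts.
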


\begin{theorem}
\label{thm: E diffs}
a) The operator $E$ from $\sll(2)$ commutes with all differentials in the $y$-ified $\sll(N)$ spectral sequence from Theorem \ref{thm: Rasmussen ss deformed}.

b) Suppose that $\beta$ closes to a knot. Then the operator $E$ from $\sll(2)$ commutes with all differentials in the $\sll(N)$ spectral sequence (in particular, with $d_N$).
\end{theorem}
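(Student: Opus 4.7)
The plan is to deduce both parts from Corollary \ref{cor: operator} applied with $\Phi = E$, which reduces the theorem to checking that $E$ commutes on the chain level with each of the three building-block differentials $d_+$, $d_{\partial W}$, and $D_v$ in the $y$-ified deformed bicomplex. Once that is established, part (a) is immediate and part (b) will follow by specializing at $y=0$.

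For the commutation with $D_v$, Theorem \ref{thm: sl2}(a) already gives $[D,E]=0$, where $D$ is the differential on the $y$-ified Rouquier complex $\CY(\beta)$. Since the Koszul resolution of Lemma \ref{lem: Rasmussen HHH} introduces the horizontal differential $d_+$ but leaves the Rouquier direction intact, and the $y$-ification only deforms the Rouquier differential to $D_v = d_v + \sum y_i \xi_i$, this identity becomes $[D_v,E]=0$ on the bicomplex. For $[d_+,E]=0$ and $[d_{\partial W},E]=0$, the explicit formula \eqref{eq: u easy} writes $u$ as a quadratic polynomial in the dot-sliding homotopies $\xi_i$ with constant coefficients, and Lemma \ref{lem: xi commutes with slN} says that each $\xi_i$ commutes with both $d_+$ and $d_{\partial W}$; hence so does $u$. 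The remaining summand $\sum_a (x_a + x'_{w(a)}) \partial/\partial y_i$ of $E$ is the product of multiplication by an element of the edge ring (which commutes with the $R$-linear differentials $d_+$ and $d_{\partial W}$) with $\partial/\partial y_i$ (which commutes trivially with differentials not involving $y$). Corollary \ref{cor: operator} then yields part (a).

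For part (b), once $\beta$ closes to a knot $K$, all the $y_i$ are identified with a single variable $y$ and $\HY(K)$ is a free $\C[y]$-module with $\HY(K)/y\,\HY(K) \cong \HHH(K)$. The $y$-ified spectral sequence of Theorem \ref{thm: Rasmussen ss deformed} reduces modulo $y$ to the Rasmussen spectral sequence $\HHH(K)\Rightarrow H_{\sll(N)}(K)$ of Proposition \ref{prop: differentials}, and the $\sll(2)$-action on $\HHH(K)$ given by Theorem \ref{thm: sl2}(d) is obtained from the $y$-ified action through this reduction. Commutation in the $y$-ified setting therefore specializes to commutation in the non-$y$-ified setting. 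The main subtlety lies precisely in this descent step: because the operator $E$ is not $\C[y]$-linear, owing to the $\partial/\partial y_i$ piece, some care will be needed in confirming that the $\sll(2)$-action on $\HHH(K)$ produced in \cite{GHM} genuinely arises as the $y=0$ shadow of the $y$-ified $E$ and that the $d_N^{(i)}$ on $\HHH(K)$ are the mod-$y$ reductions of their $y$-ified counterparts. Beyond this bookkeeping, the argument is a repackaging of the commutation relations among $\xi_i$, $d_+$, $d_{\partial W}$, and $D_v$ already available from Lemma \ref{lem: xi commutes with slN} and Theorem \ref{thm: sl2}(a).
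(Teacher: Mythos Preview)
Your proposal is correct and follows essentially the same route as the paper's proof: verify $[E,D_v]=0$ via Theorem~\ref{thm: sl2}, use \eqref{eq: u easy} together with Lemma~\ref{lem: xi commutes with slN} to get $[u,d_+]=[u,d_{\partial W}]=0$, conclude $[E,d_+]=[E,d_{\partial W}]=0$, and invoke Corollary~\ref{cor: operator} for part~(a); then derive part~(b) from~(a) and Theorem~\ref{thm: sl2}. You are in fact more explicit than the paper on two points---the commutation of the $\partial/\partial y_i$ summand with the $y$-independent horizontal differentials, and the descent from the $y$-ified to the ordinary spectral sequence---where the paper simply writes ``Part~(b) follows from~(a) and Theorem~\ref{thm: sl2}.''
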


\begin{proof}
By  Theorem \ref{thm: sl2} $E$ commutes with the differential $D_v$. By \eqref{eq: u easy} we can write the operator $u$ as a linear combination of products of $\xi_i$, hence by Lemma \ref{lem: xi commutes with slN} we have $[u,d_+]=[u,d_{\partial W}]=0$. Therefore $[E,d_{+}]=[E,d_{\partial W}]=0$ and by Corollary \ref{cor: operator} this implies part (a). Part (b) follows from (a) and Theorem \ref{thm: sl2}. 
\end{proof}

\begin{lemma}
\label{lem: super diffs}
There exists a family of operators $d_{a|b}$ on $\HHH(K)$ satisfying the following equations:
$$
d_{N|0}=d_N,\ [E,d_{a|b}]=b\cdot d_{a+1|b-1},\ [F,d_{a|b}]=a\cdot d_{a-1|b+1},\ [H,d_{a|b}]=(a-b)\cdot d_{a|b}.
$$
The symmetry of $\HHH(K)$ exchanges $d_{a|b}$ with $d_{b|a}$.
\end{lemma}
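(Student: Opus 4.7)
The plan is to recognize $d_{N} = d_{N|0}$ as a highest weight vector for the adjoint action of $\sll(2)$ on the space of operators on $\HHH(K)$, and then generate the whole family $d_{a|b}$ by applying $\ad F$ repeatedly, using nothing more than standard $\sll(2)$ representation theory.

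First, I would verify the two hypotheses making $d_{N|0}$ a highest weight vector. By Theorem~\ref{thm: E diffs}(b), $[E, d_{N}] = 0$ on $\HHH(K)$. For the weight, recall $H = \tfrac{1}{2}\deg_q$ from Theorem~\ref{thm: sl2}(c), and $d_{N}$ shifts $q$ by $+2N$, so $[H, d_{N}] = N \cdot d_{N}$. Thus $d_{N|0}$ sits as a highest weight vector of weight $N$ inside the adjoint representation of $\sll(2)$ on $\End(\HHH(K))$.

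Next, I would define for $0 \le k \le N$,
\begin{equation*}
  d_{N-k \,|\, k} \;:=\; \frac{(N-k)!}{N!}\, (\ad F)^{k}\, d_{N|0},
\end{equation*}
and set $d_{a|b} = 0$ if $a < 0$ or $b < 0$ (or if $a+b \ne N$). All three commutator identities are then immediate from the standard relations in the universal enveloping algebra of $\sll(2)$. Concretely:
\begin{itemize}
\item $[H, (\ad F)^{k} d_{N|0}] = (N - 2k)\,(\ad F)^{k} d_{N|0}$, which rescales to $[H, d_{a|b}] = (a-b)\, d_{a|b}$ with $a = N-k$, $b = k$.
\item $[F, d_{a|b}] = a \cdot d_{a-1|b+1}$ is just the definition after rescaling, since $(\ad F)\cdot(\ad F)^{k} = (\ad F)^{k+1}$ and $(N-k)!/N! = (N-k)\cdot(N-k-1)!/N!$.
\item $[E, d_{a|b}] = b\cdot d_{a+1|b-1}$ follows from the identity $[\ad E, (\ad F)^{k}] = k\,(\ad F)^{k-1}(\ad H - k + 1)$ (the adjoint lifts the Weyl algebra identity $[E,F^k] = kF^{k-1}(H-k+1)$), applied to the highest weight vector $d_{N|0}$ of weight $N$: this gives $k(N-k+1)(\ad F)^{k-1} d_{N|0}$, which after rescaling is exactly $k\, d_{N-k+1\,|\,k-1} = b \cdot d_{a+1|b-1}$.
\end{itemize}

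Finally, for the symmetry statement, I would invoke the fact that the symmetry of $\HHH(K)$ realizes the non-trivial Weyl group element of $\sll(2)$, which conjugates $E \leftrightarrow -F$ and $H \to -H$. Under this conjugation, the highest weight vector $d_{N|0}$ is mapped to a lowest weight vector of the same irreducible summand, hence (up to a normalization which can be fixed) to $d_{0|N}$; the same formula with $(\ad F)^{k}$ replaced by $(\ad E)^{k}$ then exchanges $d_{a|b}$ with $d_{b|a}$. The main thing to take care of is this normalization check, i.e.\ verifying that the scaling conventions in the definition of $d_{a|b}$ are compatible with the Weyl symmetry so that no stray constants appear; this is a short calculation using that $(\ad E)^{N}$ applied to a lowest weight vector of weight $-N$ gives $N!$ times the highest weight vector, matching the factorials appearing in our definition. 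With normalization settled, the identification $d_{a|b} \leftrightarrow d_{b|a}$ under the $\HHH(K)$ symmetry follows at once.
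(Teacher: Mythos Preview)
Your proposal is correct and follows the same approach as the paper: identify $d_N$ as a highest weight vector of weight $N$ for the adjoint $\sll(2)$-action on the finite-dimensional space $\End(\HHH(K))$, then define $d_{a|b}$ via powers of $\ad_F$. Your normalization $\tfrac{(N-k)!}{N!}$ is in fact the one that makes the stated commutator relations come out exactly (the paper's $\tfrac{1}{b!}$ is a slip, as it yields $[F,d_{a|b}]=(b{+}1)\,d_{a-1|b+1}$ instead of $a\cdot d_{a-1|b+1}$), and your sketch of the symmetry statement via the Weyl involution goes a step beyond what the paper's own proof addresses.
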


\begin{proof}
If $d_{N}=0$ then we can set $d_{a|b}=0$ for all $a+b=N$.
Otherwise, assume $d_N\neq 0$ on $\HHH(K)$.  

The endomorphism algebra $\End(\HHH(K))$ is a finite-dimensional representation of $\sll(2)$ which acts by $\ad_E,\ad_F,\ad_H$. The eigenvalue of $\ad_H$ on an endomorphism $A$ equals half of $q$-degree of $A$, in particular, $\ad_H(d_N)=Nd_N$. Furthermore, $\ad_E(d_N)=0$ by Theorem \ref{thm: E diffs}(b), so $d_N$ is a highest weight vector of weight $N$, and hence spans an $(N+1)$-dimensional representation of $\sll(2)$.

More precisely, for $a+b=N$ we can define 
$$
d_{a|b}:=\frac{1}{b!}\ad_F^{b}(d_N)\in\End(\HHH(K)),
$$
then the $\sll(2)$ relations are satisfied and 
$$
[F,d_{0|N}]=\frac{1}{N!}\ad_F^{N+1}(d_N)=0.
$$
\end{proof}

See Section \ref{sec: T45} for an example of computation of $d_{a|b}$.

\begin{remark}
One might expect that the operators $d_{a|b}$ agree with the conjectural ``supergroup differentials'' conjectured in \cite{GGS,GNR}, but we do not pursue it here. 
\end{remark}

\begin{corollary}
Define $d_{-1}=\ad_F(d_1)=[F,d_1]$. Then 
\begin{equation}
\label{eq: d-1 sl2}
[E,d_{-1}]=d_1, [F,d_{-1}]=0,
\end{equation}
so that $d_1,d_{-1}$ span a 2-dimensional representation of $\sll(2)$. Furthermore,
\begin{equation}
\label{eq: d-1 anticommute}
d_{-1}^2=d_{-1}d_1+d_1d_{-1}=0.
\end{equation}
The symmetry of $\HHH(K)$ exchanges $d_1$ with $d_{-1}$, in particular, $d_{-1}$ is a cancelling differential if $d_1$ is. 
\end{corollary}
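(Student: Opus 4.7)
The plan is to reduce everything to Lemma \ref{lem: super diffs} together with the fact that $d_1^2 = 0$, using only the derivation property of $\ad_F = [F,-]$ on the endomorphism algebra $\End(\HHH(K))$. Specialising Lemma \ref{lem: super diffs} to $N=1$ identifies $d_{1|0} = d_1$ and $d_{0|1} = \tfrac{1}{1!}\ad_F(d_1) = [F,d_1]$, so our $d_{-1}$ coincides with $d_{0|1}$. The two commutator relations in \eqref{eq: d-1 sl2} are then direct instances of the $\sll(2)$-relations in that lemma: $[E,d_{0|1}] = 1\cdot d_{1|0} = d_1$ and $[F,d_{0|1}] = 0\cdot d_{-1|2}$, which by convention is $0$ since there is no $d_{-1|2}$. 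This already shows $d_1$ and $d_{-1}$ span a $2$-dimensional $\sll(2)$-representation.

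The core of the argument is to transfer the relation $d_1^2 = 0$ over to $d_{-1}$. I would apply $\ad_F$ once to $d_1^2 = 0$, using that $\ad_F$ is a graded derivation on $\End(\HHH(K))$:
\begin{equation*}
0 \;=\; \ad_F(d_1 d_1) \;=\; \ad_F(d_1)\, d_1 + d_1\, \ad_F(d_1) \;=\; d_{-1} d_1 + d_1 d_{-1},
\end{equation*}
which gives the anticommutation relation in \eqref{eq: d-1 anticommute}. Applying $\ad_F$ a second time and invoking $[F,d_{-1}] = 0$ yields
\begin{equation*}
0 \;=\; \ad_F\bigl(d_{-1} d_1 + d_1 d_{-1}\bigr) \;=\; d_{-1} \ad_F(d_1) + \ad_F(d_1) d_{-1} \;=\; 2\, d_{-1}^2,
\end{equation*}
and since we work in characteristic zero this forces $d_{-1}^2 = 0$.

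For the symmetry assertion, I would simply quote the final sentence of Lemma \ref{lem: super diffs}: the symmetry of $\HHH(K)$ swaps $d_{a|b}$ with $d_{b|a}$, so in particular it swaps $d_{1|0} = d_1$ with $d_{0|1} = d_{-1}$. The remark that $d_{-1}$ is cancelling whenever $d_1$ is then follows because the symmetry is an isomorphism of $\HHH(K)$ intertwining the two differentials, hence their homologies have the same dimension; in particular one is one-dimensional (the defining property of a cancelling differential for a knot) iff the other is.

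There is not really a hard step here — the argument is essentially a two-line derivation calculus on top of Lemma \ref{lem: super diffs}. The only point to be slightly careful about is the derivation identity $\ad_F(AB) = \ad_F(A)B + A\ad_F(B)$ and the observation that $d_{-1}$ is even as an element of the $\ad$-module so that no unwanted signs appear; once this is noted, both \eqref{eq: d-1 sl2} and \eqref{eq: d-1 anticommute} drop out immediately, and the symmetry claim is inherited from the $N=1$ specialisation of Lemma \ref{lem: super diffs}.
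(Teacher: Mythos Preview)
Your proof is correct and follows essentially the same route as the paper. For \eqref{eq: d-1 sl2} both you and the paper simply specialise Lemma \ref{lem: super diffs} to $N=1$. For \eqref{eq: d-1 anticommute} the paper expands $(Fd_1-d_1F)d_1+d_1(Fd_1-d_1F)$ and $(Fd_1-d_1F)^2$ by hand, using $d_1^2=0$ and the identity $Fd_1F=\tfrac12(F^2d_1+d_1F^2)$ coming from $[F,d_{-1}]=0$; your iterated application of the derivation $\ad_F$ to $d_1^2=0$ is the same computation packaged more cleanly. One small remark: the absence of signs in $\ad_F(AB)=\ad_F(A)B+A\,\ad_F(B)$ is because $F$ is even, not because $d_{-1}$ is; but this does not affect your argument.
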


\begin{proof}
Equation \eqref{eq: d-1 sl2} follows from Lemma \ref{lem: super diffs}, but we need to prove \eqref{eq: d-1 anticommute}. Observe that
$$
[F,d_{-1}]=F(Fd_{1}-d_{1}F)-(Fd_{1}-d_{1}F)F=F^2d_{1}+d_1F^2-2Fd_1F=0,
$$
so
$$
Fd_1F=\frac{1}{2}(F^2d_{1}+d_1F^2).
$$
Now
$$
d_{-1}d_1+d_1d_{-1}=(Fd_{1}-d_{1}F)d_1+d_1(Fd_{1}-d_{1}F)=Fd_1^2-d_1Fd_1+d_1Fd_1-d_1^2F=0
$$
and
$$
d_{-1}^2=(Fd_{1}-d_{1}F)^2=Fd_1Fd_1-Fd_1^2F-d_1F^2d_1+d_1Fd_1F=
$$
$$
\frac{1}{2}(F^2d_{1}+d_1F^2)d_1-d_1F^2d_1+\frac{1}{2}d_1(F^2d_{1}+d_1F^2)=0.
$$
\end{proof}

\begin{proposition}
\label{prop: blocks}
Assume that a knot $K$ is $d_1$-standard. Then $\HHH(K)$ is filtered by the symmetric blocks of the following form:
$$ 
\begin{tikzcd}
   & 1 \arrow[bend left]{rr}{E} \arrow{dl}{d_{-1}} \arrow{dr}{d_1}&   & 1 \arrow[bend left]{rr}{E}  \arrow{dl}{d_{-1}} \arrow{dr}{d_1}&   & \cdots\arrow[bend left]{rr}{E} \arrow{dl}{d_{-1}} \arrow{dr}{d_1}& & 1  \arrow{dl}{d_{-1}} \arrow{dr}{d_1}& \\
 1    \arrow{dr}{d_1}&   & 2  \arrow{dl}{d_{-1}} \arrow{dr}{d_1}&   & 2 \arrow{dl}{d_{-1}} \arrow{dr}{d_1}& \cdots & 2 \arrow{dl}{d_{-1}} \arrow{dr}{d_1} & & 1 \arrow{dl}{d_{-1}}\\
    & 1 \arrow[bend right]{rr}{E}&   & 1 \arrow[bend right]{rr}{E} &   & \cdots \arrow[bend right]{rr}{E}& & 1 & 
\end{tikzcd}
$$ 
or zigzags of the following form:
$$ 
\begin{tikzcd}
  1      \arrow{dr}{d_1}  \arrow[bend left]{rr}{E}& &\cdots \arrow{dr}{d_1} \arrow{dl}{d_{-1}} \arrow[bend left]{rr}{E}& & 1   \arrow{dl}{d_{-1}}\\
 & 1  & \cdots   & 1 & 
\end{tikzcd}
\quad
\mathrm{or}
\quad
\begin{tikzcd}
 & 1 \arrow{dr}{d_1} \arrow{dl}{d_{-1}}& \cdots   & 1 \arrow{dr}{d_1} \arrow{dl}{d_{-1}}& \\
  1        \arrow[bend right]{rr}{E}& &\cdots   \arrow[bend right]{rr}{E}& & 1 
\end{tikzcd}
$$ 
Furthermore, there is exactly one zigzag of either type.
\end{proposition}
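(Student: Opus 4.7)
\emph{Plan.} We view $\HHH(K)$ as a module over the associative algebra $\mathcal{A}$ generated by the $\sll(2)$-triple $(E,H,F)$ and the odd operators $d_1,d_{-1}$ subject to the relations of Theorem~\ref{thm: intro diff relations}. Here $V_1 := \langle d_1,d_{-1}\rangle$ is a two-dimensional $\sll(2)$-subrepresentation, and by \eqref{eq: d-1 anticommute} the odd operators generate an exterior algebra, so $\mathcal{A}$ is (up to grading shifts) the smash product $U(\sll(2)) \# \Lambda(V_1)$. The strategy is to decompose $\HHH(K)$ over $\mathcal{A}$ by reducing to a bicomplex of multiplicity spaces and then count zigzags using the $d_1$-standard hypothesis.

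Since $\sll(2)$ is reductive, write $\HHH(K) = \bigoplus_m V_m \otimes W_m$ with multiplicity spaces $W_m$. By the Clebsch--Gordan decomposition $V_m \otimes V_1 \cong V_{m+1} \oplus V_{m-1}$ and Schur's lemma, the $V_1$-valued intertwiner $(d_1,d_{-1})$ is encoded in an upward map $u_m \colon W_m \to W_{m+1}$ and a downward map $\ell_m \colon W_m \to W_{m-1}$. A direct Clebsch--Gordan computation translates the exterior relations $d_1^2 = d_{-1}^2 = d_1 d_{-1} + d_{-1} d_1 = 0$ into $u \circ u = 0$, $\ell \circ \ell = 0$, and an anticommutation $u_{m-1} \ell_m + c_m \, \ell_{m+1} u_m = 0$ with explicit nonzero constants $c_m$, making $(W_\bullet, u_\bullet, \ell_\bullet)$ a finite-dimensional anti-bicomplex after rescaling.

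Applying the standard classification of finite-dimensional bicomplexes over a field, and refining longer zigzag summands into atomic single-arrow pieces, one obtains a filtration whose associated graded consists of (i) four-element \emph{diamonds} ($t,b \in W_m$ and $c_\pm \in W_{m\pm 1}$ with $u t = c_+$, $\ell t = c_-$, $\ell c_+ = b$, $u c_- = b$) and (ii) \emph{single-arrow zigzags} between $W_m$ and $W_{m\pm 1}$. Lifting $\sll(2)$-equivariantly: each diamond produces a big block (with $t$ and $b$ lifting to the top and bottom $V_m$-rows and $c_\pm$ lifting to the middle $V_{m+1} \oplus V_{m-1}$-row); each $\ell$-zigzag produces a $\vee$-zigzag (top $V_m$, bottom $V_{m-1}$), and each $u$-zigzag produces a $\wedge$-zigzag.

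By Lemma~\ref{lem: S determined}, $\dim H_*(\HHH(K), d_1) = 1$, and by the $\sll(2)$-symmetry of $\HHH(K)$ that exchanges $d_1 \leftrightarrow d_{-1}$, also $\dim H_*(\HHH(K), d_{-1}) = 1$. Big blocks are exact for both differentials, so all the homology sits in zigzags, and each zigzag contributes one class to each of $H_*(d_1)$ and $H_*(d_{-1})$ on the $E_1$-page of the filtration spectral sequence. The $\sll(2)$-symmetry equates the $\vee$- and $\wedge$-zigzag counts, and the one-dimensional constraint on the final homology (with higher spectral sequence differentials cancelling paired classes) pins down exactly one zigzag of each type. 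The main technical obstacle is the Clebsch--Gordan computation producing the bicomplex relations with explicit nonvanishing constants $c_m$; a secondary subtlety is refining the standard bicomplex decomposition into the specific filtration by diamonds and single-arrow pieces, which requires splitting every longer zigzag summand into length-two atoms.
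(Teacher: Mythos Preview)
Your reduction to multiplicity spaces is attractive, but the key claim that the exterior relations yield an anticommuting bicomplex $(W_\bullet,u,\ell)$ breaks at the boundary $m=0$. The three relations $d_1^2=d_{-1}^2=d_1d_{-1}+d_{-1}d_1=0$ kill exactly the $V_2\subset V_1\otimes V_1$ component of the double action; the antisymmetric part $d_1d_{-1}-d_{-1}d_1=2d_1d_{-1}$ spans the surviving $V_0$ and is a \emph{nonzero central element} of $\mathcal A$. For $m\ge 1$ the $V_m$-isotypic piece of $V_m\otimes V_1\otimes V_1$ is two-dimensional, so killing $V_2$ does give one linear relation between $u_{m-1}\ell_m$ and $\ell_{m+1}u_m$. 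But for $m=0$ there is only the single path $\ell_1 u_0$ (since $W_{-1}=0$) and only one copy of $V_0$ in $V_0\otimes V_1\otimes V_1$, coming from $V_0\otimes V_0$; the $V_2$-vanishing imposes no constraint whatsoever, and $\ell_1 u_0$ is precisely the action of the central element $d_1d_{-1}$ on $W_0$. This is nonzero already in the smallest big block (top and bottom $L(0)$, middle $L(1)$), so no rescaling produces a genuine bicomplex and the standard classification does not apply. The paper avoids this entirely by filtering from the \emph{top $a$-degree}: one picks an $\sll(2)$-irreducible $U$ there, maps the free module $\widehat U=U\otimes\langle 1,d_{-1},d_1,d_{-1}d_1\rangle$ into the current quotient, and classifies the possible images directly. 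The $a$-grading---which $d_1$, $d_{-1}$, and the troublesome central element all strictly decrease---is what makes the induction run.

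Your zigzag count is also off. The $q\mapsto -q$ symmetry swaps $d_1\leftrightarrow d_{-1}$, but it is the Cartan involution on each $V_m$ and therefore preserves each summand of $V_m\otimes V_1\cong V_{m+1}\oplus V_{m-1}$ separately; it fixes $u$ and $\ell$ individually rather than exchanging them, so it does not relate the two zigzag types. And since every zigzag contributes one class to $H_*(\HHH(K),d_1)$ while blocks are acyclic, the $d_1$-standard hypothesis forces exactly \emph{one} zigzag in total (of one type or the other), not one of each: two zigzags would make the $E_1$-page even-dimensional, which cannot converge to the one-dimensional $E_\infty$.
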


\begin{proof}
We have an action of $\sll(2)\rtimes \langle d_{-1},d_1\rangle$ on $\HHH(K)$ where $d_{-1}$ and $d_1$ square to zero and anticommute by \eqref{eq: d-1 anticommute}. All these operators preserve $\Delta$-grading, so we can focus on one $\Delta$-grading.

Let $U$ be an $\sll(2)$-irreducible summand of $\HHH(K)$ in the top $a$-degree. Define $\widehat{U}=U\otimes \langle 1,d_{-1},d_1,d_{-1}d_1\rangle$. We have a natural $\sll(2)\rtimes \langle d_{-1},d_1\rangle$-invariant map $\varphi:\widehat{U}\to \HHH(K)$, let us describe its possible kernel and image. 

Let $L(n)$ denote the irreducible representation of $\sll(2)$ of highest weight $n$. If $U\simeq L(n)$, then
$$
U\otimes \langle d_{-1},d_1\rangle\simeq L(n)\otimes L(1)\simeq L(n+1)\oplus L(n-1)
$$
as $\sll(2)$ representations, and
$\widehat{U}$ decomposes into irreducible $\sll(2)$ representations as follows: $L(n)$ in top $a$-degree, $L(n+1)\oplus L(n-1)$ in the middle $a$-degree, and $L(n)$ in the bottom $a$-degree. This is precisely the block in the above picture, so if $\Ker(\varphi)=0$ we will see this block in $\HHH(K)$.

Otherwise we have the following options:

1) $\Ker(\varphi)$ contains $L(n+1)$ in the middle $a$-degree and $L(n)$ in the bottom $a$-degree. Then $\Imm(\varphi)$ is isomorphic to a zigzag of the first type.

2) $\Ker(\varphi)$ contains $L(n-1)$ in the middle $a$-degree and $L(n)$ in the bottom $a$-degree. Then $\Imm(\varphi)$ is isomorphic to a zigzag of the second type.

3) $\Ker(\varphi)$ contains both $L(n+1)$ and $L(n-1)$ in the middle $a$-degree, and $L(n)$ in the bottom $a$-degree. Then $\Imm(\varphi)\simeq U$.

4)  $\Ker(\varphi)$ contains only $L(n)$ in the bottom $a$-degree. In this case $\Imm(\varphi)$ contains $L(n)$ on the top $a$-degree and $L(n+1)\oplus L(n-1)$ in the middle $a$-degree.

Now we use the fact that the homology of $d_1$ is one-dimensional. The full block $\widehat{U}$ is acyclic, while the zigzags in cases (1) and (2) contribute one-dimensional homology. In cases (3) and (4) we get $(n+1)$-dimensional homology, which is possible only if $n=0$ and we get special cases of zigzags again.  
\end{proof}

\section{Examples}
\label{sec: examples}
\subsection{Example: $10_{125}$}
\label{sec: small example}

One interesting example is given by the knot $10_{125}$. Its homology is shown in Figure \ref{fig: 10 125}.

\begin{figure}[ht!]
        \includegraphics[width= \textwidth]{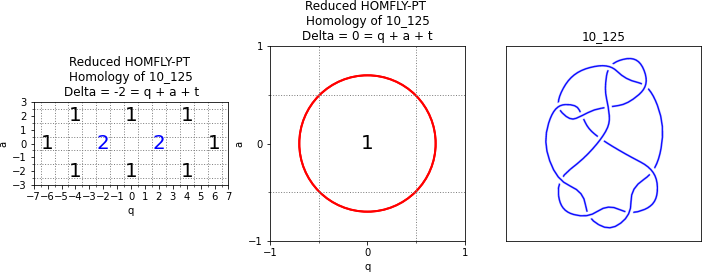}
\caption{Triply graded homology of the knot $10_{125}$ where the $S$-invariant equals 0, while the Rasmussen invariant equals $s=-2$. Furthermore, $s_{x^3-x}=S=0$  while  $s_{x^N-1}=-\frac{1}{N-1}$.}
\label{fig: 10 125}
\end{figure}

Clearly, we have two blocks as in Proposition \ref{prop: blocks} in different $\Delta$-gradings. 

\begin{center}
\begin{tikzpicture}
\draw (-2,2) node {$u$};
\draw (0,2) node {$Eu$};
\draw (2,2) node {$E^2u$};
\draw (-3,1) node {$y$};
\draw (-1,1) node {$(Ey,z)$};
\draw (1,1) node {$(E^2y,Ez)$};
\draw (3,1) node {$E^3y$};
\draw (-2,0) node {$w$};
\draw (0,0) node {$Ew$};
\draw (2,0) node {$E^2w$};
\draw [dotted] (4,-1)--(4,2);
\draw (5,1) node {$m$};
\draw [->](-1.8,2.2)..controls (0,4) and (4,3)..(4.8,1.2);
\draw (3,3) node {$d_2$};
\draw [->] (-2.2,1.8)--(-2.8,1.2);
\draw [->] (-1.8,1.8)--(-1.2,1.2);
\draw [->] (-2.8,0.8)--(-2.2,0.2);
\draw (-2.7,1.7) node {$d_{-1}$};
\draw (-1.3,1.7) node {$d_{1}$};
\draw (-2.7,0.3) node {$d_{1}$};
\draw (0,-1) node {$\Delta=-2$};
\draw (5,-1) node {$\Delta=0$};
\end{tikzpicture}
\end{center}

In $\Delta$-grading $(-2)$ the homology splits into irreducible $\sll(2)$ representations as follows: for $a=2$ we have an irreducible 3-dimensional representation generated by $u$; for $a=0$ we have a direct sum of a 4-dimensional representation generated by $y$ and a 2-dimensional representation generated by $z$; for $a=-2$ we have an irreducible 3-dimensional representation generated by $w$.

We can pin down the generators by requiring that $d_{-1}(u)=y$ and $d_{1}(y)=w$. Then $F(u)=0$, so
$$
F(d_1(u))=[F,d_1](u)=d_{-1}(u)=y,\ d_1^2(u)=0.
$$
On the other hand, $F(Ey)=[F,E]y=-Hy=3y,\ F(z)=0$, and $d_1(Ey)=Ed_1(y)=Ew$. Therefore one can uniquely scale $z$ such that
$$
d_1(u)=\frac{1}{3}Ey+z,\ d_1(z)=-\frac{1}{3}Ew. 
$$

Next, we determine the differential $d_2$. One can check that the total rank of reduced Khovanov homology $H_{\sll(2)}(10_{125})$ is smaller than the rank of $\HHH(10_{125})$, so $d_2$ must be nonzero and (by degree reasons) the only possibility  is $d_2(u)=m$. By Proposition \ref{prop: N large}   we have 
$$
d_N=0,\quad H_{\sll(N)}(10_{125})\simeq \HHH(10_{125})\quad \text{for}\ N\ge 3.
$$

To compute various $s$-invariants, we use Theorem \ref{thm: W s invariant}. Clearly, the homology of $d_1$ is generated by $m$ and $S=0$. To compute $s_2=s_{x^2-x}$, we use the potential $\partial W=x^2-x$ and the differential $d_{x^2-x}=d_2-d_1$. The corresponding spectral sequence has the first differential $d_2$ (which kills $u$ and $m$) and the second differential $-d_1$ with homology spanned by $\frac{1}{6}Ey+z$ supported in $q_{\sll(2)}=-2$. Therefore $s=j_2=-2$ and $s_2(10_{125})=\frac{j_2}{2}=-1$. 

For $N=3$, we have two different deformations of $\sll(3)$ homology.
For $\partial W=x^3-x$, we get the spectral sequence with the first differential $d_3=0$, and the next differential $d_1$ with homology spanned by $m$, so $s_{x^3-x}(10_{125})=0$ as in Theorem \ref{thm: intro sN stabilize}. On the other hand, for $\partial W=(x+1)^3-1=x^3+3x^2+3x$ we get the first differential $d_3=0$ while the second differential $3(d_2+d_1)$ has homology 
$$
\left\langle m,\frac{1}{3}Ey+z\right\rangle /\left(m+\frac{1}{3}Ey+z=0\right)
$$  
Note that $q_{\sll(3)}(m)=0$ while $q_{\sll(3)}\left(\frac{1}{3}Ey+z\right)=-2$. The homology generator $[m]$ has a representative with $q_{\sll(2)}\le -2$, so $H_{\partial W}=\mathcal{F}_{-2}H_{\partial W}$ in the notations of Remark \ref{rem: def j}. Therefore 
$j_{x^3-1}=j_{(x+1)^3-1}=-2$ and $s_{x^3-1}(10_{125})=\frac{-2}{4}=-\frac{1}{2}$.

A similar computation shows $s_{x^N-x}(10_{125})=0$ while $s_{x^N-1}(10_{125})=-\frac{1}{N-1}$ in agreement with \cite[Section 1.2]{LL}.

\subsection{A larger example}
\label{sec: example}

Let us analyze the behaviour of $\sll(N)$ differentials for the knot $11n_{135}$ shown in Figure \ref{homfly-vs-sl2}. We break the computation in several steps:

\begin{figure}[ht!]
    \begin{center}
        \includegraphics[width=0.7\textwidth]{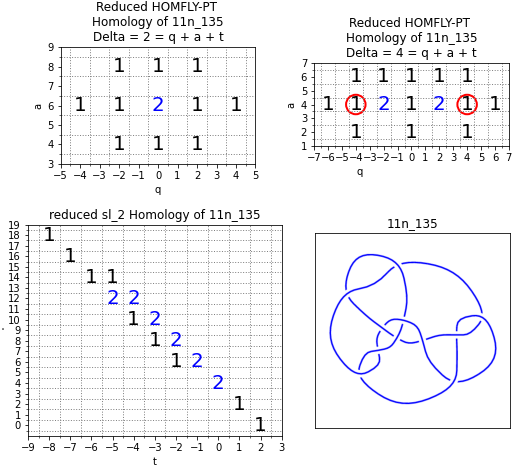}
    \end{center}
    \caption{Here we have both the triply graded homology and $\sll(2)$ homology of the knot $K=11n_{135}$.
    The $S$-invariant equals $-4$ while the Rasmussen $s$-invariant equals $4$.
    }
    \label{homfly-vs-sl2}
\end{figure}

{\bf Step 1:} $N=1$. By Lemma \ref{lem: d1 standard} the knot is $d_1$-standard, so the homology of $d_1$ is one-dimensional in $\Delta=4$ and hence are supported at bidegree $(q,a)=(-4,4)$ marked by the left red circle in Figure \ref{homfly-vs-sl2} and by $n$ in Figure \ref{Fig: 11n135 diff}. In particular, $S(K)=-4$. The differential $d_1$ is acyclic everywhere else, and it is easy to reconstruct it.

{\bf Step 2:} Reconstruct the action of $\sll(2)$: see Figure \ref{Fig: 11n135 diff}.

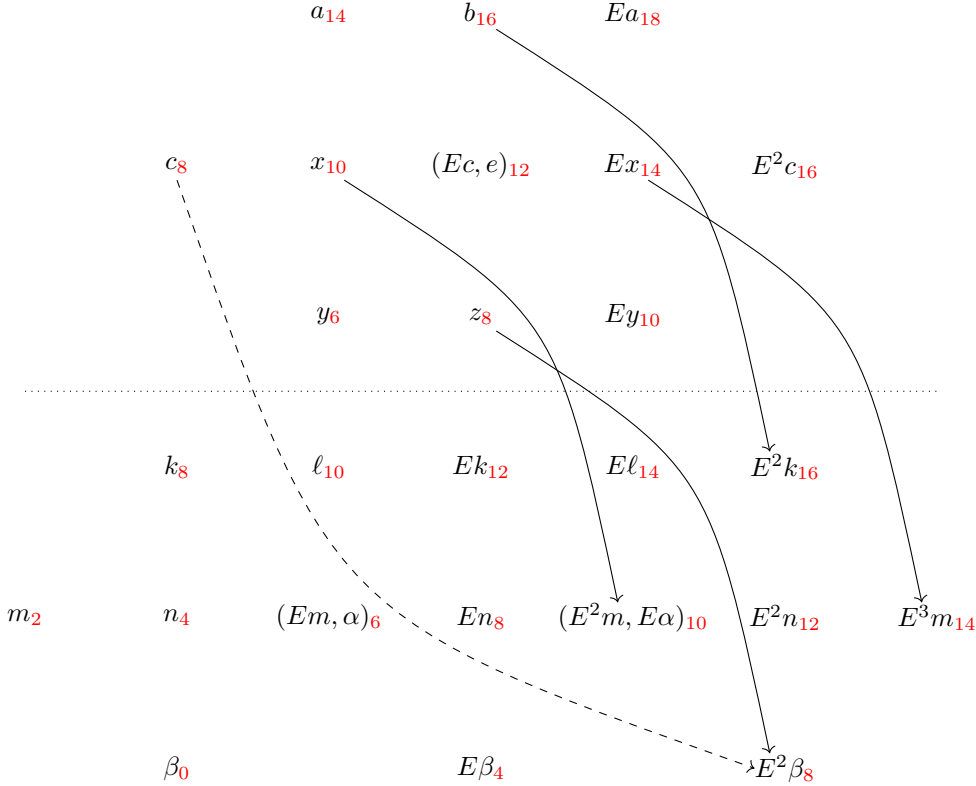
\begin{figure}
\begin{tikzpicture}
\draw (-2,4) node {$a_{\textcolor{red}{14}}$};
\draw (0,4) node {$b_{\textcolor{red}{16}}$};
\draw (2,4) node {$Ea_{\textcolor{red}{18}}$};
\draw (-4,2) node {$c_{\textcolor{red}{8}}$};
\draw (-2,2) node {$x_{\textcolor{red}{10}}$};
\draw (0,2) node {$(Ec,e)_{\textcolor{red}{12}}$};
\draw (2,2) node {$Ex_{\textcolor{red}{14}}$};
\draw (4,2) node {$E^2c_{\textcolor{red}{16}}$};
\draw (-2,0) node {$y_{\textcolor{red}{6}}$};
\draw (0,0) node {$z_{\textcolor{red}{8}}$};
\draw (2,0) node {$Ey_{\textcolor{red}{10}}$};
\draw [dotted] (-6,-1)--(6,-1);
\draw (-4,-2) node {$k_{\textcolor{red}{8}}$};
\draw (-2,-2) node {$\ell_{\textcolor{red}{10}}$};
\draw (0,-2) node {$Ek_{\textcolor{red}{12}}$};
\draw (2,-2) node {$E\ell_{\textcolor{red}{14}}$};
\draw (4,-2) node {$E^2k_{\textcolor{red}{16}}$};
\draw (-6,-4) node {$m_{\textcolor{red}{2}}$};
\draw (-4,-4) node {$n_{\textcolor{red}{4}}$};
\draw (-2,-4) node {$(Em,\alpha)_{\textcolor{red}{6}}$};
\draw (0,-4) node {$En_{\textcolor{red}{8}}$};
\draw (2,-4) node {$(E^2m,E\alpha)_{\textcolor{red}{10}}$};
\draw (4,-4) node {$E^2n_{\textcolor{red}{12}}$};
\draw (6,-4) node {$E^3m_{\textcolor{red}{14}}$};
\draw (-4,-6) node {$\beta_{\textcolor{red}{0}}$};
\draw (0,-6) node {$E\beta_{\textcolor{red}{4}}$};
\draw (4,-6) node {$E^2\beta_{\textcolor{red}{8}}$};

\draw [->] (0.2,3.8)..controls (3,2) and (3,2)..(3.8,-1.8);
\draw [->] (2.2,1.8)..controls (5,0) and (5,0)..(5.8,-3.8);
\draw [->] (-1.8,1.8)..controls (1,0) and (1,0)..(1.8,-3.8);
\draw [->] (0.2,-0.2)..controls (3,-2) and (3,-2)..(3.8,-5.8);

\draw [->,dashed] (-4,1.8)..controls (-2,-4) and (-2,-4)..(3.6,-6);
\end{tikzpicture}
\caption{Differentials in the $\sll(2)$ spectral sequence for $11n_{135}$. The generators in $\Delta$-grading 2 are above the line, and the generators in $\Delta$-grading 4 are below the line.}
\label{Fig: 11n135 diff}
\end{figure}

In particular, we see that $d_1(b)=Ex$ and $d_1(x)=z$ and, up to a choice of scalars, we get 
$$
d_1(m)=\beta,\ d_1(k)=Em+\alpha,\ d_1(\alpha)=-E\beta.
$$
 
{\bf Step 3:} $N=2$. Let us reconstruct the action of $d_2$. Recall that $q_{\sll(2)}=q+2a$, these degrees are marked in red in Figure \ref{Fig: 11n135 diff}, and $d_2$ preserves $q_{\sll(2)}$ while decreasing $a$-degree by 2 and increasing $\Delta$-grading by 2. Potentially, there are also higher differentials $d_2^{(i)}$ which  preserve $q_{\sll(2)}$ while decreasing $a$-degree by $2i$ and increasing $\Delta$-grading by 2.  

In degree $q_{\sll(2)}=16$ we have three generators in $\HHH(K)$ and only one generator in $H_{\sll(2)}(K)$.
Therefore there should be exactly one cancellation and by looking at $a$-degrees we conclude that there is no room for higher differentials and $d_2(b)=E^2k$ (up to a nonzero coefficient which we ignore).   

Now $[d_1,d_2]=0$, so 
$$
d_2(Ex)=d_2(d_1(b))=d_1(d_2(b))=d_1(E^2k)=E^3m,
$$ 
This is the only cancellation in degree 14, and this computation excludes another potential cancellation or higher differential. 
There are no cancellations in degree 12.
Since $[d_2,E]=0$, we get 
$$
E(d_2(x))=d_2(Fx)=E^3m,
$$
so $d_2(x)\neq 0$, this is the only cancellation in degree 10.

Finally, let us understand the cancellations in degree 8. If $d_2(c)\neq 0$ then $d_2(Ec)=E(d_2(c))\neq 0$, and there is a cancellation in degree 12. Contradiction, hence $d_2(c)=0$. We have two cases:
$$
\begin{cases}
d_2(z)=E^2\beta,\ d_2^{(2)}=0\\
d_2(z)=0,\ d_2^{(2)}=E^2\beta.
\end{cases}
$$
In the first case (shown in Figure \ref{Fig: 11n135 diff}) we have we have $d_1(d_2(x))\neq 0$ while in the second case (shown by the dashed arrow) we have $d_1(d_2(x))=0$, so $d_2(x)$ is proportional to $d_1(k)$. 

{\bf Step 4:} $N\ge 3$. By Proposition \ref{prop: N large} all differentials $d_N^{(i)}$ vanish for $N\ge 3$, and $H_{\sll(N)}(K)=\HHH(K)$ up to regrading.

{\bf Step 5:} In either case above, the generator $n$ survives in $\sll(2)$ spectral sequence, so $s_2=\frac{1}{2}(-4+2\cdot 4)=2$. Similarly, $s_{\partial W}=\frac{1}{2(N-1)}(-4+N\cdot 4)=2$ for all $N$ and arbitrary potential $\partial W$ of degree $N$. The Rasmussen $s$-invariant equals $s=2s_2=4$, in agreement with \cite{KnotInfo}.  

\subsection{Example: $T(4,5)$}
\label{sec: T45}

In this subsection we consider the 15-crossing torus knot $T(4,5)$ which is not covered by the data in \cite{NS}.  On the other hand, Hogancamp in \cite{Hog} proved a conjecture of the second author \cite{Gor} relating the triply graded homology of torus knots $T(n,n+1)$ to so-called $q,t$-Catalan numbers. 
In particular, $\HHH(T(4,5))$ agrees with its conjectural description in \cite[Section 3.4]{Gor} and is presented in Figure \ref{fig: T45}.
The total dimension of $\HHH(T(4,5))$ equals 45.

We can give a more precise description of the differentials on $\HHH(T(4,5))$ using the above results. First, $T(4,5)$ is parity and by Corollary \ref{cor: dN parity} $d_N^{(i)}$ vanish for $i$ even. It is easy to check that by degree reasons $d_N^{(i)}=0$ for $i\ge 3$, hence the Rasmussen spectral sequences from Proposition \ref{prop: differentials} collapse after the first differential $d_N$ for all $N$.

In particular, $T(4,5)$ is $d_1$-standard and $\HHH(T(4,5))$ decomposes into blocks as in Proposition \ref{prop: blocks}, as Figure \ref{fig: T45} clearly demonstrates. In fact, there is only one block in each $\Delta$-grading in this case. The action of $\sll(2)$ is clear from the blocks.
The homology of $d_1$ is one-dimensional, it is marked by the red circle.

The reduced $\sll(2)$ homology has rank 9 and is shown in yellow in Figure \ref{fig: T45} (a half-colored box corresponds to one-dimensional $H_{\sll(2)}$ and two-dimensional $\HHH$ in a given degree).  

The reduced $\sll(3)$ homology has rank 23 and is shown in white and yellow, in agreement with \cite{FoamHo}. The differential $d_3$ increases $\Delta$ by 4, and cancels the blue regions for $\Delta=6,10$ as well as the green regions for $\Delta=8,12$. 

Finally, we can study the operators $d_{a|b}$ from Lemma \ref{lem: super diffs}.  
Let $X$ denote the generator at the very top of Figure \ref{fig: T45}, with $(q,a,\Delta)=(0,18,6)$. Since $F(X)=0$, we observe that:
\begin{itemize}
\item There is a length 2 $\sll(2)$ chain containing $d_1(X)$ and $Fd_1(X)=d_{0|1}(X)=d_{-1}(X)$.
\item There is a length 3 $\sll(2)$ chain containing $d_2(X)$, $Fd_2(X)=d_{1|1}(X)$ and $\frac{1}{2}F^2d_2(X)=d_{0|2}(X)$.
\item There is a length 4 $\sll(2)$ chain containing $d_3(X)$, $Fd_3(X)=d_{2|1}(X)$, $\frac{1}{2}F^2d_3(X)=d_{1|2}(X)$ and
$\frac{1}{6}F^3d_3(X)=d_{0|3}(X)$.
\end{itemize}
In particular, all of the operators $d_{a|b}$ for $a,b\le 3$ are nontrivial on $\HHH(T(4,5))$. 

We expect that for a more general torus knot $T(n,m)$ the operators $d_{a|b}$ are nontrivial for $a,b\le \min(n,m)-1$ and plan to investigate them in more detail in a future work.

\begin{figure}

\begin{tikzpicture}[scale=0.5]
\filldraw[color=blue!20] (-1,11)--(-1,13)--(-3,13)--(-3,15)--(-1,15)--(-1,17)--(1,17)--(1,15)--(3,15)--(3,13)--(1,13)--(1,11)--(-1,11);

\filldraw[color=white] (-1,13)--(-1,15)--(1,15)--(1,13)--(-1,13);

\filldraw[color=blue!20] (5,-1)--(5,1)--(4,1)--(4,3)--(5,3)--(5,5)--(7,5)--(7,3)--(9,3)--(9,1)--(7,1)--(7,-1)--(5,-1);

\filldraw[color=white] (5,1)--(5,3)--(7,3)--(7,1)--(5,1);

\filldraw [color=green!20] (-7,7)--(-7,9)--(-5,9)--(-5,11)--(5,11)--(5,9)--(7,9)--(7,7)--(3,7)--(3,8)--(-3,8)--(-3,7)--(-7,7);

\filldraw[color=white] (-5,7)--(-5,9)--(-3,9)--(-3,7)--(-5,7);

\filldraw[color=white] (-3,9)--(-3,11)--(-1,11)--(-1,9)--(-3,9);

\filldraw[color=white] (-1,7)--(-1,9)--(1,9)--(1,7)--(-1,7);

\filldraw[color=white] (1,9)--(1,11)--(3,11)--(3,9)--(1,9);

\filldraw[color=white] (3,7)--(3,9)--(5,9)--(5,7)--(3,7);

\filldraw [color=green!20] (-1,-5)--(-1,-3)--(1,-3)--(1,-1)--(11,-1)--(11,-3)--(13,-3)--(13,-5)--(-7,-5);

\filldraw[color=white] (1,-5)--(1,-3)--(3,-3)--(3,-5)--(1,-5);

\filldraw[color=white] (3,-3)--(3,-1)--(5,-1)--(5,-3)--(3,-3);

\filldraw[color=white] (5,-5)--(5,-3)--(7,-3)--(7,-5)--(5,-5);

\filldraw[color=white] (7,-3)--(7,-1)--(9,-1)--(9,-3)--(7,-3);

\filldraw[color=white] (9,-5)--(9,-3)--(11,-3)--(11,-5)--(9,-5);

\filldraw [color=yellow!20] (-13,-5)--(-13,-3)--(-11,-3)--(-11,-1)--(-5,-1)--(-5,-3)--(-7,-3)--(-7,-5)--(-13,-5);

\filldraw[color=white] (-11,-5)--(-11,-3)--(-9,-3)--(-9,-5)--(-11,-5);

\filldraw[color=white] (-9,-3)--(-9,-1)--(-7,-1)--(-7,-3)--(-9,-3);

\filldraw [color=yellow!20] (-7,-1)--(-7,1)--(-5,1)--(-5,-1)--(-7,-1);

\filldraw [color=yellow!20] (-5,1)--(-5,2)--(-3,2)--(-3,1)--(-5,1);

\filldraw [color=yellow!20] (-5,5)--(-5,7)--(-3,7)--(-3,8)--(-1,8)--(-1,7)--(1,7)--(1,5)--(-5,5);

\filldraw[color=white] (-3,5)--(-3,7)--(-1,7)--(-1,5)--(-3,5);

\draw (0,16) node {1};
\draw (-2,14) node {1};
\draw (2,14) node {1};
\draw (0,12) node {1};
\draw [line width=2] (-14,11)--(14,11);
\draw (-4,10) node {1};
\draw (0,10) node {1};
\draw (4,10) node {1};
\draw (-6,8) node {1};
\draw (-2,8) node {2};
\draw (2,8) node {2};
\draw (6,8) node {1};
\draw (-4,6) node {1};
\draw (0,6) node {1};
\draw (4,6) node {1};
\draw [line width=2] (-14,5)--(14,5);
\draw (-6,4) node {1};
\draw (-2,4) node {1};
\draw (2,4) node {1};
\draw (6,4) node {1};
\draw (-8,2) node {1};
\draw (-4,2) node {2};
\draw (0,2) node {2};
\draw (4,2) node {2};
\draw (8,2) node {1};
\draw (-6,0) node {1};
\draw (-2,0) node {1};
\draw (2,0) node {1};
\draw (6,0) node {1};
\draw [line width=2] (-14,-1)--(14,-1);
\draw (-10,-2) node {1};
\draw (-6,-2) node {1};
\draw (-2,-2) node {1};
\draw (2,-2) node {1};
\draw (6,-2) node {1};
\draw (10,-2) node {1};
\draw (-12,-4) node {1};
\draw (-8,-4) node {1};
\draw (-4,-4) node {1};
\draw (0,-4) node {1};
\draw (4,-4) node {1};
\draw (8,-4) node {1};
\draw (12,-4) node {1};

\draw (-13,-5)--(-13,17);
\draw (-11,-5)--(-11,17);
\draw (-9,-5)--(-9,17);
\draw (-7,-5)--(-7,17);
\draw (-5,-5)--(-5,17);
\draw (-3,-5)--(-3,17);
\draw (-1,-5)--(-1,17);
\draw (1,-5)--(1,17);
\draw (3,-5)--(3,17);
\draw (5,-5)--(5,17);
\draw (7,-5)--(7,17);
\draw (9,-5)--(9,17);
\draw (11,-5)--(11,17);
\draw (13,-5)--(13,17);

\draw (-13,-5)--(13,-5);
\draw (-13,-3)--(13,-3);
\draw (-13,-5)--(13,-5);
\draw (-13,1)--(13,1);
\draw (-13,3)--(13,3);
\draw (-13,7)--(13,7);
\draw (-13,9)--(13,9);
\draw (-13,13)--(13,13);
\draw (-13,15)--(13,15);
\draw (-13,17)--(13,17);

\draw [->] (13,-5)--(14,-5);
\draw [->] (-13,17)--(-13,18);
\draw (14,-5.5) node {$q$};
\draw (-13.5,17) node {$a$};

\draw (-12,-5.5) node {-12};
\draw (-10,-5.5) node {-10};
\draw (-8,-5.5) node {-8};
\draw (-6,-5.5) node {-6};
\draw (-4,-5.5) node {-4};
\draw (-2,-5.5) node {-2};
\draw (0,-5.5) node {0};
\draw (2,-5.5) node {2};
\draw (4,-5.5) node {4};
\draw (6,-5.5) node {6};
\draw (8,-5.5) node {8};
\draw (10,-5.5) node {10};
\draw (12,-5.5) node {12};

\draw (-13.5,-4) node {12};
\draw (-13.5,-2) node {14};
\draw (-13.5,0) node {12};
\draw (-13.5,2) node {14};
\draw (-13.5,4) node {16};
\draw (-13.5,6) node {12};
\draw (-13.5,8) node {14};
\draw (-13.5,10) node {16};
\draw (-13.5,12) node {14};
\draw (-13.5,14) node {16};
\draw (-13.5,16) node {18};

\draw (-16,-3) node {$\Delta=12$};
\draw (-16,2) node {$\Delta=10$};
\draw (-16,8) node {$\Delta=8$};
\draw (-16,14) node {$\Delta=6$};

\draw [color=red,line width=2] (-12,-4) circle (1);

\draw [->,dotted] (1.5,17.5)--(0.5,16.5);
\draw (2,18) node {$X$};
\draw [->,dotted] (3.5,15.5)--(2.5,14.5);
\draw (4,16) node {$d_1(X)$};
\draw [->,dotted] (5.5,11.5)--(4.5,10.5);
\draw (6,12) node {$d_2(X)$};
\draw [->,dotted] (7.5,5.5)--(6.5,4.5);
\draw (8,6) node {$d_3(X)$};

\end{tikzpicture}
\caption{The reduced $\mathfrak{sl}(2)$ homology  (yellow) and the reduced $\mathfrak{sl}(3)$ homology  (yellow and white) of $T(4,5)$. These have total ranks 9 and 23 respectively. The differential $d_3$ cancels blue and green regions in pairs.}
\label{fig: T45}
\end{figure}




\section{Appendix: Potentially $d_1$-nonstandard knots}
\label{sec: appendix}

The figures below illustrate the exceptional cases in the proof of Proposition \ref{prop: d1 exceptions}. 
The knots $10_{136}$, $11n_{12}$, $11n_{20}$, and $11n_{79}$ have no room for the differential $d_1^{(2)}$ because there are no nonzero homology groups in the necessary gradings (see figures \ref{10_136-fig}, \ref{11n_12-fig},\ref{11n_20-fig}, and \ref{11n_79-fig}). 
For the remaining exceptional cases, we indicate the location of the potential higher differential $d_1^{(2)}$ in the $\sll(1)$ spectral sequence and conclude that it vanishes, otherwise the dimension of the $E_{\infty}$ page is at least 2. 

\begin{figure}[ht!]
\includegraphics[width=\textwidth]{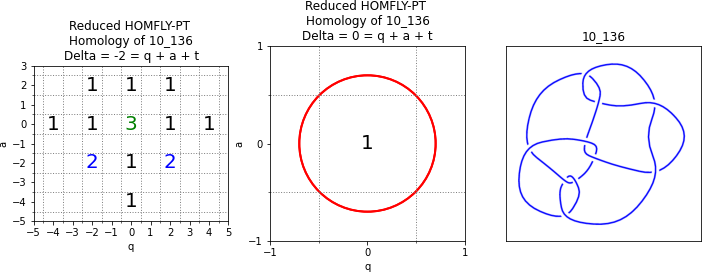}
\caption{$10_{136}$: For there to be a nonzero $d_1^{(2)}$ there would have to be nonzero homology group in $(q,a,\Delta) = (4,-4,-2)$.}
\label{10_136-fig}
\end{figure}

\begin{figure}[ht!]
\includegraphics[width=\textwidth]{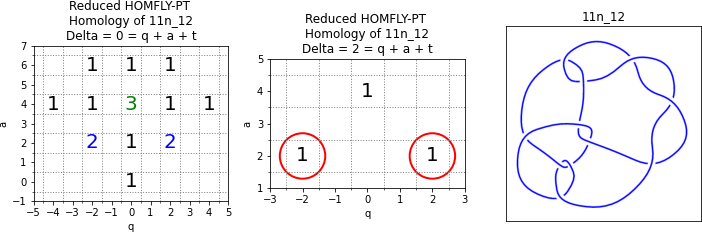}
\caption{$11n_{12}$: For there to be a nonzero $d_1^{(2)}$ there would have to be nonzero homology group in $(q,a,\Delta) = (4,0,0), (2,-2,0)$ or $(6,-2,0)$.}
\label{11n_12-fig}
\end{figure}

\begin{figure}[ht!]
\includegraphics[width=\textwidth]{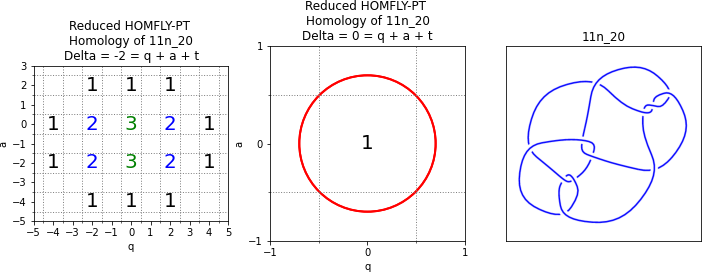}
\caption{$11n_{20}$: For there to be a nonzero $d_1^{(2)}$ there would have to be nonzero homology group in $(q,a,\Delta) = (4,-4,-2)$.}
\label{11n_20-fig}
\end{figure}

\begin{figure}[ht!]
\includegraphics[width=\textwidth]{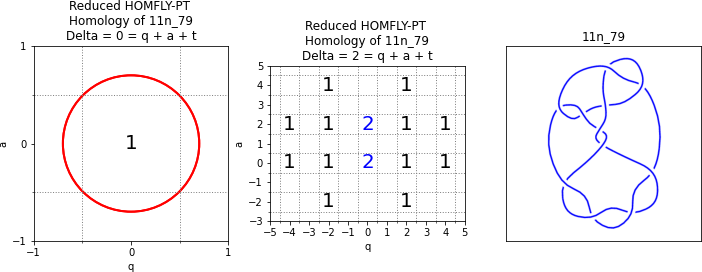}
\caption{$11n_{79}$: For there to be a nonzero $d_1^{(2)}$ there would have to be nonzero homology group in $(q,a,\Delta) = (-4,4,2)$.}
\label{11n_79-fig}
\end{figure}

\begin{figure}[ht!]
\includegraphics[width=\textwidth]{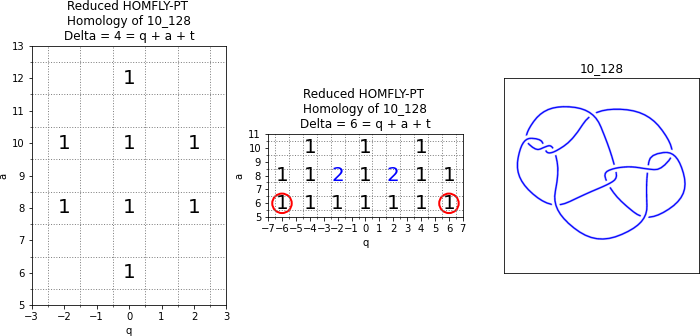}
\caption{$10_{128}$: There is a potential differential $d_1^{(2)}$ from $(q,a)=(-4,10)$ to $(0,6)$ which must vanish. The $S$-invariant equals $-6$.}
\end{figure}

\begin{figure}[ht!]
\includegraphics[width=\textwidth]{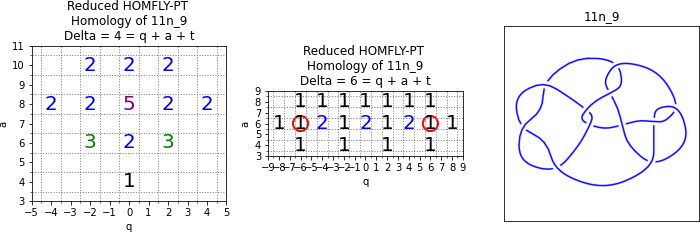}
\caption{$11n_{9}$: There is a potential differential $d_1^{(2)}$ from $(q,a)=(-4,8)$ to $(0,4)$ which must vanish. The $S$-invariant equals $-6$.}
\end{figure}

\begin{figure}
\includegraphics[width=\textwidth]{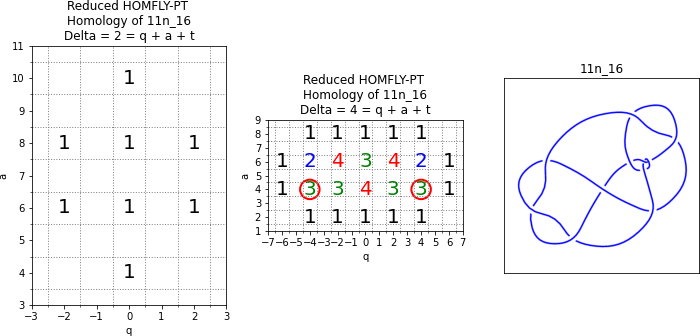}
\caption{$11n_{16}$: There is a potential differential $d_1^{(2)}$ from $(q,a)=(-4,8)$ to $(0,4)$ which must vanish. The $S$-invariant equals $-4$.}
\end{figure}

\begin{figure}
\includegraphics[width=\textwidth]{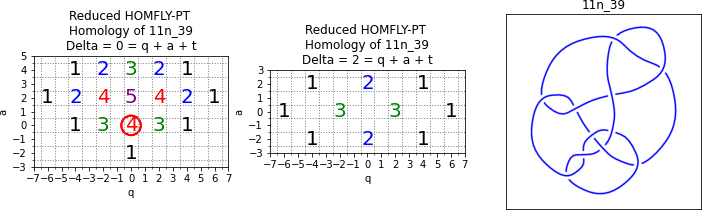}
\caption{$11n_{39}$: There is a potential differential $d_1^{(2)}$ from $(q,a)=(-4,2)$ to $(0,-2)$ which must vanish. The $S$-invariant equals $0$.}
\end{figure}

\begin{figure}
\includegraphics[width=\textwidth]{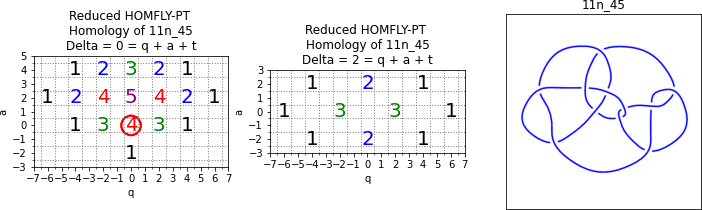}
\caption{$11n_{45}$: There is a potential differential $d_1^{(2)}$ from $(q,a)=(-4,2)$ to $(0,-2)$ which must vanish. The $S$-invariant equals $0$.}
\end{figure}

\begin{figure}
\includegraphics[width=\textwidth]{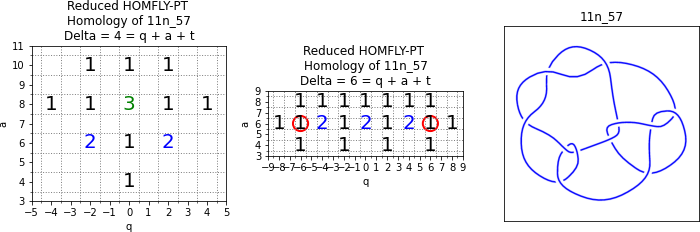}
\caption{$11n_{57}$: There is a potential differential $d_1^{(2)}$ from $(q,a)=(-4,8)$ to $(0,4)$ which must vanish. The $S$-invariant equals $-6$.}
\end{figure}

\begin{figure}
\includegraphics[width=\textwidth]{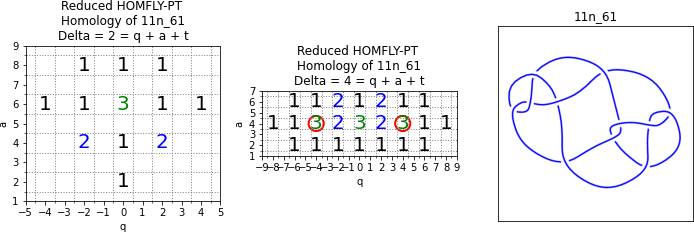}
\caption{$11n_{61}$: There is a potential differential $d_1^{(2)}$ from $(q,a)=(-4,6)$ to $(0,2)$ which must vanish. The $S$-invariant equals $-4$.}
\end{figure}

\begin{figure}
\includegraphics[width=\textwidth]{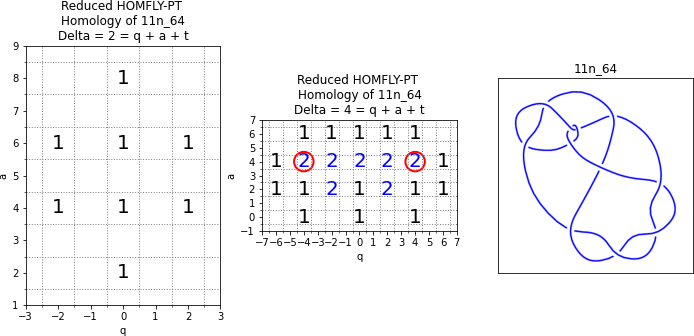}
\caption{$11n_{64}$: There is a potential differential $d_1^{(2)}$ from $(q,a)=(-4,6)$ to $(0,2)$ which must vanish. The $S$-invariant equals $-4$.}
\end{figure}

\begin{figure}
\includegraphics[width=\textwidth]{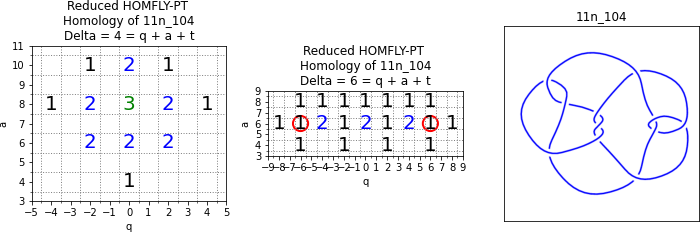}
\caption{$11n_{104}$: There is a potential differential $d_1^{(2)}$ from $(q,a)=(-4,8)$ to $(0,4)$ which must vanish. The $S$-invariant equals $-6$.}
\end{figure}

\begin{figure}
\includegraphics[width=\textwidth]{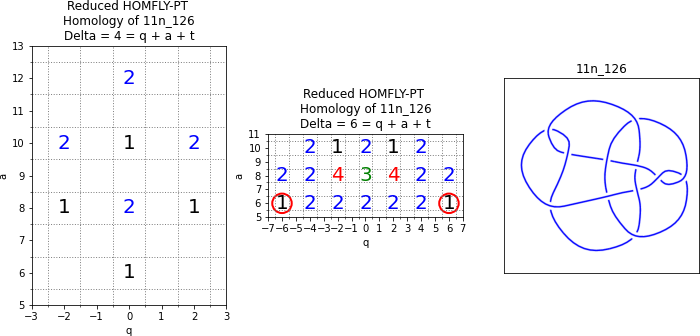}
\caption{$11n_{126}$: There is a potential differential $d_1^{(2)}$ from $(q,a)=(-4,10)$ to $(0,6)$ which must vanish. The $S$-invariant equals $-6$.}
\end{figure}

\begin{figure}
\includegraphics[width=\textwidth]{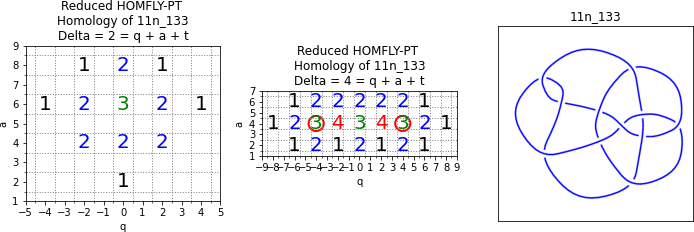}
\caption{$11n_{133}$: There is a potential differential $d_1^{(2)}$ from $(q,a)=(-4,6)$ to $(0,2)$ which must vanish. The $S$-invariant equals $-4$.}
\end{figure}

\begin{figure}
\includegraphics[width=\textwidth]{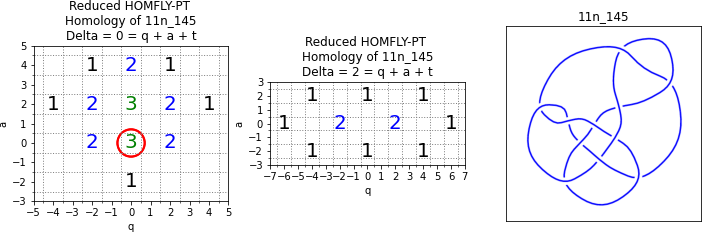}
\caption{$11n_{145}$: There is a potential differential $d_1^{(2)}$ from $(q,a)=(-4,2)$ to $(0,-2)$ which must vanish. The $S$-invariant equals $0$.}
\end{figure}

\begin{figure}
\includegraphics[width=\textwidth]{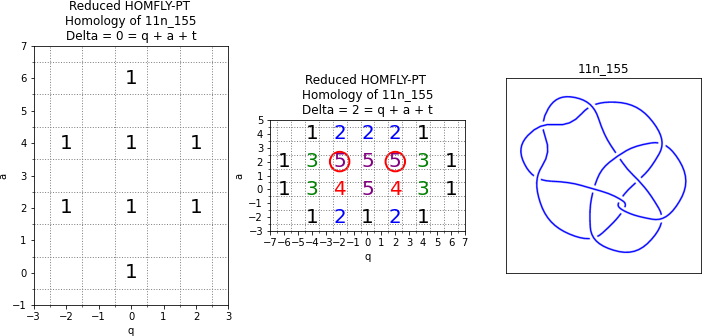}
\caption{$11n_{155}$: There is a potential differential $d_1^{(2)}$ from $(q,a)=(-4,4)$ to $(0,0)$ which must vanish. The $S$-invariant equals $-2$.}
\end{figure}


\begin{thebibliography}{99}

\bibitem{BS} J. Batson, C. Seed. A link-splitting spectral sequence in Khovanov homology. Duke Math. J. 164 (2015), no. 5, 801--841.

\bibitem{CK} S. Cautis, J. Kamnitzer. Knot homology via derived categories of coherent sheaves IV, coloured links. 
Quantum Topol. 8 (2017), no. 2, 381--411.

\bibitem{DL} O. Dasbach, A. Lowrance. Turaev genus, knot signature, and the knot homology concordance invariants. Proc. Amer. Math. Soc. 139 (2011), no. 7, 2631--2645.

\bibitem{DGR}  N. Dunfield, S. Gukov, and J. Rasmussen. The superpolynomial
for knot homologies. Experiment. Math. 15.2 (2006), pp. 129--159.

\bibitem{EH} B. Elias, M. Hogancamp. On the computation of torus link homology. Compos. Math. 155 (2019), no. 1, 164--205. 

\bibitem{Gor} E. Gorsky. $q,t$-Catalan numbers and knot homology. Zeta functions in algebra and geometry, 213–232, Contemp. Math., 566, Amer. Math. Soc., Providence, RI, 2012.

\bibitem{GGS} E. Gorsky, S. Gukov, M. Sto\v{s}i\'c. Quadruply-graded colored homology of knots. Fund. Math. 243 (2018), no. 3, 209--299.

\bibitem{GH} E. Gorsky, M. Hogancamp. Hilbert schemes and $y$--ification of Khovanov–Rozansky homology. Geom. Topol. 26 (2022), no. 2, 587--678.

\bibitem{GHM} E. Gorsky, M. Hogancamp, A. Mellit. Tautological classes and symmetry in Khovanov-Rozansky homology. arXiv:2103.01212

\bibitem{GNR} E. Gorsky, A. Negu\cb{t}, J. Rasmussen. Flag Hilbert schemes, colored projectors and Khovanov-Rozansky homology. Adv. Math. 378 (2021), Paper No. 107542, 115 pp.


\bibitem{Hog} M. Hogancamp. Khovanov-Rozansky homology and higher Catalan sequences. arXiv:1704.01562

\bibitem{HM} M. Hogancamp, A. Mellit. Torus link homology. arXiv:1909.00418

\bibitem{KhSoergel} M. Khovanov. Triply-graded link homology and Hochschild homology of Soergel bimodules. Internat. J. Math. 18 (2007), no. 8, 869--885.

\bibitem{KR1} M. Khovanov, L. Rozansky. Matrix factorizations and link homology. Fund. Math. 199 (2008), no. 1, 1--91.

\bibitem{KR2} M. Khovanov, L. Rozansky. Matrix factorizations and link homology. II. Geom. Topol. 12 (2008), no. 3, 1387--1425.

\bibitem{KnotInfo}  C. Livingston and A. H. Moore, KnotInfo: Table of Knot Invariants, \url{https://knotinfo.math.indiana.edu}

\bibitem{LewarkSl3} L. Lewark. $\sll(3)$-foam homology calculations.   Algebr. Geom. Topol. 13 (2013), no. 6, 3661-–3686.

\bibitem{FoamHo} L. Lewark. FoamHo, an $\sll(3)$-homology calculator. \url{http://www.lewark.de/lukas/foamho.html}

\bibitem{KhoCa} L. Lewark. Khoca, a knot homology calculator. 
\url{http://www.lewark.de/lukas/khoca.html}

\bibitem{LL}  L. Lewark,  and A. Lobb. New quantum obstructions to sliceness. Proceedings of the London Mathematical Society. Third Series. Vol 112, 2016. pp 81-114. 

\bibitem{Lobb1} A. Lobb. A slice genus lower bound from $\sll(n)$ Khovanov-Rozansky homology. Adv. Math. 222 (2009), no. 4, 1220--1276.

\bibitem{Lobb2} A. Lobb. A note on Gornik's perturbation of Khovanov-Rozansky homology. Algebr. Geom. Topol. 12 (2012), no. 1, 293--305.

\bibitem{Mellit} A. Mellit. Homology of torus knots. Geom. Topol. 26 (2022), no. 1, 47--70.

\bibitem{NS} K. Nakagane, T. Sano. Computations of HOMFLY homology. arXiv:2111.00388; \url{https://github.com/taketo1024/kr-calc}

\bibitem{RasTwoBridge}  J. Rasmussen. Khovanov-Rozansky homology of two-bridge knots and links. Duke Math. J. 136.3 (2007), pp. 551-583.

\bibitem{RasDiff} J. Rasmussen. Some differentials on Khovanov-Rozansky homology. Geom. Topol. 19.6 (2015), pp. 3031-3104.

\bibitem{RasSlice} J. Rasmussen. Khovanov homology and the slice genus.  
Invent. Math. 182 (2010), no. 2, 419--447.


\bibitem{Rouquier} R. Rouquier. Categorification of the braid groups. arXiv:math/0409593

\bibitem{Wu} H. Wu. On the quantum filtration of the Khovanov-Rozansky cohomology. Adv. Math. 221 (2009), no. 1, 54--139.


\end{thebibliography}
\end{document}